\newtheorem{theorem}{Theorem}
\newtheorem{remark}[theorem]{Remark}
\newtheorem{lemma}[theorem]{Lemma}
\newtheorem{prop}[theorem]{Proposition}
\newtheorem{notation}[theorem]{Notation}
\numberwithin{equation}{section}
\numberwithin{theorem}{section}
\DeclareMathOperator{\arccosh}{arccosh}
\email{jjc3jr@virginia.edu}
\thanks{\footnotemark {$^*$} Both authors were supported in part by NSF DMS 1255574.}
\email{melcher@virginia.edu}
\keywords{Hypoelliptic, Heat kernel, Hermite functions}
\begin{document}

\title[Hermite functions on odd-dimensional spheres]{Small-time asymptotics for subelliptic Hermite functions on $SU(2)$ and the CR sphere}
\author[Campbell]{Joshua Campbell{$^*$}}
\address[Joshua Campbell]{Department of Mathematics\\
University of Virginia\\
Charlottesville, Virginia 22904 USA}
\author[Melcher]{Tai Melcher{$^*$}}
\address[Tai Melcher]{Department of Mathematics\\
University of Virginia\\
Charlottesville, Virginia 22904 USA}

\begin{abstract}
	We show that, under a natural scaling, the small-time behavior of the logarithmic derivatives of the subelliptic heat kernel on $SU(2)$ converges to their analogues on the Heisenberg group at time 1. Realizing $SU(2)$ as $\mathbb{S}^3$, we then generalize these results to higher-order odd-dimensional spheres equipped with their natural subRiemannian structure, where the limiting spaces are now the higher-dimensional Heisenberg groups.
\end{abstract}

\maketitle
\tableofcontents

\section{Introduction}

Heat kernels are a classical object of study and are known to have deep relations to the topological and geometric properties of the space on which they live. In particular, small-time asymptotics of the heat kernel on a Riemannian manifold can reveal various geometric data about the underlying space. The logarithmic derivatives of the heat kernel, or rather, the Hermite functions, generalize the Hermite polynomials on $\mathbb{R}^d$. Of course, the Hermite polynomials play a key role in the study of the heat kernel on $\mathbb{R}^d$, but also show up in many other parts of analysis; thus Hermite functions are a natural object of interest.

In \cite{Mitchell1999} Mitchell studied the small-time behavior of Hermite functions on compact Lie groups. In particular, he showed that, when written in exponential coordinates with a natural re-scaling, these functions converge to the classical Euclidean Hermite polynomials. Later in \cite{Mitchell2001}, Mitchell showed that  Hermite functions on compact Riemannian manifolds, again written in exponential coordinates with appropriate re-scaling, admit asymptotic expansions in powers of $\sqrt{t}$, with a classical Hermite polynomial as the leading coefficient (and the other coefficients are other polynomials). The present paper is concerned  with heat kernels related to the natural subRiemannian structure on $SU(2)\cong\mathbb{S}^3$ and, more generally, on the CR sphere $\mathbb{S}^{2d+1}$. The aim is to show that a statement analogous to \cite{Mitchell1999} holds for the Hermite functions on the CR sphere,  where the limiting objects are now the subRiemannian Hermite functions of the Heisenberg group. 

The results of \cite{Mitchell1999,Mitchell2001} may be interpreted as a strong quantification of how compact Riemannian manifolds are locally Euclidean, and the present paper may be viewed as an extension of those results in a particular subRiemannian setting. In general, the tangent cone approximation of a subRiemannian geometry by the appropriate stratified group (its nilpotentization) is much weaker than the tangent space approximation of a Riemannian manifold by its Euclidean tangent space. Thus such extensions should not be taken for granted.

Viewed from another perspective, there has been recent growing interest in various functional inequalities in the context of CR geometry, motivated in part by the isoperimetric inequality and a large class of spectral problems, all of which are deeply tied to the geometry of the underlying space. Recent work has exploited the relationship between the CR sphere and the Heisenberg group, either directly via the Cayley transform equivalence or more generally thinking of $\mathbb{S}^{2d+1}$ as a ``Heisenberg manifold'', to give a variety of results on the CR sphere. For example, there has been remarkable recent success in computing optimal Sobolev-type inequalities \cite{BFM2013,FrankLieb2012}. This relationship has also been used to explore spectral multipliers \cite{CKS2011} and determination of the eigenvalues \cite{2013Aribi} of the sub-Laplacian and variations on the Webster scalar curvature problem \cite{2002Malchiodi,2013Cao} on the CR sphere. Given the potential geometric information that small-time asymptotics yield, and the relative ease of analysis in the stratified Lie group setting, results like those in the present paper are of significant interest.

\subsection{Statement of Results}
\label{s.statements}

We begin our study by focusing on the special case of $SU(2)$, which is a (real) compact Lie group. This case most closely parallels the results appearing in \cite{Mitchell1999}, but even as the simplest case we consider, still demonstrates all the necessary elements of the proof. In fact, it turns out that, given the symmetries of the heat kernel for higher-dimensional spheres, the analysis of the $SU(2)\cong\mathbb{S}^3$ kernel is mostly sufficient to complete the proof for the more general case on $\mathbb{S}^{2d+1}$.

\subsubsection{The Lie group case}
\label{s.su(2)}

\begin{notation}
	Given a Lie group $G$, for $g\in G$ let $\ell_g$ denote left translation by $g$.  For any element $\xi$ of the Lie algebra $\mathfrak{g}=\mathrm{Lie}(G)$, let $\tilde{\xi}$ denote the associated left-invariant vector field; that is, $\tilde\xi(g) = \ell_{g*}\xi$ for all $g\in G$ and $\tilde{\xi}$ acts on smooth functions $f\in C^\infty(G)$ by 
\[ \tilde{\xi}(g)f = (\tilde{\xi} f)(g) = \frac{d}{d\varepsilon}\bigg|_0 f(ge^{\varepsilon\xi}). \]
For any $\xi=\xi_1\otimes\cdots\otimes\xi_k\in\mathfrak{g}^{\otimes k}$, let $\tilde{\xi}$ denote the $k^\mathrm{th}$-order left-invariant differential operator $\tilde{\xi}_1\cdots\tilde{\xi}_k$. 
\end{notation}

As usual, $SU(2)$ denotes the $2\times2$ complex unitary matrices of determinant 1
\begin{equation}
\label{e.su2-1}
\begin{split} 
SU(2) &= \{ A\in M_2(\mathbb{C}) : A^*A = I \text{ and }  \det A=1 \}\\
	&= \Bigg\{ \begin{pmatrix} 
		a & b\\
		-\bar{b} & \bar{a} \end{pmatrix}: a,b\in\mathbb{C}^2, |a|^2+|b|^2=1 \Bigg\} \end{split}
\end{equation}
which has 
Lie algebra  $\mathfrak{su}(2)$, the $2\times2$ complex skew-adjoint trace-free matrices. The Pauli matrices
\[ X = \begin{pmatrix}
0 & 1 \\
-1 & 0 \\
 \end{pmatrix} , 
Y = \begin{pmatrix}
0 & i \\
i & 0 \\
 \end{pmatrix}, \text{ and } 
Z = \begin{pmatrix}
i & 0 \\
0 & -i \\
 \end{pmatrix} \]
form a basis of $\mathfrak{su}(2)$ and satisfy the commutation relations
\begin{equation}
\label{e.su2} 
[X,Y]=2Z, [Y,Z]=2X, \text{ and } [Z,X]=2Y. 
\end{equation}
We will use the cylindrical coordinates for $SU(2)$ introduced in \cite{CowlingSikora2001},
\[ (r,\theta, z) \mapsto \exp(r\cos\theta X+r\sin\theta Y) \exp(zZ) = \begin{pmatrix}
 e^{i z}\cos r &  e^{i(\theta - z)}\sin r \\
- e^{-i(\theta - z)}\sin r &  e^{-i z}\cos r \\
 \end{pmatrix} \]
for $r \in [0,\frac{\pi}{2})$, $\theta \in [0, 2\pi]$, and $z \in [-\pi,\pi]$.
Since (\ref{e.su2}) implies that $\{X,Y\}$ generates the span of $\mathfrak{su}(2)$, we say that $\{X,Y\}$ satisfies {\it H\"ormander's condition} on $\mathfrak{su}(2)$, and thus the operator $L = \tilde{X}^2 + \tilde{Y}^2$ is subelliptic. We let $p:(0,\infty)\times SU(2)\times SU(2)\rightarrow\mathbb{R}$ denote the fundamental solution of the heat equation $\partial_t p_t(g,g') = L_g p_t(g,g')$ and $\lim_{t\downarrow 0} p_t(g,g')= \delta_{g'}(g)$. As usual, we let $p_t(g):=p_t(e,g)$ where $e$ is the identity element of $SU(2)$ (that is, the $2\times2$ identity matrix).

The Heisenberg group $\mathbb{H}$ also plays an important role in the sequel. Recall that $\mathbb{H}$ may be realized as $\mathbb{R}^3$ equipped with the group operation
\[ (x,y,z)\cdot(x',y',z') = (x+x',y+y',z+z'+ xy'-yx'); \]
the Lie algebra $\mathfrak{h}$ has basis $\{\mathcal{X},\mathcal{Y},\mathcal{Z}\}$ satisfying the bracket relations
\[ [\mathcal{X},\mathcal{Y}]=2\mathcal{Z} \text{ and } [\mathcal{X}, \mathcal{Z}] = [\mathcal{Y},\mathcal{Z}] = 0. \]
As above, $\{\mathcal{X},\mathcal{Y}\}$ forms a H\"ormander set, and thus the operator $\mathcal{L} = \tilde{\mathcal{X}}^2 + \tilde{\mathcal{Y}}^2$ is subelliptic. We let $h:(0,\infty)\times \mathbb{H}\times \mathbb{H}\rightarrow \mathbb{R}$ denote the fundamental solution of the heat equation for $\mathcal{L}$, and again we take $h_t(g)=h_t(e,g)$ where the identity on the Heisenberg group is 0.

That $L$ and $\mathcal{L}$ are subelliptic implies that $p$ and $h$ are smooth. In particular, for any $k\in\mathbb{N}$ and $\xi=\xi_1\otimes\cdots\otimes\xi_k\in\mathfrak{su}(2)^{\otimes k}$, we can define the Hermite function corresponding to $\xi$ as
\[ K_\xi(t,g) := \frac{(\tilde{\xi} p_t)(g)}{p_t(g)}.\] 
For $\xi=\xi_1\otimes\cdots\otimes\xi_k\in\{X,Y,Z\}^{\otimes k}$  we define 
\[ |\xi|  := \#\{i: \xi_i=X \text{ or } Y\} + 2\#\{i: \xi_i=Z\}. \]
Similarly, for any $\eta=\eta_1\otimes\cdots\otimes\eta_k\in\mathfrak{h}^{\otimes k}$, let the Hermite function corresponding to $\eta$ be
\[ H_\eta(t,g) := \frac{(\tilde{\eta} h_t)(g)}{h_t(g)}.\] 
Finally, we define a mapping $\beta$ from the basis elements of $\mathfrak{su}(2)$ to those of $\mathfrak{h}$ by $\beta:X\mapsto\mathcal{X},Y\mapsto\mathcal{Y},Z\mapsto\mathcal{Z}$ and extend it to $\{X,Y,Z\}^{\otimes k}$ by
\[ \{X,Y,Z\}^{\otimes k}\ni \xi=\xi_1\otimes\cdots\otimes\xi_k
	\mapsto \beta(\xi_1)\otimes\cdots\otimes\beta(\xi_k)=:\beta(\xi). \]

\begin{theorem}
\label{t.main}
Let $\xi=\xi_1\otimes\cdots\otimes\xi_k\in\mathfrak{su}(2)^{\otimes k}$ with $\xi_i\in\{X,Y,Z\}$. Then 
\[ \lim_{t\downarrow0} t^{|\xi|/2} K_\xi(t,(\sqrt{t}r,\theta,tz)) = H_{\beta(\xi)}(1,(r,\theta,z)) \]
uniformly on compact subsets of $[0,\infty)\times[0,2\pi)\times\mathbb{R}$.
\end{theorem}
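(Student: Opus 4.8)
The plan is to prove that, after parabolic rescaling, the numerator and denominator of $t^{|\xi|/2}K_\xi$ converge separately to the corresponding objects built from the Heisenberg heat kernel $h_1$, and then to take the quotient. Introduce the rescaling operator $(S_tf)(r,\theta,z) := f(\sqrt t\,r,\theta,tz)$, so that the quantity to be analyzed is $t^{|\xi|/2}S_tK_\xi(t,\cdot) = t^{|\xi|/2}\,(S_t\tilde\xi p_t)/(S_tp_t)$. Writing $Q=4$ for the homogeneous dimension of $\mathbb H$, I would aim to establish the two limits
\[ t^{Q/2}\,S_tp_t \longrightarrow h_1 \qquad\text{and}\qquad t^{(|\xi|+Q)/2}\,S_t(\tilde\xi p_t)\longrightarrow \tilde{\beta(\xi)}h_1, \]
both uniformly on compact subsets. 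Since the subelliptic heat kernel $h$ is strictly positive, $h_1$ never vanishes, so dividing the second limit by the first yields $t^{|\xi|/2}S_tK_\xi(t,\cdot)\to(\tilde{\beta(\xi)}h_1)/h_1 = H_{\beta(\xi)}(1,\cdot)$, which is exactly the claim. The powers of $t$ are dictated by parabolic homogeneity: the Heisenberg dilations $\delta_s(x,y,z)=(sx,sy,s^2z)$ satisfy $h_t=t^{-Q/2}(h_1\circ\delta_{1/\sqrt t})$ and scale $\tilde{\mathcal X},\tilde{\mathcal Y}$ by $s^{-1}$ and $\tilde{\mathcal Z}$ by $s^{-2}$, which is precisely the bookkeeping encoded in $|\cdot|$ and $\beta$.

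The first (denominator) limit is the statement that the Heisenberg heat kernel is the small-time tangent-cone limit of the rescaled kernel on $SU(2)$. I would prove it from the explicit formulas: $h_t$ is given by the Gaveau--Hulanicki integral representation, while $p_t$ admits an explicit spectral expansion on $SU(2)$ (as in \cite{CowlingSikora2001}); by the rotational symmetry of $L$ both kernels depend only on the radial variable $r$ (resp.\ $\rho=\sqrt{x^2+y^2}$) and on $z$. Substituting $(\sqrt t\,r,\theta,tz)$ and performing a Laplace/saddle-point asymptotic analysis of the resulting integrals as $t\downarrow 0$ should produce $h_1(r,\theta,z)$ as the leading term, uniformly on compacts. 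Crucially, I would carry out this analysis not only for $p_t$ itself but in the stronger \emph{locally $C^\infty$} sense---i.e.\ for all $(r,\theta,z)$-derivatives of $S_tp_t$---by differentiating under the integral sign before passing to the limit; this stronger convergence is what makes the differential operators in the next step behave well in the limit.

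For the numerator limit I would first write the left-invariant fields $\tilde X,\tilde Y,\tilde Z$ as explicit first-order operators in the coordinates $(r,\theta,z)$, and likewise realize $\tilde{\mathcal X},\tilde{\mathcal Y},\tilde{\mathcal Z}$ in the matching coordinates on $\mathbb H$. Conjugating by the rescaling gives operators $A^t_{\xi_i}:=t^{w(\xi_i)/2}S_t\tilde{\xi_i}S_t^{-1}$, where $w(X)=w(Y)=1$ and $w(Z)=2$, and the central computation is that $A^t_{\xi_i}\to\tilde{\beta(\xi_i)}$ as $t\downarrow0$, with the difference an operator whose coefficients carry strictly positive powers of $t$ (this is the concrete form of the \emph{nilpotentization} of the subRiemannian structure on $SU(2)$ near the identity). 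Because $t^{|\xi|/2}S_t\tilde\xi S_t^{-1}=A^t_{\xi_1}\cdots A^t_{\xi_k}$, expanding the product produces the leading term $\tilde{\beta(\xi_1)}\cdots\tilde{\beta(\xi_k)}=\tilde{\beta(\xi)}$ together with remainders; the weight grading---brackets raise weight, so commutators and lower-order coefficients are multiplied by positive powers of $t$---ensures every remainder vanishes as $t\downarrow0$. Applying this to the family $t^{Q/2}S_tp_t$ and using its locally $C^\infty$ convergence from the previous step gives $t^{(|\xi|+Q)/2}S_t(\tilde\xi p_t)\to\tilde{\beta(\xi)}h_1$.

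I expect the main obstacle to be the uniform control required to pass derivatives through the small-time limit: establishing the locally $C^\infty$ convergence $t^{Q/2}S_tp_t\to h_1$ with enough quantitative hold on the error terms that applying the $t$-dependent operators $A^t_{\xi_1}\cdots A^t_{\xi_k}$---whose lower-weight coefficients blow up before being tamed by the $t$-powers---still yields a clean limit uniformly on compacts. The vector-field computation and the grading bookkeeping are conceptually routine; the genuine analytic work is the Laplace-type asymptotics of the explicit kernels and their derivatives, carried out uniformly over compact sets of $(r,\theta,z)$, including near the coordinate singularity at $r=0$.
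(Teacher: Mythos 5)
Your proposal is correct and follows essentially the same route as the paper: separate convergence of the rescaled numerator and denominator via the conjugated vector fields $\tilde\xi^t\to\widetilde{\beta(\xi)}$, reduced to locally $C^\infty$ convergence $t^{2}p_t(\sqrt t\,r,\theta,tz)\to 2\pi^2 h_1(r,\theta,z)$, which the paper also obtains by Laplace-type analysis of the explicit integral representation $p_t=\frac{1}{\sqrt{4\pi t}}\int e^{-(\lambda+iz)^2/4t}q_t(\cos r\cosh\lambda)\,d\lambda$ with uniform dominated-convergence bounds on all derivatives. The only cosmetic difference is that the paper works from the Baudoin--Bonnefont transform of the Riemannian kernel rather than a spectral expansion, and it tracks an explicit normalization constant $2\pi^2$ coming from the choice of Haar measure, which cancels in the quotient exactly as you anticipate.
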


	For comparison, the scaling used in the model results of Mitchell \cite{Mitchell1999} is uniform in the spatial coordinates, which is appropriate for the Riemannian setting. The scaling that we employ is natural given the dilation structure on the Heisenberg group, and the fact that dilation of $SU(2)$ yields the Heisenberg group (for example, see \cite{Ricci1986} and the generalization to spheres in \cite{DooleyGupta1999}). Also, while Mitchell is able to state his results generally for arbitrary left-invariant differential operators of rank $k$ (that is, $\xi$ a homogeneous element of $\mathfrak{g}^{\otimes k}$), it's necessary in the subelliptic case to adapt the scaling in $t$ to the dilation structure as well. Of course, we could have stated the above result for slightly more general left-invariant differential operators by considering for example homogeneous elements of $(\mathrm{span}\{X,Y\})^{\otimes k}$ and $(\mathrm{span}\{Z\})^{\otimes k}$ and defining an appropriate mapping $\beta$ to $(\mathrm{span}\{\mathcal{X},\mathcal{Y}\})^{\otimes k}$ and $(\mathrm{span}\{\mathcal{Z}\})^{\otimes k}$. But this is implicit in the stated result.
	
	Now that we understand more precisely the type of result of interest, we generalize the statement to odd-dimensional spheres $\mathbb{S}^{2d+1}$.

\subsubsection{The CR sphere}
\label{s.intro-sphere}

Let $\mathbb{S}^{2d+1}$ denote the boundary of the unit ball in $\mathbb{C}^{d+1}$. In standard coordinates, this is
\[ \mathbb{S}^{2d+1} := \{(z_1,\ldots,z_{d+1})\in\mathbb{C}^{d+1}: |z_1|^2+\cdots+|z_{d+1}|^2=1\}. \]
Let $\mathcal{S}:= \sum_{j=1}^{d+1} z_j\frac{\partial}{\partial {z_j}}$, and 
define the vector fields 
\[ T_j := \frac{\partial}{\partial {z_j}} - \overline{z}_j\mathcal{S} \]
for $j=1,\ldots,d+1$; then the $T_j$ generate the span of the holomorphic tangent space $T_{1,0}\mathbb{S}^{2d+1}=T_{1,0}\mathbb{C}^{d+1}\cap\mathbb{C}T\mathbb{S}^{2d+1}$.
The subLaplacian on $\mathbb{S}^{2d+1}$ is given by
\[  L := 2\sum_{j=1}^{d+1} (T_j\overline{T}_j+\overline{T}_jT_j); \]
$L$ is essentially self-adjoint on $C^\infty(\mathbb{S}^{2d+1})$ with respect to the uniform measure. The transversal direction is the real vector field
\[ T_0 := i(\mathcal{S}-\overline{\mathcal{S}}) = i\sum_{j=1}^{d+1} \left(z_j\frac{\partial}{\partial {z_j}} - \overline{z}_j \frac{\partial}{\partial{\overline{z}_j}}\right), \]
and $\mathbb{C}T\mathbb{S}^{2d+1}$ is generated by $T_j,\overline{T}_j,T_0$.

We now introduce the cylindrical coordinates on $\mathbb{S}^{2d+1}$ used in \cite{BaudoinWang2013}. 
As usual, $\mathbb{CP}^{d}\subset\mathbb{C}^{d+1}\setminus\{0\}$ will denote complex projective space, the set of one-dimensional complex-linear subspaces of $\mathbb{C}^{d+1}$; that is, $\mathbb{CP}^{d}\cong(\mathbb{C}^{d+1}\setminus\{0\})/ \sim$ where
\[ (w_1,\ldots,w_d)\sim (\lambda w_1,\ldots,\lambda w_d) \text{ for all } \lambda\in\mathbb{C}\setminus\{0\}. \]
$\mathbb{CP}^{d}$ is a complex manifold of (complex) dimension $d$, 
%(and thus a real manifold of dimension $2d$), 
and one can construct an atlas on $\mathbb{CP}^{d}$ via the patches
\[ U_k = \{(z_1,\ldots,z_{d+1}):z_k\ne0\} \]
for $k=1,\ldots, d+1$ by 
\[ w^k=(w^k_1,\ldots,w^k_{k-1},w^k_{k+1},\ldots,w^k_{d+1})
	=\left(\frac{z_1}{z_k},\cdots,\frac{z_{k-1}}{z_k},\frac{z_{k+1}}{z_k},\cdots,\frac{z_{d+1}}{z_k}\right).\] 
In particular, since we focus on Brownian motion emitted from the north pole, we fix the local coordinates $w_j:=w_j^{d+1}=z_j/z_{d+1}$ for $j=1,\ldots,d$. The $z_j$'s are called the {\em homogeneous coordinates} and the $w_j$'s are the {\em inhomogeneous coordinates}.

Let $(w_1,\ldots,w_d,z)$ be local coordinates for $\mathbb{S}^{2d+1}$ where $(w_1,\ldots,w_d)$ are the local inhomogeneous coordinates for $\mathbb{CP}^{d}$ described above 
and $z$ is the local fiber coordinate, that is, $(w_1,\ldots,w_d)$ parameterizes the complex lines passing through the north pole and $z$ determines a point on the line that is unit distance from the north pole. More explicitly, these coordinates are given by
\[ (w_1,\ldots,w_d, z)\mapsto\left(\frac{w_1e^{iz}}{\sqrt{1+\rho^2}},\ldots,\frac{w_de^{i z}}{\sqrt{1+\rho^2}},\frac{e^{i z}}{\sqrt{1+\rho^2}}\right)\]
where $ z\in\mathbb{R}/2\pi\mathbb{Z}$, $w=(w_1,\ldots,w_d)\in\mathbb{CP}^d$, and $\rho=|w|=\sqrt{\sum_{j=1}^d |w_j|^2}$.

Let $\mathbb{H}^{2d+1}$ denote the real Heisenberg group of dimension $2d+1$. That is, let
\[ \mathfrak{h}^{2d+1} := \mathrm{span}\{\mathcal{X}_1,\mathcal{Y}_1,\ldots,\mathcal{X}_d,\mathcal{Y}_d,\mathcal{Z}_0\} \]
be equipped with the Lie bracket given by
\[ [\mathcal{X}_j,\mathcal{Y}_k] = 2\delta_{jk}\mathcal{Z}_0 \]
for all $j,k=1,\ldots,d$, and all other brackets not determined by anti-symmetry are 0. 
Then 
\[ \tilde{\mathcal{X}}_j = \frac{\partial}{\partial{x_j}} - y_j\frac{\partial}{\partial z}, 
	\tilde{\mathcal{Y}}_j =\frac{\partial}{\partial{y_j}} + x_j\frac{\partial}{\partial z}, \text{ and }
	\tilde{\mathcal{Z}}_0 = \frac{\partial}{\partial z}.  \]
are the associated left-invariant vector fields with respect to the group operation given by the Baker-Campbell-Hausdorff-Dynkin formula
\[ (\mathbf{x},\mathbf{y},z)\cdot(\mathbf{x}',\mathbf{y}',z')
	= \left(\mathbf{x}+\mathbf{x}',\mathbf{y}+\mathbf{y}',z + z' + \sum_{j=1}^d (x_jy_j'-x_j'y_j) \right)
\]
for $\mathbf{x},\mathbf{x}',\mathbf{y},\mathbf{y}'\in\mathbb{R}^d$ and $z,z'\in\mathbb{R}$. 
We let $\mathbb{H}^{2d+1}$ denote $\mathbb{R}^{2d+1}$ as a Lie group equipped with this operation, and the subLaplacian on $\mathbb{H}^{2d+1}$ is given by
\[ \mathcal{L} = \sum_{j=1}^d (\tilde{\mathcal{X}}_j^2+\tilde{\mathcal{Y}}_j^2). \]

To more easily see the parallel structure with $\mathbb{S}^{2d+1}$, for $j=1,\ldots,d$ we may take the coordinates $w_j=y_j+ix_j$ on $\mathbb{H}^{2d+1}$ so that
\[ \frac{\partial}{\partial w_j} = \frac{1}{2}\left(\frac{\partial}{\partial y_j}
	-i\frac{\partial}{\partial x_j}\right) 
\text{ and }
\frac{\partial}{\partial \overline{w}_j} = \frac{1}{2}\left(\frac{\partial}{\partial y_j}
	+i\frac{\partial}{\partial x_j}\right). \]
Then, if we let
\begin{align*}
\mathcal{\mathcal{Z}}_j &:= \frac{1}{2}(\tilde{\mathcal{Y}}_j-i\tilde{\mathcal{X}}_j)
	%= \frac{1}{2}\left(\left(\frac{\partial}{\partial{y_j}} - x_j\frac{\partial}{\partial z}\right)
	%	-i\left(\frac{\partial}{\partial{x_j}} + y_j\frac{\partial}{\partial z}\right)\right) \\
	%&= \frac{1}{2}\left(\frac{\partial}{\partial{y_j}} -i\frac{\partial}{\partial{x_j}}
	%	- i(y_j - ix_j)\frac{\partial}{\partial z}\right) \\
	= \frac{\partial}{\partial w_j} + \frac{1}{2}i \overline{w}_j\frac{\partial}{\partial z}
\end{align*}
and
\[ \overline{\mathcal{Z}}_j := \frac{\partial}{\partial \overline{w}_j} - \frac{1}{2}i w_j\frac{\partial}{\partial z} \]
for $j=1,\ldots,d$, we may write 
\[ \mathcal{L}
	= 2\sum_{j=1}^d (\mathcal{Z}_j\overline{\mathcal{Z}}_j + \overline{\mathcal{Z}}_j\mathcal{Z}_j). \]

As in the Lie group setting, we consider the Hermite functions for $\mathbb{S}^{2d+1}$ and $\mathbb{H}^{2d+1}$, that is, the logarithmic derivatives of the heat kernels associated to the given subLaplacians. Let $p_{t,d}$ denote the fundamental solution to the Cauchy equation for $L$ on $\mathbb{S}^{2d+1}$ emitted from the ``north pole'', that is, the point $(0,\ldots,0,1)$. Analogously, $h_{t,d}$ will denote the fundamental solution to the Cauchy equations for $\mathcal{L}$ on $\mathbb{H}^{2d+1}$ emitted from the identity 0. 

For $\kappa=(\kappa_1,\ldots,\kappa_k)\in\{0,1,\ldots,d\}^k$, let 
\[ |\kappa| := \#\{i: \kappa_i\in\{1,\ldots,d\}\} + 2\#\{i:\kappa_i=0\} \]
and $T_\kappa := T_{\kappa_1}\cdots T_{\kappa_k}$ and similarly for $\mathcal{Z}_\kappa$.

\begin{theorem}
\label{t.sphere}
For $\kappa=(\kappa_1,\ldots,\kappa_k)\in\{0,1,\ldots,d\}^k$, we have that
\[ \lim_{t\downarrow0} t^{|\kappa|/2}\frac{(T_\kappa p_{t,d})(\sqrt{t}w_1,\ldots,\sqrt{t}w_d,t z)}{ p_{t,d}(\sqrt{t}w_1,\ldots,\sqrt{t}w_d,t z)} =
	\frac{(\mathcal{Z}_\kappa h_{1,d})(w_1,\ldots,w_d, z)}{h_{1,d}(w_1,\ldots,w_d, z)}  \]
uniformly on compact subsets of $\mathbb{CP}^d\times\mathbb{R}$.
\end{theorem}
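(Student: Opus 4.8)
The plan is to use the rotational symmetry of the two heat kernels to collapse the problem to a one-variable (radial) asymptotic analysis, and then to run the same Laplace-type estimates that underlie Theorem \ref{t.main}, now keeping track of the dimension $d$. To begin, I would note that the subLaplacian $L$ on $\mathbb{S}^{2d+1}$ commutes with the $U(d)$-action that rotates the inhomogeneous coordinates $w=(w_1,\ldots,w_d)$ unitarily while fixing the north pole, as well as with the fiber rotation in $z$; hence the kernel emitted from the north pole is radial, $p_{t,d}(w,z)=P_{t,d}(\rho,z)$ with $\rho=|w|$, and by the analogous invariance $h_{t,d}(w,z)=\mathcal{H}_{t,d}(\rho,z)$ on $\mathbb{H}^{2d+1}$. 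This radial structure is what makes the $\mathbb{S}^3=SU(2)$ analysis mostly sufficient: the genuinely new phenomena are the presence of several holomorphic directions $T_1,\ldots,T_d$ and the $d$-dependence of the radial profile. Observe also that, after translating between the coordinates $(r,\theta,z)$ and $(w_1,z)$ and taking the complex-linear combinations that express $T_1$ through $\tilde X,\tilde Y$ and $T_0$ through $\tilde Z$ (so that $T_1\leftrightarrow\mathcal{Z}_1$ and $T_0\leftrightarrow\mathcal{Z}_0$ under $\beta$), the case $d=1$ of Theorem \ref{t.sphere} follows from Theorem \ref{t.main}.

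Next I would record the action of the operators on a radial function. Writing the holomorphic field $T_j$ and the transversal field $T_0$ in the coordinates $(w,z)$ and applying them to $P_{t,d}(\rho,z)$, each $T_j$ produces a factor $\bar w_j$ (times a smooth scalar function of $\rho^2$) paired with a radial derivative $\partial_{\rho}P_{t,d}$, together with a $\partial_z P_{t,d}$ contribution; iterating, $\left(T_\kappa p_{t,d}\right)/p_{t,d}$ expands as a polynomial in $(w,\bar w)$ whose coefficients are products of the radial logarithmic derivatives of $P_{t,d}$. The combinatorics producing this expansion is governed solely by the radial coordinate structure and the bracket relations, and these agree on $\mathbb{S}^{2d+1}$ and $\mathbb{H}^{2d+1}$ in the scaling limit. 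Concretely, under the dilation $(w,z)\mapsto(\sqrt t\,w,tz)$ the rescaled pullbacks of $T_j$ and $T_0$ converge, after multiplication by $t^{1/2}$ and $t$ respectively, to the left-invariant Heisenberg fields $\mathcal{Z}_j$ and $\mathcal{Z}_0$ — this is the statement that the nilpotentization of $\mathbb{S}^{2d+1}$ at the north pole is $\mathbb{H}^{2d+1}$ — and the total weight is exactly $|\kappa|/2$, which counts $1$ for each holomorphic $T_j$ and $2$ for each $T_0$. Consequently, matching the two sides of Theorem \ref{t.sphere} reduces to the convergence of the rescaled radial kernel together with its radial derivatives.

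The analytic core is thus to establish
\[ \lim_{t\downarrow0} t^{d+1}\,p_{t,d}(\sqrt t\,w,tz) = h_{1,d}(w,z), \]
in the strong sense that the corresponding scaled radial logarithmic derivatives of $P_{t,d}$ converge to those of $\mathcal{H}_{1,d}$; the normalization $t^{d+1}=t^{Q/2}$ reflects the homogeneous dimension $Q=2d+2$, and the evaluation at time $1$ on the right is forced by the Heisenberg dilation identity $h_{t,d}(\sqrt t\,w,tz)=t^{-(d+1)}h_{1,d}(w,z)$. For the proof I would use the explicit integral representation of $P_{t,d}$ from \cite{BaudoinWang2013} and the Gaveau-type representation of $\mathcal{H}_{t,d}$, extracting the small-time behavior by a Laplace/saddle-point analysis. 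This is the same computation carried out for $d=1$ in proving Theorem \ref{t.main}; the only new ingredient is the $d$-dependent amplitude in the integrand (a power reflecting the dimension of the base $\mathbb{CP}^d$), which alters the prefactor but not the location of the critical point, and therefore enters the leading asymptotics in a controlled manner.

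I expect the main obstacle to be precisely this last step: controlling the $d$-dependent integral representation of $P_{t,d}$ uniformly as $t\downarrow0$ under the scaling, and in particular justifying that differentiation in $\rho$ and $z$ commutes with the asymptotic limit while the error terms stay uniform for $(w,z)$ in compact sets. A secondary but delicate point is the bookkeeping for cross-derivatives $T_\kappa$ with distinct indices $\kappa_i\in\{1,\ldots,d\}$: one must check that the $(w,\bar w)$-polynomial coefficients generated on the sphere side collapse, under $(w,z)\mapsto(\sqrt t\,w,tz)$ and in the limit $t\downarrow0$, to exactly those generated on the Heisenberg side, so that no spurious mixed terms survive. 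Granting the uniform radial asymptotics and this structural matching, one reassembles the polynomial expansion from the second step to obtain the claimed limit, with uniformity on compact sets inherited from the uniform radial estimates.
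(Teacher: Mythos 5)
Your overall strategy is the one the paper follows: exploit the radial form of both kernels, rescale the vector fields $T_j$, $T_0$ so that they converge to $\mathcal{Z}_j$, $\mathcal{Z}_0$, and reduce the theorem to the convergence of $t^{d+1}$ times the radial and vertical derivatives of $p_{t,d}(\sqrt{t}r,tz)$. (One small correction at that stage: the paper's normalization gives $\lim_{t\downarrow0}t^{d+1}\partial_r^n p_{t,d}(\sqrt{t}r,tz)=2\,\partial_r^n h_{1,d}(r,z)$, with a factor of $2$; this is harmless for the theorem since it cancels in the logarithmic derivative, but the limit you assert without the constant is not quite what the integral representations produce.)

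The genuine gap is that you defer exactly the step that carries all of the new content for $d>1$. The paper's mechanism is concrete: the Riemannian kernel on $\mathbb{S}^{2d+1}$ satisfies $q_{t,d}(x)=\pi^{-2}(2\pi)^{-(d-1)}e^{(d^2-1)t}q_t^{(d-1)}(x)$, where $q_t$ is the $SU(2)$ kernel, so the $d$-dependence enters as $d-1$ extra $x$-derivatives of the functions $Q_2(1+R_2)$ already controlled in Section~\ref{s.qt} --- not merely as a ``$d$-dependent amplitude'' altering a prefactor. One must then show (Proposition~\ref{p.drq2}) that after multiplying by the extra factor $t^{d-1}$, every term in the Fa\`a di Bruno expansion of $\partial_r^n Q_2^{(k)}(t,\cos(\sqrt{t}r)\cosh\lambda)$ with $k<d-1$ vanishes in the limit, and among the $k=d-1$ terms only the multi-index $\alpha=(d,0,\ldots,0)$ survives, producing precisely the factor $(\lambda/(2\sinh\lambda))^d$ appearing in $h_{1,d}$: each of the $d$ first-derivative factors $\frac{1}{4t}\frac{d}{dx}\arccosh^2 x$ contributes one power of $t^{-1}$ and one factor $\arccosh x/\sqrt{x^2-1}\to\lambda/\sinh\lambda$, and the $t^{-(d-1)}$ so created is exactly cancelled by the normalization $t^{d+1}$ versus $t^2$. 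Your appeal to a ``Laplace/saddle-point analysis'' that ``alters the prefactor but not the location of the critical point'' gestures at this but does not establish the power counting, nor the uniform-in-$t$ dominating functions needed to pass the limit inside the $\lambda$-integral for the higher derivatives $Q_2^{(k)}$ and $R_2^{(k)}$. You correctly flag this as the main obstacle, but since you then ``grant'' it, the argument as written is a plausible plan rather than a proof.
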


\subsection{Discussion}

The proofs of Theorems \ref{t.main} and \ref{t.sphere} are based on a careful analysis of explicit expressions of the subelliptic heat kernel proved by Baudoin and coauthors in \cite{BaudoinBonnefont2009} and \cite{BaudoinWang2013}. Other derivations of the subelliptic kernel on $SU(2)\cong \mathbb{S}^3$ and more generally $\mathbb{S}^{2d+1}$ are available for example in \cite{CDFI2011} and \cite{Greiner2013}. The approach of \cite{BaudoinBonnefont2009} and \cite{BaudoinWang2013} is to take advantage of the contact structure of these manifolds to write the subLaplacian as $L=\Delta-Z^2$, where $\Delta$ is the standard Laplacian and $Z$ is the Reeb vector field which commutes with the subLaplacian. This allows the subelliptic heat kernel to be written as an integral transform of the Riemannian heat kernel, which has a well-known explicit form. In \cite{BaudoinBonnefont2009} and \cite{BaudoinWang2013}, the authors use these integral expressions to prove a variety of results about the subelliptic kernel, including results on small-time asymptotics. Relevant to the present paper, in Proposition 3.13 of \cite{BaudoinBonnefont2009}, the authors prove a ``zero-th order'' version of Theorem \ref{t.main}; that is, they prove that the subelliptic heat kernel on $SU(2)$ with the given scaling converges to (a constant multiple of) the subelliptic heat kernel on $\mathbb{H}^3$.  

We note that in \cite{Bonnefont2012, Wang2016} the authors use similar techniques to develop integral expressions for subelliptic heat kernels on anti-de Sitter spaces $\mathbf{H}^{2d+1}$ and their universal covers $\widetilde{\mathbf{H}^{2d+1}}$. In particular these expressions involve the same functions as those appearing in the $\mathbb{S}^{2d+1}$ kernel, reflecting in part their symmetry with the spaces considered here. For example, in the case $d=1$, we have $\mathbf{H}^3\cong SL(2,\mathbb{R})$ where
\begin{align*} 
SL(2,\mathbb{R}) &= \{ A\in M_2(\mathbb{R}) : \det A=1 \}
	\cong \{ (a,b)\in\mathbb{C} : |a|^2-|b|^2=1 \}
\end{align*}
is the Lie group with Lie algebra $\mathfrak{sl}(2) = \{ A\in M_2(\mathbb{R}) : \mathrm{tr}(A)=0 \}$. Note that $\mathfrak{sl}(2) = \mathrm{span}\{X,Y,Z\}$ where the basis elements satisfy the commutation relations
\[[X,Y]=2Z,\quad [Z,X]=-2Y,\quad [Y,Z]=-2X. \]
Comparing these with (\ref{e.su2-1}) and (\ref{e.su2}) shows the symmetries between $SU(2)$ and $SL(2,\mathbb{R})$. In particular, these similarities and the symmetric expressions for the subelliptic heat kernels allows us to extend the analysis and subsequent results of the present paper to those settings, although we do not do so explicitly in this paper.

In \cite{Mitchell1999}, which gives our model results in the elliptic setting on compact Lie groups, Mitchell uses a more general approach analyzing parametrix approximations to the kernels and their derivatives. Such analysis is more difficult in the subelliptic setting, but may still be possible, especially in the ``step two'' case. The present paper presents results in the $SU(2)$ setting first, and then treats $\mathbb{S}^{2d+1}$ essentially as a generalization of the three-dimensional case. There are other natural generalizations of the $SU(2)$ case that one could consider. In particular, any compact semisimple Lie group $G$ may be equipped with a natural ``step two'' subRiemannian structure by decomposing its Lie algebra into a Cartan subalgebra and its orthogonal complement with respect to the Killing form. The orthogonal complement then generates a subelliptic structure on $G$. Analogues of the results in the present paper in this setting are the subject of ongoing work.

In addition to the references already cited above, small-time asymptotics of subelliptic kernels have generated a great amount of work, see for example \cite{Barilari2013,BBN2012,BenArous1988,BenArous1989,BenArous1991,Franchi2014,Leandre1987-2,Leandre1987-1,SeguinMansouri2012} and their references.
%In joint and independent work, Ben Arous and L{\'e}andre \cite{BenArous1988,BenArous1989,BenArous1991,Leandre1987-2,Leandre1987-1} obtained small-time estimates for general subelliptic heat kernels using probabilistic approaches. 
We give further mention here to some results that are of particular relevance in the current setting. Beals, Greiner, and Stanton \cite{BGS1984} studied the small-time asymptotics of subelliptic heat kernels on CR manifolds by using pseudo-differential calculus. Beals, Gaveau, and Greiner \cite{BGG2000} obtained small-time estimates for the subelliptic heat kernel on the Heisenberg group through the explicit expression for the kernel developed independently by Gaveau \cite{Gaveau1977} and Hulanicki \cite{Hulanicki1976}. Using similar techniques to those developed in \cite{Bonnefont2012, Wang2016}, Baudoin and Wang study subelliptic heat kernels for the quaternionic Hopf fibration, including the small-time behavior, in \cite{BaudoinWang2014}.

\textit{Acknowledgement. } We thank the anonymous referee for valuable feedback that improved the paper, and in particular for identifying an error in an earlier version.

\section{Analysis of the $SU(2)$ elliptic heat kernel and its derivatives}
\label{s.qt}

Using the notation set in Section \ref{s.su(2)} of the introduction, let $\Delta=\Delta_{SU(2)}=X^2+Y^2+Z^2$ denote the standard Laplacian on $SU(2)$. The Riemannian heat kernel on $SU(2)$ is well-known; via the Poisson summation formula it may be written as, for $t>0$ and $g\in SU(2)$,
\[
	q_t(g) = q_t(\cos \delta) 
	= \frac{\sqrt{\pi}e^t}{4t^{3/2}}\frac{1}{\sin\delta}
		\sum_{k\in\mathbb{Z}} (\delta+2k\pi) e^{-(\delta+2k\pi)^2/4t} .
\]
where $\delta$ is the Riemannian distance of $g$ from the identity. Note that this is the kernel with respect to the normalized Haar measure $(\sin 2r)/4\pi^2\,dr\,d\theta\,dz$. When $g=(r,\theta,z)$ in the given cylindrical coordinates, we have $\cos\delta= \cos r\cos z$.
Reordering terms, we may rewrite this as
\begin{multline*} 
q_t(\cos \delta) = \frac{\sqrt{\pi}e^t}{4t^{3/2}}\frac{\delta}{\sin\delta}e^{-\delta^2/4t}\\
	\times \left( 1 + 2 \sum_{k=1}^\infty e^{-\pi^2k^2 /t} \left(\cosh\frac{\pi k\delta}{t} + \frac{2\pi k}{\delta} \sinh\frac{\pi k\delta}{t} \right) \right).
\end{multline*} 
It is clear that $q_t(\cos\delta)$ admits an analytic extension for $\delta\in\mathbb{C}$, and in particular 
\begin{align*}
q_t(\cosh\delta) 
	&= q_t(\cos i\delta) \\
%	&=  \frac{\sqrt{\pi}e^t}{4t^{3/2}} \frac{i\delta}{\sin(i\delta)}e^{\delta^2/4t}
%	\left(1+2\sum_{k=1}^\infty e^{-k^2\pi^2/t}\left(\cosh\frac{\pi k i\delta}{t}+\frac{2\pi k}{i \delta}\sinh\frac{\pi k i \delta}{t}\right)\right) \\
%	& = \frac{\sqrt{\pi}e^t}{4t^{3/2}} \frac{\theta}{\sinh\theta}e^{\theta^2/4t}\left(1+2\sum_{k=1}^\infty e^{-k^2\pi^2/t}\left(\cos\frac{i\pi k i\theta}{t}-\frac{2\pi k}{\theta}\sin\frac{i\pi k i\theta}{t}\right)\right)\\
	& = \frac{\sqrt{\pi}e^t}{4t^{3/2}} \frac{\delta}{\sinh\delta}e^{\delta^2/4t}\left(1+2\sum_{k=1}^\infty e^{-k^2\pi^2/t}\left(\cos\frac{\pi k \delta}{t}+\frac{2\pi k}{\delta}\sin\frac{\pi k \delta}{t}\right)\right).
\end{align*}
So we may write
\begin{equation}
	\label{e.qtx}
q_t(x) =  \frac{\sqrt{\pi}e^t}{4t^{3/2}} \cdot\left\{ \begin{array}{ll}
	Q_1(t,x) \left(1+ R_1(t,x) \right) & \text{for } x=\cos\delta\in[0,1] \\
	Q_2(t,x) \left(1+ R_2(t,x) \right) & \text{for } x=\cosh\delta\ge1
	\end{array} \right. 
\end{equation}
where
\[ Q_1(t,x) := \frac{\arccos x}{\sqrt{1-x^2}}e^{-(\arccos x)^2/4t},\]
\[ R_1(t,x):= 2\sum_{k=1}^\infty e^{-\pi^2k^2/t}\left(\cosh\left(\frac{\pi k \arccos x}{t}\right)+\frac{2\pi k}{\arccos x}\sinh\left(\frac{\pi k \arccos x}{t}\right)\right), \]
\[ Q_2(t,x):= \frac{\arccosh x}{\sqrt{x^2-1}}e^{(\arccosh x)^2/4t}, \]
and
\[ R_2(t,x) := 2\sum_{k=1}^\infty e^{-\pi^2k^2/t}\left(\cos\left(\frac{\pi k \arccosh x}{t}\right)+\frac{2\pi k}{\arccosh x}\sin\left(\frac{\pi k \arccosh x}{t}\right)\right). \]

In Proposition 3.5 of \cite{BaudoinBonnefont2009}, the authors take advantage of the facts that $L=\Delta-\partial_z^2$ and that $\Delta$ and $\partial_z$ commute to show that the subelliptic heat kernel on $SU(2)$ has the form (in cylindrical coordinates)
\[ p_t(r,z) = \frac{1}{\sqrt{4\pi t}}\int^\infty_{-\infty} e^{-(\lambda+i z)^2/4t} q_t(\cos r \cosh \lambda) d\lambda \]
for $r\in[0,\pi/2)$ and $z\in[-\pi,\pi]$.

\begin{remark}
Note that as expected the subelliptic kernel depends only the radial component of the horizontal coordinates (and the vertical coordinate $z$). The radial dependence reflects the ellipticity of the subLaplacian in the generating directions $X$ and $Y$.
\end{remark}

To analyze the derivatives of $p_t$, in the sequel we will first develop uniform bounds for derivatives of all orders of $Q_1$, $Q_2$, $R_1$ and $R_2$ that are necessary for the subsequent analysis. First let us set some basic notation.

\begin{notation}
\label{n.multi}
For any multi-index $\alpha=(\alpha_1,\ldots,\alpha_n)\in\{0,1,\ldots\}^n$ we set 
\[ |\alpha| := \sum_{k=1}^n \alpha_k \qquad \text{ and } \qquad 
	\|\alpha\|_n := \sum_{k=1}^n k\alpha_k. \]
We will also use $|\alpha|_{even}$ and $|\alpha|_{odd}$ for the sum of components with even and odd index respectively; that is,
\[ |\alpha|_{even} := \sum_{k=1}^{\lfloor n/2\rfloor} \alpha_{2k} \qquad \text{ and } \qquad
	|\alpha|_{odd} := \sum_{k=1}^{\lceil n/2\rceil} \alpha_{2k-1}. \]
Let
\[ \mathcal{J}_n := \{\alpha=(\alpha_1,\ldots,\alpha_n) : \|\alpha\|_n = n \} \]
with the convention that $\mathcal{J}_0$ is the empty set.
\end{notation}

\begin{remark}
\label{r.m}
One may note that $|(0,\ldots,0,1)|=1$ and $|(n,\ldots,0,0)|=n$, and for any other $\alpha\in\mathcal{J}_n$, $1<|\alpha|< n$. Also, for any $\alpha\in\mathcal{J}_n$,
\begin{align*}
|\alpha| = |\alpha|_{odd} + |\alpha|_{even} 
	\le |\alpha|_{odd} + 2|\alpha|_{even} 
	%= \alpha_1 + 2\alpha_2 + \alpha_3 + 2\alpha_4 + \cdots
	\le \|\alpha\|_n = n 
\end{align*}
with equality only when $\alpha=(\alpha_1,\alpha_2,0,\ldots,0)$, that is, when $\alpha_j=0$ for all $j=3,\cdots,n$.

\end{remark}

Using Notation \ref{n.multi}, we recall the standard chain rule formula
\begin{equation}
\label{chain} 
\frac{d^n}{dx^n} f(g(x)) = \sum_{\alpha\in\mathcal{J}_n} 
	\frac{n!}{\alpha_1!1!^{\alpha_1}\cdots \alpha_n!n!^{\alpha_n}} f^{(|\alpha|)}(g(x))
		\prod_{j=1}^n \left(g^{(j)}(x)\right)^{\alpha_j}. 
\end{equation}
In the sequel, for $n\in\mathbb{N}$ and $\alpha\in\mathcal{J}_n$ we will let
\[ c_{\alpha n} := \frac{n!}{\alpha_1!1!^{\alpha_1}\cdots \alpha_n!n!^{\alpha_n}}. \]

\subsection{Some useful limits and estimates}
In this section we establish some preliminary results that will allow uniform bounds on the derivatives of $q_t$.

\begin{notation}
\label{n.constant}
In the sequel, $C_{a_1,\ldots,a_r}$ represents a finite positive constant depending only the parameters $a_1,\ldots,a_r$. In a sequence of estimates or results, constants can and will vary from line to line with no distinction in notation.
\end{notation}

\begin{lemma}
	\label{l.newnew1}
	The functions $\arccos^2 x$ and $\arccosh^2 x$ are $C^\infty$ with bounded derivatives of all orders on $[0,1]$ and $[1,\infty)$, respectively.
\end{lemma}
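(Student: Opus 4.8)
The plan is to treat both assertions with the same two-step strategy—first smoothness, then boundedness of derivatives—while isolating the one genuinely delicate point, the endpoint $x=1$, where both $\arccos x$ and $\arccosh x$ carry a square-root singularity in their first derivatives. The key is that squaring exactly removes this singularity, so the difficulty is purely local to $x=1$.

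First I would dispatch smoothness away from that endpoint: $\arccos x$ is real-analytic on $[0,1)$ (its derivative $-1/\sqrt{1-x^2}$ is smooth there) and $\arccosh x$ is real-analytic on $(1,\infty)$, so their squares are $C^\infty$ on these sets. To handle $x=1$ I would substitute $x=\cos\theta$ (resp. $x=\cosh\lambda$) and observe that $\cos\sqrt{s}=\sum_{n\ge0}(-1)^n s^n/(2n)!$ and $\cosh\sqrt{s}=\sum_{n\ge0}s^n/(2n)!$ are entire functions of $s$ taking the value $1$ with nonvanishing derivative ($-\tfrac12$, resp. $\tfrac12$) at $s=0$. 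By the real-analytic inverse function theorem each admits a local analytic inverse $\psi$ near $x=1$, and by construction $\arccos^2 x=\psi(x)$ (resp. $\arccosh^2 x=\psi(x)$) on the relevant side. Hence both squared functions are analytic, in particular $C^\infty$, across $x=1$, which together with the previous step gives $C^\infty$ on all of $[0,1]$ and $[1,\infty)$.

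For boundedness of derivatives the interval $[0,1]$ is compact, so every derivative of the $C^\infty$ function $\arccos^2 x$ is automatically bounded, finishing that case. The substantive work is the non-compact end $x\to\infty$ for $\arccosh^2 x$; note the function itself grows like $(\log 2x)^2$ and is unbounded, so the claim concerns derivatives of order $\ge1$. Writing $\lambda=\arccosh x$, so that $\tfrac{d}{dx}=\tfrac{1}{\sinh\lambda}\tfrac{d}{d\lambda}$, I would prove by induction on $n$ that $\tfrac{d^n}{dx^n}\arccosh^2 x$ is a finite linear combination of terms $\lambda^a\cosh^b\lambda/\sinh^c\lambda$ satisfying the invariant $c-b=n$. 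Differentiating one such term in $\lambda$ preserves $c-b$ and produces three terms, after which the extra factor $1/\sinh\lambda$ raises each $c$ by one; the base case $n=0$ is $\lambda^2$ with $c-b=0$. For $n\ge1$ we then have $c>b$, so every term behaves like $\lambda^a\,e^{-n\lambda}$ as $\lambda\to\infty$ and tends to $0$; combined with continuity on any bounded piece $[1,M]$, this yields a uniform bound of $\tfrac{d^n}{dx^n}\arccosh^2 x$ on $[1,\infty)$.

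The main obstacle is the endpoint $x=1$: the naive derivatives of $\arccos x$ and $\arccosh x$ blow up there, and one must see precisely why squaring restores smoothness. The observation that $\cos\sqrt{s}$ and $\cosh\sqrt{s}$ are entire in $s=\theta^2$ is what makes this transparent and avoids a messy direct expansion of the competing singular factors. By comparison, the bookkeeping in the inductive formula at infinity is routine once the invariant $c-b=n$ is identified.
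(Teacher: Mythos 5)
Your proof is correct, but it takes a genuinely different route from the paper's. The paper computes the derivatives explicitly in the form $\frac{d^n}{dx^n}\arccos^2 x = M_n(x)/(1-x^2)^{(2n-1)/2}$ with $M_n = p_n\sqrt{1-x^2}+q_n\arccos x$, establishes the recursion $M_n' = (n-1)^2 M_{n-1}$ by induction, and then applies L'H\^opital's rule to obtain the \emph{exact} endpoint limits $2((n-1)!)^2/(2n-1)!!$; decay at infinity for $\arccosh^2$ follows from the degree count on $p_n,q_n$ against the power of $x^2-1$. You instead handle the only delicate point, $x=1$, structurally: since $\cos\sqrt{s}$ and $\cosh\sqrt{s}$ are entire in $s$ with nonvanishing derivative at $s=0$, the analytic inverse function theorem shows $\arccos^2$ and $\arccosh^2$ are real-analytic up to the endpoint — a stronger and cleaner conclusion than mere $C^\infty$, obtained with no computation. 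Your bookkeeping at infinity (terms $\lambda^a\cosh^b\lambda/\sinh^c\lambda$ with the invariant $c-b=n$ under $\frac{d}{dx}=\frac{1}{\sinh\lambda}\frac{d}{d\lambda}$) is also sound, noting that the individual terms are singular at $\lambda=0$ but you only invoke the decomposition for large $\lambda$ and fall back on continuity on $[1,M]$ otherwise. What your approach gives up is the explicit machinery: the recursion $M_n'=(n-1)^2M_{n-1}$ and the L'H\^opital argument are reused almost verbatim in Lemmas \ref{l.1-new} and \ref{l.1} (for $\cosh(K\arccos x)$ and $\cos(K\arccosh x)$, where the recursion becomes $N_{n+1}'=(K^2+n^2)N_n$ and the precise limit values matter), so the paper's computational proof is an investment that pays off later, whereas your argument would need to be supplemented there.
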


\begin{proof}
Note that
	\[ \frac{d}{dx}\arccos^2 x  
	= \frac{2\arccos x}{\sqrt{1-x^2}} 
	\]
	which one may see is bounded on $[0,1)$ and approaches 2 as $x\uparrow1$.	

One may show more generally for $n\ge2$ that the derivatives of $\arccos^2 x$ are of the form 
	\[
\frac{d^n}{dx^n}\arccos^2 x  
	= \frac{M_n(x)}{(1-x^2)^{(2n-1)/2}}
	= \frac{p_n(x)\sqrt{1-x^2}+q_n(x)\arccos x}{(1-x^2)^{(2n-1)/2}} 
\]
where $p_n$ and $q_n$ are polynomials of degree $n-2$ and $n-1$, respectively. Clearly these derivatives are bounded for $x$ away from 1. We will now show by induction on $n$ that
	\begin{equation}\label{e.acos.lim}
\lim_{x\uparrow 1} \frac{d^n}{d x^n} \arccos^2 x = 2\frac{((n-1)!)^2}{(2n-1)!!},
	\end{equation}
which is evidently true in the case $n=1$. 
	First, we show by induction that 
	\begin{equation}
	\label{e.mn} M_n'=(n-1)^2M_{n-1}.\end{equation} 
Necessarily,
\begin{equation*}
M_{n+1} = M_n'(1-x^2) + M_n(2n-1)x;
\end{equation*}
	combining this with the induction hypothesis on (\ref{e.mn}) gives
	\[ M_{n+1}= (n-1)^2M_{n-1}(1-x^2) + M_n(2n-1)x;
	\]
and thus
\begin{align*}
M_{n+1}' &= (n-1)^2(M'_{n-1}(1-x^2)-M_{n-1}2x) + M_n'(2n-1)x + M_n(2n-1) \\
	&= (n-1)^2(M'_{n-1}(1-x^2)- M_{n-1}2x) \\\	
	&\qquad + (n-1)^2M_{n-1}(2n-1)x + M_n(2n-1) \\
	&= (n-1)^2(M'_{n-1}(1-x^2) + (2n-3)M_{n-1}x) + M_n(2n-1) \\
	&= (n-1)^2M_n + M_n(2n-1) 
	= n^2M_n,
\end{align*}
	thus verifying (\ref{e.mn}).
Now, an application of L'H\^{o}pital's gives that
	\begin{multline*}
\lim_{x\downarrow 1} \frac{d^n}{dx^n} \arccos^2 x 
	 = \lim_{x\downarrow 1} \frac{M_n(x)}{(x^2-1)^{(2n-1)/2}} \\
	 =\lim_{x\downarrow 1} \frac{(n-1)^2 M_{n-1}(x)}{(2n-1)x(x^2-1)^{(2n-3)/2}} 
	= \frac{(n-1)^2}{2n-1} \lim_{x\downarrow 1} \frac{d^{n-1}}{d x^{n-1}} \arccos^2 x.
\end{multline*}
	Combined with the induction hypothesis, this proves (\ref{e.acos.lim}).

The argument for $\arccosh^2 x$ is completely analogous. We may show that the derivatives of are of the form 
\begin{equation}
\label{e.jarccosh2}
\frac{d^n}{dx^n}\arccosh^2 x  
	= \frac{M_n(x)}{(x^2-1)^{(2n-1)/2}}
	= \frac{p_n(x)\sqrt{x^2-1}+q_n(x)\arccosh x}{(x^2-1)^{(2n-1)/2}} 
\end{equation}
where $p_1\equiv0$ and otherwise $p_n$ and $q_n$ are polynomials of degree $n-2$ and $n-1$, respectively. Thus, the derivatives decay to 0 as $x$ goes to infinity; for $x$ close to 1, we note that in this case the numerators satisfy
$M_n'=-(n-1)^2M_{n-1}$, and thus
	\[
	\lim_{x\downarrow 1} \frac{d^n}{d x^n} \arccosh^2 x = 2(-1)^n\frac{((n-1)!)^2}{(2n-1)!!}.
\]
\end{proof}

\begin{lemma}
\label{l.1-new}
Fix $K\ge1$. For all $n\in\mathbb{N}$ and $x\in(0,1)$
	\begin{align*}
\left| \frac{d^n}{dx^n} \cosh(K\arccos x)\right|
%		&\le \left| \frac{d^n}{dx^n} \cosh(K\arccos x)\bigg|_{x=0} \right| \\
%		&\le C_nK^n\left(\sinh\left(\frac{K \pi}{2}\right)+ \cosh\left(\frac{K \pi}{2}\right)\right)
	&\le C_nK^ne^{K\pi/2}.
\end{align*} 
\end{lemma}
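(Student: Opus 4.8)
The plan is to remove the apparent singularity of $\arccos$ at $x=1$ by exploiting the evenness of $\cosh$. Since $\cosh$ is even, $\cosh(Ku)=F(u^2)$ where
\[ F(w):=\sum_{m=0}^\infty\frac{K^{2m}}{(2m)!}w^m=\cosh(K\sqrt w) \]
is entire in $w$ (the series has infinite radius of convergence). Writing $\psi(x):=\arccos^2 x$, we obtain the representation $\cosh(K\arccos x)=F(\psi(x))$, with $F$ smooth and, by Lemma \ref{l.newnew1}, $\psi\in C^\infty([0,1])$ with $|\psi^{(j)}(x)|\le C_j$ for every $j$. This is the crucial reduction: a direct application of the chain rule to $\cosh(K\,\cdot\,)\circ\arccos$ produces terms involving derivatives of $\arccos$ that blow up as $x\uparrow1$ and cancel only in combination, whereas $F\circ\psi$ has no singular behaviour to track.

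Applying the chain rule (\ref{chain}) to $F(\psi(x))$ and using $|\psi^{(j)}|\le C_j$, I would reduce the statement to the uniform bound
\[ \sup_{w\in[0,\pi^2/4]}\bigl|F^{(m)}(w)\bigr|\le C_m K^m e^{K\pi/2}\qquad(1\le m\le n), \]
since then, because $\psi(x)=\arccos^2 x\in(0,\pi^2/4)$ for $x\in(0,1)$ and $|\alpha|\le n$ with $K\ge1$,
\[ \Bigl|\tfrac{d^n}{dx^n}F(\psi(x))\Bigr|\le\sum_{\alpha\in\mathcal{J}_n}c_{\alpha n}\bigl|F^{(|\alpha|)}(\psi(x))\bigr|\prod_{j}|\psi^{(j)}(x)|^{\alpha_j}\le C_n K^n e^{K\pi/2}. \]
For the crux I would differentiate the power series termwise to obtain the manifestly positive expression
\[ F^{(m)}(w)=\frac{K^{2m}}{2^m}\sum_{i=0}^\infty\frac{t^{2i}}{(2i)!\,\prod_{\ell=1}^m(2i+2\ell-1)},\qquad t:=K\sqrt w, \]
where the product comes from the identity $\frac{(2i)!\,(i+m)!}{(2i+2m)!\,i!}=2^{-m}\prod_{\ell=1}^m(2i+2\ell-1)^{-1}$. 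Bounding $\prod_{\ell=1}^m(2i+2\ell-1)\ge(2i+1)^m$ and inserting $(2i+1)^{-m}=\frac1{(m-1)!}\int_0^1(-\log u)^{m-1}u^{2i}\,du$ gives
\[ \sum_{i\ge0}\frac{t^{2i}}{(2i)!\,(2i+1)^m}=\frac{1}{(m-1)!}\int_0^1(-\log u)^{m-1}\cosh(tu)\,du\le C_m\,\frac{\cosh t}{t^m}\quad(t\ge1), \]
the last step from concentration of the integrand near $u=1$ (use $-\log u\le(1-u)/u\le2(1-u)$ on $[\tfrac12,1]$ together with $\int_0^1 r^{m-1}e^{-tr}\,dr\le(m-1)!\,t^{-m}$, and bound the $[0,\tfrac12]$ part crudely by $\cosh(t/2)$); for $t\le1$ the sum is trivially $\le\cosh 1$.

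To finish I would split according to $t=K\sqrt w$. On $\{K\sqrt w\le1\}$ one has $F^{(m)}(w)\le C_m K^{2m}$, and $K^{2m}\le C_m K^m e^{K\pi/2}$ for $K\ge1$ because $\sup_{K\ge1}K^m e^{-K\pi/2}<\infty$. On $\{K\sqrt w\ge1\}$ the estimate above yields $F^{(m)}(w)\le C_m K^m\cosh(K\sqrt w)/(\sqrt w)^m$; writing $v=\sqrt w\in[1/K,\pi/2]$, the map $v\mapsto\cosh(Kv)/v^m$ is unimodal, since its logarithmic derivative $K\tanh(Kv)-m/v$ has positive derivative $K^2\cosh^{-2}(Kv)+m v^{-2}$, so its maximum on $[1/K,\pi/2]$ is attained at an endpoint, where it equals $\cosh(1)K^m$ or $(2/\pi)^m\cosh(K\pi/2)$; as $K^m\le C_m e^{K\pi/2}$, both are $\le C_m e^{K\pi/2}$, giving $F^{(m)}(w)\le C_m K^m e^{K\pi/2}$ here as well. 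The main obstacle is precisely this $K$-bookkeeping: the derivative is genuinely of size $K^{2m}$ near $x=1$ (where $\arccos x\to0$) but only $K^m e^{K\pi/2}$ near $x=0$, and any estimate that decouples the singular factor $w^{-m/2}$ from $\cosh(K\sqrt w)$ couples the worst polynomial growth to the worst exponential and produces the too-large bound $K^{2m}e^{K\pi/2}$. Keeping the two tied to the same $w$ — via the positive series and the unimodality argument — is what recovers the sharp power $K^n$.
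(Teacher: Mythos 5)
Your proof is correct, and it takes a genuinely different route from the paper's. The paper writes $\frac{d^n}{dx^n}\cosh(K\arccos x)=N_n(K,x)/(1-x^2)^{(2n-1)/2}$, proves the recursion $N_{n+1}'=(K^2+n^2)N_n$ by induction, and deduces that the $n$-th derivative alternates in sign and is monotone on $(0,1)$, hence dominated in absolute value by its value at $x=0$; the power $K^n$ is then read off by tracking the $K$-degree of the polynomial coefficients $g_{n,c},g_{n,s}$. You instead exploit the evenness of $\cosh$ to write $\cosh(K\arccos x)=F(\arccos^2x)$ with $F(w)=\cosh(K\sqrt w)$ entire, invoke Lemma \ref{l.newnew1} for the bounded derivatives of $\arccos^2$, and prove the uniform bound $|F^{(m)}|\le C_mK^me^{K\pi/2}$ on $[0,\pi^2/4]$ by a termwise power-series estimate; I checked the combinatorial identity, the integral representation of $(2i+1)^{-m}$, and the endpoint/unimodality argument, and they all hold. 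Your route is actually close in spirit to what the paper does in the \emph{next} lemma (Lemma \ref{l.2-new}, where $\sinh(K\arccos x)/\arccos x$ is expanded in powers of $\arccos^2x$), so it fits the paper's toolkit; its advantages are that it avoids the sign-and-monotonicity induction (in particular the claim that $N_{n+1}$ can change sign only at $x=1$) and is more mechanical, at the cost of the delicate bookkeeping needed to keep the power at $K^m$ rather than $K^{2m}$ — which you handle correctly by keeping the factor $(\sqrt w)^{-m}$ tied to $\cosh(K\sqrt w)$. The paper's argument, in exchange, yields the slightly stronger pointwise fact that the derivative is maximized at $x=0$.
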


\begin{proof}
	Let $F(K,x) := \cosh(K \arccos x)$. We will show that $F^{(n)}(K,x)$ is negative and increasing on $(0,1)$ when $n$ is odd, and positive and decreasing on $(0,1)$ when $n$ is even. Thus the function is dominated on $(0,1)$ by its value at 0, from which the estimate follows. 
	
First, one may show that the derivatives have the form
\begin{align*}
F^{(n)}(K,x)
	&= \frac{N_n(K,x)}{(1-x^2)^{(2n-1)/2}} \\
	&= \frac{g_{n,c}(K,x)\sqrt{1-x^2}\cosh(K \arccos x) +g_{n,s}(K,x)\sinh(K \arccos x)}{(1-x^2)^{(2n-1)/2}}
\end{align*}
where $g_{n,c}$ and $g_{n,s}$ are polynomials in $K$ and $x$.  	
In particular, $N_1(K,x)= -K\sinh(K\arccos x)$ is negative and increasing on $(0,1)$, and 
\[ N_2(K,x) =K^2\sqrt{1-x^2}\cosh(K\arccos x) - Kx\sinh(K\arccos x) \] 
is positive and decreasing on $(0,1)$.
	Completely analogously to the proof of (\ref{e.mn}) in Lemma \ref{l.newnew1}, one may show by induction that $N_{n+1}'=(K^2+n^2)N_n$.
	Thus, assuming that $N_n$ is negative (and increasing) if $n$ is odd, and positive (and decreasing) if $n$ is even, this shows that $N_{n+1}$ monotone on $(0,1)$ as desired (decreasing if $n+1$ is even and increasing if $n+1$ is odd). Furthermore, based on their form, $N_{n+1}(K,1)=0$ and can only change sign at $x=1$. Thus, $N_{n+1}$ is positive for even $n+1$ and negative for odd $n+1$.
	
	Since the sign of $F^{(n)}$ is determined by $N_n$ and $\frac{d}{dx}F^{(n)}=F^{(n+1)}$, this gives the desired behavior of $F^{(n)}$. Finally, since $N_n'=(K^2+(n-1)^2)N_{n-1}$ and the definition of $N_n$ imply that
	\[ N_{n+1}= (K^2+(n-1)^2)N_{n-1}(1-x^2) + N_n(2n-1)x,
	\]
it also follows by induction that $g_{2n,c}$ and $g_{2n+1,c}$ are of degree $2n$ in $K$ and $g_{2n-1,s}$ and $g_{2n,s}$ are of degree $2n-1$ in $K$ giving the correct order in $K$.
\end{proof}

\begin{lemma}
\label{l.2-new}
	Fix $K\ge1$. For all $n\in\mathbb{N}$ and $x\in(0,1)$,
\[ \left|\frac{d^n}{dx^n} \frac{\sinh(K \arccos x)}{\arccos x}\right|
%	\le C_n K^{n+1}\left(\cosh\left(\frac{K\pi}{2}\right)+\sinh\left(\frac{K\pi}{2}\right)\right). \]
	\le C_n K^{n+1}e^{K\pi/2}. \]
\end{lemma}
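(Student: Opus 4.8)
The plan is to follow exactly the template established in Lemma \ref{l.1-new}, since the function $G(K,x) := \sinh(K\arccos x)/\arccos x$ is the natural companion to $F(K,x) = \cosh(K\arccos x)$. First I would establish, by induction on $n$, that the derivatives have the form
\[
G^{(n)}(K,x) = \frac{\widetilde{N}_n(K,x)}{(1-x^2)^{(2n-1)/2}\,\arccos x}
\]
where the numerator $\widetilde{N}_n$ is built from $\sqrt{1-x^2}\sinh(K\arccos x)$, $\cosh(K\arccos x)$, and polynomials in $K$ and $x$ — the extra factor of $\arccos x$ in the denominator being responsible for the shifted power $K^{n+1}$ rather than $K^n$ in the final bound. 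The base case $G(K,x)$ and its first derivative can be computed directly, and I expect a recursion of the same flavor as $N_{n+1}' = (K^2+n^2)N_n$ to hold here, with lower-order correction terms coming from differentiating the $1/\arccos x$ factor.

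The crux is then a monotonicity/sign argument on $(0,1)$ establishing that $G^{(n)}(K,x)$ is controlled by its boundary value, so that each derivative attains its extremum at $x=0$ (where $\arccos 0 = \pi/2$, producing the $e^{K\pi/2}$ growth). As in Lemma \ref{l.1-new}, I would argue that $\widetilde{N}_n$ vanishes at $x=1$ and can only change sign there, so the sign of $G^{(n)}$ alternates with the parity of $n$ and each $G^{(n)}$ is monotone on $(0,1)$; hence $|G^{(n)}(K,x)| \le |G^{(n)}(K,0)|$. Evaluating at $x=0$ then reduces the estimate to bounding $\sinh(K\pi/2)$ and $\cosh(K\pi/2)$ (both $\le e^{K\pi/2}$) against the polynomial coefficients, whose degree in $K$ I must track to confirm the exponent $n+1$.

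The main obstacle I anticipate is the extra $1/\arccos x$ factor, which does two things: it complicates the recursion for $\widetilde{N}_n$ (differentiation now produces terms with $(\arccos x)^{-1}$ of higher order, or equivalently one must clear denominators carefully to keep $\widetilde{N}_n$ polynomial in the right quantities), and it is precisely what raises the $K$-degree by one, so I must verify the degree bookkeeping rather than simply copy it from the previous lemma. A subtler point is behavior as $x\uparrow 1$: there $\arccos x \to 0$, so I should confirm that $G$ and its derivatives remain bounded there (the singularity from $1/\arccos x$ being cancelled by $\sinh(K\arccos x)\sim K\arccos x$ near the top), which is what guarantees the sign can change only at the endpoint and that no interior blow-up spoils the ``extremum at $0$'' conclusion. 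Once the recursion and the degree count are pinned down, the monotonicity argument and the evaluation at $x=0$ go through verbatim as in Lemma \ref{l.1-new}.
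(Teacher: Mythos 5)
Your plan takes a genuinely different route from the paper, and it stalls at its crux. The paper does not attempt a monotonicity argument for this lemma at all: it expands
\[ \frac{\sinh(K\arccos x)}{\arccos x}=\sum_{m=0}^\infty\frac{K^{2m+1}}{(2m+1)!}(\arccos^2 x)^m, \]
differentiates termwise (justifying the interchange), controls $\frac{d^n}{dx^n}(\arccos^2 x)^m$ via the chain rule formula (\ref{chain}) together with the uniform bounds on the derivatives of $\arccos^2 x$ from Lemma \ref{l.newnew1}, and then resums, comparing the resulting series with $\frac{d^n}{dx^n}\cosh(Kx)$ evaluated at $x=\pi/2$. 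This yields $C_nK^{n+1}e^{K\pi/2}$ directly, with no structural information about the derivatives needed.

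The gap in your approach is the recursion. The sign/monotonicity induction in Lemma \ref{l.1-new} is driven entirely by the identity $N_{n+1}'=(K^2+n^2)N_n$, which is a consequence of the constant-coefficient ODE $f''=K^2f$ satisfied by $f(u)=\cosh(Ku)$ in the variable $u=\arccos x$. Your function $g(u)=\sinh(Ku)/u$ satisfies instead $ug''+2g'=K^2ug$, and the analogous computation already fails at the second step: writing $G^{(n)}=\widetilde N_n/(1-x^2)^{(2n-1)/2}$ one finds $\widetilde N_1=-g'(u)$ and $\widetilde N_2'=-\bigl(g'''(u)+g'(u)\bigr)$, which is not a constant multiple of $\widetilde N_1$ since $g'$ does not satisfy a second-order constant-coefficient ODE. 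The ``lower-order correction terms'' you anticipate are precisely what break the implication from the sign of $\widetilde N_n$ to the monotonicity of $\widetilde N_{n+1}$, and you have not supplied a replacement mechanism. (Your ansatz with a single factor of $\arccos x$ in the denominator is also not the right form; repeated differentiation of $1/\arccos x$ produces denominators of order $(\arccos x)^{n+1}$.) The conclusion you aim for may well be true, since $g$ is, like $\cosh(Ku)$, a power series in $u^2$ with positive coefficients, but carrying out your route would require a genuinely new argument, whereas the paper's termwise expansion sidesteps the issue entirely.
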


\begin{proof}
First noting that
\[ \frac{\sinh(K \arccos x)}{\arccos x}
	= \sum_{m=0}^\infty \frac{K^{2m+1}}{(2m+1)!}(\arccos x)^{2m}, \]
we would like to say 
\begin{equation}
\label{e.sinhK}
\frac{d^{n}}{dx^{n}} \frac{\sinh(K \arccos x)}{\arccos x}
	= \sum_{m=0}^\infty \frac{K^{2m+1}}{(2m+1)!} 
		\frac{d^{n}}{dx^{n}}(\arccos x)^{2m}.
\end{equation}
To this end, using the notation from (\ref{chain}), we have
\begin{multline}
	\label{e.acos2m}
\frac{d^{n}}{dx^{n}} (\arccos x)^{2m}= \frac{d^{n}}{dx^{n}} (\arccos^2 x)^{m} \\
	= \sum_{\alpha\in\mathcal{J}_n, |\alpha|\le m} c_{ \alpha n} m(m-1)\cdots 
		(m-|\alpha|+1)(\arccos^2 x)^{m-|\alpha|} 
		\prod_{j=1}^n \left(\frac{d^{j}}{dx^{j}} \arccos^2 x \right)^{\alpha_j}.
\end{multline}
Combining this with Lemma \ref{l.newnew1} we have that for all $x\in(0,1)$
\begin{align*}
\left|\frac{d^{n}}{dx^{n}} (\arccos x)^{2m}\right|
	\le C_n \left(\frac{\pi}{2}\right)^{2m} m(m-1)\cdots((m-n)^++1).
\end{align*}
Since 
	\[ \sum_{m}^\infty \frac{K^{2m+1}}{(2m+1)!}\cdot C_n \left(\frac{\pi}{2}\right)^{2m} m^n<\infty, \]
this justifies (\ref{e.sinhK}).
Thus,
\begin{align*}
	\left|\frac{d^{n}}{dx^{n}} \frac{\sinh(K \arccos x)}{\arccos x}\right|
	&\le K\sum_{m=0}^\infty \frac{K^{2m}}{(2m)!}
		\left|\frac{d^{n}}{dx^{n}}(\arccos x)^{2m}\right| \\
	&\le K\sum_{m=0}^\infty \frac{K^{2m}}{(2m)!}
		C_n \left(\frac{\pi}{2}\right)^{2m} m(m-1)\cdots((m-n)^++1) \\
	&\le C_nK \frac{d^{n}}{dx^{n}}\bigg|_{x=\pi/2} \cosh(Kx)
	\le  C_nK^{n+1}e^{K\pi/2}
\end{align*}
	(where the constants $C_n$ can and do vary between lines).
\end{proof}

\begin{lemma}
\label{l.1}
	Fix $K\ge1$. For all $n\in\mathbb{N}$ and $x\in(1,\infty)$,
\begin{equation} 
\label{e.l1} 
	\begin{split}
	\left|\frac{d^n}{dx^n} \cos(K\arccosh x)\right|
		&\le \lim_{x\downarrow1}\left|\frac{d^n}{dx^n} \cos(K\arccosh x)\right| \\
		&= \frac{(-1)^n}{(2n-1)!!} \prod_{m=0}^{n-1} (K^2+m^2). 
	\end{split}
	\end{equation}
\end{lemma}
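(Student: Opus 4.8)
The plan is to reduce everything to the second-order ODE satisfied by $u(x) := \cos(K\arccosh x)$ and then to run an induction on $n$ for the bound, using an elementary integral inequality in place of the monotonicity argument of Lemma \ref{l.1-new} (which is unavailable here, since $u$ oscillates as $x\to\infty$ and its derivatives change sign infinitely often).

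First I would record that $(x^2-1)(u')^2 = K^2(1-u^2)$, and that differentiating this once produces the linear equation
\[ (x^2-1)u'' + xu' + K^2 u = 0. \]
Differentiating it $n$ times via Leibniz gives, for every $n\ge0$,
\[ (x^2-1)u^{(n+2)} + (2n+1)x\,u^{(n+1)} + (n^2+K^2)u^{(n)} = 0. \]
As in Lemma \ref{l.1-new} I would write $u^{(n)}(x) = N_n(x)/(x^2-1)^{(2n-1)/2}$; differentiating this definition and substituting the differentiated ODE with index $n-1$ shows immediately that
\[ N_n' = -(K^2+(n-1)^2)N_{n-1}, \qquad N_1(x) = -K\sin(K\arccosh x), \]
which bypasses the explicit polynomial bookkeeping entirely. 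Since $\arccosh^2 x$ is smooth on $[1,\infty)$ by Lemma \ref{l.newnew1} and $y\mapsto\cos(K\sqrt{y})$ is entire, $u = \cos(K\sqrt{\arccosh^2 x})$ is smooth up to $x=1$; hence each $u^{(n)}$ has a finite limit there and $N_n(1^+)=0$ for $n\ge1$, so that $N_{n+1}(x) = -(K^2+n^2)\int_1^x N_n(t)\,dt$.

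To evaluate the limit I would set $L_n := \lim_{x\downarrow1}u^{(n)}(x)$, finite by the smoothness just noted. Evaluating the $n$-times differentiated ODE at $x=1$ — where the top-order term drops because $x^2-1$ vanishes and $u^{(n+2)}$ stays bounded — yields $(2n+1)L_{n+1} + (n^2+K^2)L_n = 0$, i.e.\ $L_{n+1} = -\tfrac{n^2+K^2}{2n+1}L_n$ with $L_0=1$. By induction this gives exactly $L_n = \frac{(-1)^n}{(2n-1)!!}\prod_{m=0}^{n-1}(K^2+m^2)$, which is the asserted right-hand side (the sign being carried by the signed limit, while its modulus is the bound).

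For the bound itself I would prove by induction that $|N_n(x)| \le |L_n|\,(x^2-1)^{(2n-1)/2}$, equivalently $|u^{(n)}(x)| \le |L_n|$. The base case $n=1$ follows from $|\sin t|\le|t|$ together with $\arccosh x \le \sinh(\arccosh x) = \sqrt{x^2-1}$. For the step, writing $\Phi=\arccosh x$ and substituting $t=\cosh\phi$ in $N_{n+1}(x) = -(K^2+n^2)\int_1^x N_n(t)\,dt$ reduces the claim, after applying the inductive hypothesis, to the elementary inequality
\[ \int_0^\Phi (\sinh\phi)^{2n}\,d\phi \le \frac{(\sinh\Phi)^{2n+1}}{2n+1}, \]
which holds because the difference of the two sides vanishes at $\Phi=0$ and has derivative $(\sinh\Phi)^{2n}(\cosh\Phi-1)\ge0$; combined with $|L_{n+1}| = \tfrac{K^2+n^2}{2n+1}|L_n|$ this closes the induction. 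The main obstacle is exactly this step: the boundary-domination trick of Lemma \ref{l.1-new} rests on a fixed sign of the derivatives on a compact interval, which fails here, so the integral recursion together with the $\sinh$-power inequality is what must replace it.
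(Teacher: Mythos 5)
Your proof is correct, and it arrives at the same two structural facts the paper's argument rests on --- the representation $G^{(n)}(K,x)=N_n(K,x)/(x^2-1)^{(2n-1)/2}$ and the recursion $N_n'=-(K^2+(n-1)^2)N_{n-1}$ --- but by a genuinely different route, and it closes the estimate differently. The paper gets the recursion by inductively tracking the polynomial form of $N_n$ (as in Lemma \ref{l.newnew1}), evaluates the limit at $x\downarrow1$ via L'H\^opital, and proves the bound by a critical-point argument: at an interior critical point $x_0$ of $G^{(n)}$ one has $N_{n+1}(x_0)=0$, so the identity $N_{n+1}=-(K^2+(n-1)^2)N_{n-1}(x^2-1)-(2n-1)xN_n$ yields $|G^{(n)}(K,x_0)|\le\tfrac{K^2+(n-1)^2}{2n-1}|G^{(n-1)}(K,x_0)|$, and the inductive hypothesis dominates every critical value by the boundary limit. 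You instead derive everything from the ODE $(x^2-1)u''+xu'+K^2u=0$, which gives the recursion with no polynomial bookkeeping; you read off the limit by evaluating the $n$-times differentiated ODE at $x=1$, where the top-order term drops; and you obtain the pointwise bound by integrating the recursion, $N_{n+1}=-(K^2+n^2)\int_1^xN_n$, together with the inequality $\int_0^\Phi\sinh^{2n}\phi\,d\phi\le\tfrac{\sinh^{2n+1}\Phi}{2n+1}$. Your route buys a cleaner derivation and a bound valid at every point without any case analysis of critical points and endpoint behavior; the paper's route is uniform with the neighboring Lemmas \ref{l.newnew1} and \ref{l.1-new}, where the same machinery also delivers sign and monotonicity information. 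The base case ($|\sin t|\le|t|$ and $\arccosh x\le\sqrt{x^2-1}$), the vanishing $N_n(1^+)=0$ from smoothness of $\cos(K\sqrt{y})\circ\arccosh^2$, and the agreement of $|L_{n+1}|=\tfrac{K^2+n^2}{2n+1}|L_n|$ with the stated product all check out.
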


\begin{proof}
	Let $G(K,x):=\cos(K \arccosh x)$. One may verify the form of the limit by induction as was done for (\ref{e.acos.lim}). The $n=1$ case is easily verifiable by direct computation:
	$G'(K,x)=-\frac{K\sin(K\arccosh x)}{\sqrt{x^2-1}}$ and $\lim_{x\downarrow 1} |G'(K,x)| = K^2$.
	More generally, and completely analogously to Lemma \ref{l.newnew1} and $\cosh(K\arccos x)$, one may show that for $n\ge2$ the derivatives are of the form
\begin{align*}
	G^{(n)}(K,x) &= \frac{N_n(K,x)}{(x^2-1)^{(2n-1)/2}} \\ 
	&= \frac{g_{n,c}(K,x)\sqrt{x^2-1}\cos(K \arccosh x) +g_{n,s}(K,x)\sin(K \arccosh x)}{(x^2-1)^{(2n-1)/2}}
\end{align*}
	where $g_{n,c}$ and $g_{n,s}$ are polynomials in $K$ and $x$ of degree $n-2$ and $n-1$ in $x$ respectively, so that $N_n(K,1)=0$. Again similarly to the proof of (\ref{e.mn}), one may show via induction that $N_n'=-(K^2+(n-1)^2)N_{n-1}$,
	and then, similarly to proof of (\ref{e.acos.lim}), induction combined with an application of L'H\^{o}pital's gives the desired limit.

	To show the inequality in \eqref{e.l1}, we work again by induction. One may verify the $n=1$ case directly that $|G'(K,x)|\le \lim_{x\downarrow 1} |G'(K,x)| = K^2$.
To deal with $n\ge2$, we note that the function $G^{(n)}$ of course has its critical points when $G^{(n+1)}=0$ and thus when $N_{n+1}=0$. Similarly to the proof of Lemma \ref{l.newnew1}, we have that
		\begin{equation*}
		N_{n+1}= -(K^2+(n-1)^2)N_{n-1}(x^2-1) - N_n(2n-1)x. 
	\end{equation*}
	Thus for any critical point $x_0>1$ of $G_n$
	\[ N_n(K,x_0) = -\frac{(K^2+(n-1)^2)N_{n-1}(K,x_0)(x_0^2-1)}{(2n-1)x_0}, \]
and so
\begin{align*}
	|G^{(n)}(K,x_0)| = \frac{|N_n(K,x_0)|}{(x_0^2-1)^{(2n-1)/2}} 
		&= \frac{(K^2+(n-1)^2)}{(2n-1)x_0}
			\frac{|N_{n-1}(K,x_0)|}{(x_0^2-1)^{(2n-3)/2}} \\
		&\le \frac{(K^2+(n-1)^2)}{(2n-1)} |G^{(n-1)}(K,x_0)|.
\end{align*}
	By the induction hypothesis, $G^{(n-1)}$ is dominated by (the absolute value of) its limit as $x\downarrow1$. This shows that all local maxima and minima of $G^{(n)}$ are dominated by its own limit as $x\downarrow1$ and thus the estimate holds for all $x>1$.
\end{proof}

\begin{lemma}
\label{l.2}
Fix $K\ge1$. For all $n\in\mathbb{N}$,
\[ \lim_{x\downarrow1}\left|\frac{d^n}{dx^n} \frac{\sin(K \arccosh x)}{\arccosh x}\right|
	\le C_n K^{2n+1}. \]
Also, for all $n\in\mathbb{N}$ and $x\in\left(1,\cosh\left(\frac{\pi}{2}\right)\right)$,
\[ \left|\frac{d^n}{dx^n} \frac{\sin(K \arccosh x)}{\arccosh x}\right|
	\le C_n K^{2n+1}e^{K\pi/2}. \]
\end{lemma}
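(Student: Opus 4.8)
The plan is to mimic the proof of Lemma \ref{l.2-new}, replacing $\arccos$ by $\arccosh$ and $\sinh$ by $\sin$, and exploiting the fact that, by Lemma \ref{l.newnew1}, the function $\arccosh^2 x$ also has bounded derivatives of all orders on $[1,\infty)$. First I would start from the power series
\[ \frac{\sin(K\arccosh x)}{\arccosh x} = \sum_{m=0}^\infty \frac{(-1)^m K^{2m+1}}{(2m+1)!}(\arccosh^2 x)^m, \]
and justify term-by-term differentiation exactly as in Lemma \ref{l.2-new}: the chain rule (\ref{chain}) expresses $\frac{d^n}{dx^n}(\arccosh^2 x)^m$ as a sum over $\alpha\in\mathcal{J}_n$ with $|\alpha|\le m$ of terms $c_{\alpha n}\, m(m-1)\cdots(m-|\alpha|+1)(\arccosh^2 x)^{m-|\alpha|}\prod_{j=1}^n(\frac{d^j}{dx^j}\arccosh^2 x)^{\alpha_j}$, and Lemma \ref{l.newnew1} bounds each factor, so the differentiated series is dominated by a convergent series in $m$ and the interchange is valid.

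For the first (limiting) estimate, I would take $x\downarrow 1$ inside the differentiated series. The key point is that $\arccosh^2 x\to 0$, so in the chain-rule expansion every term with $m-|\alpha|>0$ vanishes in the limit; only those with $|\alpha|=m$ survive. By Remark \ref{r.m} such $\alpha$ can occur only when $m=|\alpha|\le\|\alpha\|_n=n$, so the infinite sum collapses to a finite sum over $m\le n$. Using the limits $\lim_{x\downarrow1}\frac{d^j}{dx^j}\arccosh^2 x$ computed in Lemma \ref{l.newnew1}, each surviving term is a constant (depending on $n$) times $K^{2m+1}$, and since $K\ge1$ and $m\le n$ this is at most $C_n K^{2n+1}$, giving the first bound. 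The interchange of the limit with the sum is justified by the domination established in the next step.

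For the estimate on $(1,\cosh(\pi/2))$, I would note that there $\arccosh x\in(0,\pi/2)$, so $(\arccosh^2 x)^{m-|\alpha|}\le(\pi/2)^{2m}$; combined with the bounded-derivative bounds of Lemma \ref{l.newnew1}, the chain rule yields
\[ \left|\frac{d^n}{dx^n}(\arccosh x)^{2m}\right|\le C_n\Big(\frac{\pi}{2}\Big)^{2m} m(m-1)\cdots((m-n)^++1), \]
which is precisely the bound appearing in Lemma \ref{l.2-new}. Summing against $K^{2m+1}/(2m+1)!$ reproduces the computation there and gives $C_n K^{n+1} e^{K\pi/2}\le C_n K^{2n+1}e^{K\pi/2}$, the last inequality because $K\ge1$; this also supplies the uniform domination used to pass the limit inside the sum above. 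I expect the only genuinely delicate point to be the bookkeeping in the limit step — confirming that exactly the terms with $|\alpha|=m$ remain and that Remark \ref{r.m} forces $m\le n$ — since everything else is a direct transcription of Lemma \ref{l.2-new}.
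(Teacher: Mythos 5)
Your proposal is correct, and the second estimate (the uniform bound on $(1,\cosh(\pi/2)))$ is handled exactly as the paper does it: transcribe the proof of Lemma \ref{l.2-new} with $\arccosh$ in place of $\arccos$, using the boundedness of the derivatives of $\arccosh^2 x$ from Lemma \ref{l.newnew1} and summing against $K^{2m+1}/(2m+1)!$. Where you genuinely diverge from the paper is in the first (limiting) estimate. The paper obtains the quantities $\lim_{x\downarrow1}\frac{d^{n}}{dx^{n}}(\arccosh x)^{2\ell}$ indirectly: it expands $\cos(K\arccosh x)$ in the same power series, invokes the closed form $\frac{(-1)^n}{(2n-1)!!}\prod_{m=0}^{n-1}(K^2+m^2)$ for its limiting $n$-th derivative from Lemma \ref{l.1}, and matches coefficients of powers of $K$ to conclude that these limits vanish for $\ell>n$ and to read off their exact values for $\ell\le n$. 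You instead compute the same limits directly from the chain rule \eqref{chain}: since $\arccosh^2 x\to0$ while each $\frac{d^j}{dx^j}\arccosh^2 x$ has a finite limit by Lemma \ref{l.newnew1}, only the terms with $|\alpha|=m$ survive, and Remark \ref{r.m} forces $m=|\alpha|\le\|\alpha\|_n=n$, so the series truncates at $m\le n$ and the bound $C_nK^{2n+1}$ follows. Your route is more self-contained (it does not use Lemma \ref{l.1} at all) and isolates exactly what is needed for the inequality, at the cost of not producing the explicit closed form $\frac{(-1)^n}{(2n-1)!!}\sum_{\ell=1}^n\frac{a_{\ell,n}}{2\ell+1}K^{2\ell+1}$ that the paper's coefficient comparison yields; for the stated lemma either suffices, and your justification of the limit--sum interchange via the domination from the second step is the same device the paper relies on when it writes the limit as a termwise limit of the series.
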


\begin{proof}
Similar to the proof of  Lemma \ref{l.2-new}, we may show that
\begin{equation}
\label{e.sinK}
\lim_{x\downarrow1}\frac{d^{n}}{dx^{n}} \frac{\sin(K \arccosh x)}{\arccosh x}
	= \sum_{\ell=0}^\infty \frac{(-1)^\ell}{(2\ell+1)!}K^{2\ell+1} 
		\lim_{x\downarrow1}\frac{d^{n}}{dx^{n}}(\arccosh x)^{2\ell}.
\end{equation}
and
\[ \lim_{x\downarrow1}\frac{d^{n}}{dx^{n}}\cos(K\arccosh x)
	= \sum_{m=0}^\infty \frac{(-1)^\ell}{(2\ell)!}K^{2\ell}
		\lim_{x\downarrow1}\frac{d^{n}}{dx^{n}}(\arccosh x)^{2\ell}. \]
By Lemma \ref{l.1} 
\[ \lim_{x\downarrow1}\frac{d^{n}}{dx^{n}}\cos(K\arccosh x)
	= \frac{(-1)^n}{(2n-1)!!}\sum_{\ell=1}^n a_{\ell,n} K^{2\ell}
%\left(a_nK^{2n} +a_{n-1}K^{2(n-1)}+\cdots+a_1K^2\right) 
\]
	for some coefficients $a_{\ell,n}>0$. Thus comparing coefficients gives, for all $\ell>n$,
\[ \lim_{x\downarrow1}\frac{d^{n}}{dx^{n}}(\arccosh x)^{2\ell} =0, \]
and, for $\ell=1,\ldots, n$,
\[ \lim_{x\downarrow1}\frac{d^{n}}{dx^{n}}(\arccosh x)^{2\ell} 
	= (-1)^{n-\ell}a_{\ell,n} \frac{(2\ell)!}{(2n-1)!!}. \]
Thus, combining these limits with (\ref{e.sinK}) gives
\begin{align*} 
\lim_{x\downarrow1}\frac{d^{n}}{dx^{n}} \frac{\sin(K \arccosh x)}{\arccosh x}
	&= \frac{(-1)^n}{(2n-1)!!}\sum_{\ell=1}^n \frac{a_{\ell,n}}{2\ell+1}K^{2\ell+1},
\end{align*}
giving the desired bound on the limit.

Finally, the above estimates also imply that
	\begin{align*}
\left|\frac{d^{n}}{dx^{n}} \frac{\sin(K \arccosh x)}{\arccosh x}\right|
	&= \left|\sum_{\ell=0}^\infty \frac{(-1)^\ell}{(2\ell+1)!}K^{2\ell+1} 
		\frac{d^{n}}{dx^{n}}(\arccosh x)^{2\ell}\right| \\
	&\le K\sum_{\ell=0}^\infty \frac{K^{2\ell}}{(2\ell)!} 
		\left|\frac{d^{n}}{dx^{n}}(\arccosh x)^{2\ell}\right|.
\end{align*}
	Thus, writing the expression analogous to (\ref{e.acos2m}) for $\arccosh x$, and using the estimates from Lemma \ref{l.newnew1}, the proof concludes as in Lemma \ref{l.2-new}.
\end{proof}

\begin{remark}
	We will not directly use the bound on the limit of $\frac{d^n}{dx^n} \frac{\sin(K \arccosh x)}{\arccosh x}$. However, as in Lemma \ref{l.1}, it should be true that $\frac{d^{n}}{dx^{n}} \frac{\sin(K \arccosh x)}{\arccosh x}$ is dominated by its limit as $x$ approaches 1, giving a bound of 
	\[ \left|\frac{d^{n}}{dx^{n}} \frac{\sin(K \arccosh x)}{\arccosh x}\right|\le C_n K^{2n+1} \]
for all $x>1$. However, we do not give that proof here and the second bound in the statement of Lemma \ref{l.2} suffices for our purpose.
\end{remark}

\begin{lemma} 
\label{l.ect}
	For all $c>0$, $n\ge0$, and $t\in(0,1)$,
\[ \sum_{k=1}^\infty e^{-ck^2/t}k^n\le \sum_{k=1}^\infty e^{-ck/t}k^n \le C_n e^{-c/t}. \]
\end{lemma}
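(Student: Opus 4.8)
The plan is to dispatch the two inequalities separately: the left one is immediate, and the right one reduces to the convergence of a fixed series once the leading term has been pulled out.

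For the left inequality I would simply note that $k^2 \ge k$ for every integer $k \ge 1$, so that $-ck^2/t \le -ck/t$ (using $c,t>0$) and hence $e^{-ck^2/t} \le e^{-ck/t}$ termwise. Multiplying by $k^n \ge 0$ and summing over $k$ gives the claim; no convergence concern arises, since both sides consist of nonnegative terms and the right-hand side is shown finite by what follows.

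For the right inequality the key move is to factor out the $k=1$ contribution and then exploit $t<1$. Writing
\[ \sum_{k=1}^\infty e^{-ck/t}k^n = e^{-c/t}\sum_{k=1}^\infty e^{-c(k-1)/t}k^n, \]
I would observe that for every $k \ge 1$ we have $k-1 \ge 0$ and $1/t \ge 1$, so $c(k-1)/t \ge c(k-1)$ and therefore $e^{-c(k-1)/t} \le e^{-c(k-1)}$. Consequently
\[ \sum_{k=1}^\infty e^{-c(k-1)/t}k^n \le \sum_{k=1}^\infty e^{-c(k-1)}k^n = e^{c}\sum_{k=1}^\infty e^{-ck}k^n. \]
The final series converges (for instance by the ratio test, whose limiting ratio is $e^{-c}<1$) to a finite constant depending only on $c$ and $n$, which I would absorb into $C_n$, with $c$ playing the role of a fixed parameter. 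This yields $\sum_{k=1}^\infty e^{-ck/t}k^n \le C_n e^{-c/t}$, uniformly in $t\in(0,1)$.

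I do not anticipate any real obstacle here. The only point requiring care is that the replacement of $1/t$ by $1$ in the exponent is legitimate precisely because, after extracting $e^{-c/t}$, the remaining exponent $c(k-1)/t$ carries the nonnegative coefficient $k-1$; the same substitution applied to the original exponent $ck/t$ before factoring would go the wrong way. The uniformity in $t$ is built into this step, and the resulting constant is manifestly independent of $t$.
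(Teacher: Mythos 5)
Your proof is correct. It shares the paper's key first step of extracting the factor $e^{-c/t}$, but controls the remaining sum differently: the paper rewrites the tail as $(1-e^{-c/t})^{-1}\sum_{k\ge1}(e^{-c/t})^{k-1}(1-e^{-c/t})k^n$, recognizes the sum as the $n^{\mathrm{th}}$ moment of a geometric random variable with parameter $p(t)=1-e^{-c/t}$, and uses that $p(t)$ is bounded away from $0$ on $t\in(0,1)$; you instead use the monotonicity $e^{-c(k-1)/t}\le e^{-c(k-1)}$ (valid since $k-1\ge0$ and $1/t\ge1$) to dominate the $t$-dependent tail by the single fixed convergent series $e^{c}\sum_{k\ge1}e^{-ck}k^n$. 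Your route is more elementary and self-contained -- it avoids any appeal to moment asymptotics (and sidesteps the paper's slightly loose claim that the moment is ``on the order of $1/p(t)$'', when it is really of order $p(t)^{-n}$, harmless here since $p(t)\ge 1-e^{-c}$) -- while the paper's phrasing is more compact. Both arguments correctly produce a constant depending on $c$ and $n$ only, uniform in $t\in(0,1)$, and your remark about why the substitution $1/t\mapsto 1$ must happen \emph{after} factoring out $e^{-c/t}$ is exactly the right point of care.
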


\begin{proof} The first inequality is trivial. For the second, we have that
\begin{align*}
\sum_{k=1}^\infty e^{-ck/t}k^n 
	&= \frac{e^{-c/t}}{1-e^{-c/t}}
		\sum_{k=1}^\infty \left(e^{-c/t}\right)^{k-1}\left(1-e^{-c/t}\right) k^n.
\end{align*}
	Note that the sum is the $n^{\mathrm{th}}$ moment of a geometric random variable with $t$ dependent parameter $p(t)=1-e^{-c/t}$, and is thus on the order of $1/p(t)$ (with implicit constant depending on $n$). Of course, $(1-e^{-c/t})^{-1}$ is bounded on $(0,1)$, and this completes the estimate.
\end{proof}

\subsection{Estimates for $R_1$, $R_2$, $Q_1$, and $Q_2$}

In this section, we give uniform bounds in $x$ for the factors $Q_1$ and $Q_2$ and remainder terms $R_1$ and $R_2$, as well as their derivatives. 

\begin{notation}
For $n\ge0$, let 
\[ R_1^{(n)}(t,x) := \frac{\partial^n}{\partial x^n} R_1(t,x) \]
and similarly for $R_2$, $Q_1$, and $Q_2$.
\end{notation}

\begin{lemma}
\label{l.R1}
	For all $n\ge0$, for all $t\in(0,1)$ and $x\in(0,1)$
\begin{align*}
\left|R_1^{(n)}(t,x)\right|
	\le C_n \frac{e^{-\pi^2/t}}{t^{n+1}}.
\end{align*}
\end{lemma}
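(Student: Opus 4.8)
The plan is to differentiate the defining series for $R_1$ term by term in $x$ and to estimate each summand with the hyperbolic bounds already in hand. Writing $K = \pi k/t$ and factoring $\frac{2\pi k}{\arccos x}\sinh\!\left(\frac{\pi k\arccos x}{t}\right) = 2\pi k\cdot\frac{\sinh(K\arccos x)}{\arccos x}$, the two $x$-dependent building blocks in the $k$-th summand are exactly $\cosh(K\arccos x)$ and $\frac{\sinh(K\arccos x)}{\arccos x}$, the functions controlled by Lemmas \ref{l.1-new} and \ref{l.2-new}. First I would justify exchanging $\frac{d^n}{dx^n}$ with $\sum_{k=1}^\infty$: for fixed $t\in(0,1)$ and every $k\ge1$ we have $K=\pi k/t\ge\pi>1$, so those lemmas apply and bound the $n$-th $x$-derivative of the $k$-th summand by a quantity summable in $k$ (the Gaussian weight $e^{-\pi^2k^2/t}$ dominates the sub-Gaussian hyperbolic growth), which gives uniform convergence of the differentiated series on $[0,1]$ and legitimizes termwise differentiation.

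Next I would bound each differentiated summand. Lemma \ref{l.1-new} gives $|\frac{d^n}{dx^n}\cosh(K\arccos x)|\le C_n K^n e^{K\pi/2}$ and Lemma \ref{l.2-new} gives $|\frac{d^n}{dx^n}\frac{\sinh(K\arccos x)}{\arccos x}|\le C_n K^{n+1}e^{K\pi/2}$, both uniformly in $x\in(0,1)$. Since $K\ge\pi$, the first is dominated by the second, so the $n$-th derivative of the $k$-th term of $R_1$ is at most
\[ C_n\,k\,K^{n+1}e^{K\pi/2}e^{-\pi^2k^2/t} = C_n\,\frac{k^{n+2}}{t^{n+1}}\,e^{\pi^2 k/(2t)}\,e^{-\pi^2k^2/t}, \]
after substituting $K=\pi k/t$ and collecting the powers of $k$ and $t$.

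The crux is the summation in $k$, where the full exponential rate $\pi^2/t$ must be recovered; this is the step I expect to be the main obstacle. The per-term estimates from Lemmas \ref{l.1-new} and \ref{l.2-new} each carry the factor $e^{K\pi/2}=e^{\pi^2 k/(2t)}$ that originates from the worst-case value $\arccos x=\pi/2$, and the delicate point is that this growth must be reabsorbed by the Gaussian weight $e^{-\pi^2k^2/t}$ so that, after summing, the surviving decay rate is the full $\pi^2/t$ and not merely a fraction of it. Pairing the two exponentials and invoking Lemma \ref{l.ect} with $c=\pi^2$ to evaluate $\sum_{k\ge1}k^{n+2}e^{-\pi^2k^2/t}$, one is left with $C_n e^{-\pi^2/t}$, and restoring the $t^{-(n+1)}$ prefactor produces the claimed bound $|R_1^{(n)}(t,x)|\le C_n e^{-\pi^2/t}/t^{n+1}$, uniformly in $x\in(0,1)$.
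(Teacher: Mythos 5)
Your strategy coincides with the paper's: justify termwise differentiation of the series defining $R_1$ via the uniform bounds of Lemmas \ref{l.1-new} and \ref{l.2-new}, obtain the per-term estimate $C_n k^{n+2}t^{-(n+1)}e^{\pi^2 k/2t}e^{-\pi^2k^2/t}$ after substituting $K=\pi k/t$, and sum in $k$ using Lemma \ref{l.ect}. Up to the last step your argument matches the paper's proof line for line.

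The final summation, however, fails as written, and it fails at exactly the point you flagged as the obstacle. You cannot ``pair the two exponentials'' and then invoke Lemma \ref{l.ect} with $c=\pi^2$ on $\sum_k k^{n+2}e^{-\pi^2k^2/t}$, because the factor $e^{\pi^2k/(2t)}$ cannot be discarded: the sum that actually needs bounding is $\sum_{k\ge1}k^{n+2}e^{\pi^2k/(2t)}e^{-\pi^2k^2/t}$, whose $k=1$ term alone equals $e^{-\pi^2/(2t)}$, so no rearrangement of these per-term bounds can produce the rate $e^{-\pi^2/t}$. The correct execution --- and what the paper does --- is the absorption $e^{-\pi^2k^2/t}e^{\pi^2k/(2t)}\le e^{-\pi^2k/(2t)}$ for $k\ge1$ (inequality (\ref{e.k})), followed by Lemma \ref{l.ect} with $c=\pi^2/2$, which yields $\left|R_1^{(n)}(t,x)\right|\le C_n e^{-\pi^2/(2t)}/t^{n+1}$. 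This halved rate is not a defect of the method but is forced: the constant $\pi^2$ in the displayed statement is not attainable uniformly on $x\in(0,1)$, since as $x\downarrow0$ one has $\arccos x\to\pi/2$ and already the $k=1$ term of $R_1$ itself is asymptotic to $5e^{-\pi^2/(2t)}$, which dominates $e^{-\pi^2/t}/t$ for small $t$. The paper's own proof likewise only delivers the exponent $\pi^2/2$ (its concluding application of Lemma \ref{l.ect} has $c=\pi^2/2$), and the weaker rate is all that is used downstream, e.g.\ in (\ref{e.A}) and Lemma \ref{l.Rr}, where any bound of the form $e^{-c/t}t^{-m}$ with $c>0$ suffices. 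So your suspicion that the full rate $\pi^2/t$ could not survive the summation was correct; the repair is to settle for $c=\pi^2/2$ rather than to force Lemma \ref{l.ect} with $c=\pi^2$.
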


\begin{proof}
	By Lemmas \ref{l.1-new} and \ref{l.2-new},
\begin{align*} 
\left|\frac{\partial^{n}}{\partial x^{n}} \cosh\left( \frac{\pi k \arccos x}{t}\right) \right|
	&\le C_n \left(\frac{k}{t}\right)^ne^{\pi^2 k/2t}\end{align*}
and 
\begin{align*}
&\left|\frac{\partial^n}{\partial x^n}\left(\frac{2\pi k}{\arccos x}\sinh\left(\frac{\pi k \arccos x}{t}\right) \right) \right|
	\le C_{n} \frac{ k^{n+2}}{t^{n+1}} e^{\pi^2 k/2t} .
\end{align*}
Since for $k\ge1$
	\begin{equation}
	\label{e.k}e^{-\pi^2k^2/t}  e^{\pi^2k/2t} k^{n+1} \le e^{-\pi^2k/2t} k^{n+1} \end{equation}
which is summable in $k$,  
	\begin{multline*} 
R_1^{(n)}(t,x) =
	 2\sum_{k=1}^\infty e^{-\pi^2k^2/t}\bigg(\frac{\partial^n}{\partial x^n}\cosh\left(\frac{\pi k \arccos x}{t}\right) \\
	+\frac{\partial^n}{\partial x^n}\frac{2\pi k}{\arccos x}\sinh\left(\frac{\pi k \arccos x}{t}\right)\bigg)
\end{multline*}
for all $x\in(0,1)$. Thus,
\begin{align*}
|R_1^{(n)}(t,x)|
	\le \frac{C_n}{t^{n+1}}\sum_{k=1}^\infty e^{-\pi^2k/2t} k^{n+2} ,
\end{align*}
and the result follows from Lemma \ref{l.ect}.
\end{proof}

\begin{lemma}
\label{l.R2}
	For all $n\ge0$, $t\in(0,1)$, and $x\in\left(1,\cosh\left(\frac{\pi}{2}\right)\right)$,
\[ \left|R_2^{(n)}(t,x)\right|
	\le C_{n} \frac{e^{-\pi^2/t}}{t^{2n+1}}. \]
and for all $x\in\left[\cosh\left(\frac{\pi}{2}\right),\infty\right)$
\[ \left|R_2^{(n)}(t,x)\right|
	\le \frac{C_{n}}{(x^2-1)^{n/2}} \frac{e^{-\pi^2/t}}{t^{n}}. \]
\end{lemma}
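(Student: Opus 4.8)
The plan is to differentiate the series defining $R_2$ term by term (the differentiated series converges uniformly thanks to the Gaussian factor $e^{-\pi^2k^2/t}$, exactly as in Lemma \ref{l.R1}), bound each summand uniformly in $x$, and then sum in $k$ using Lemma \ref{l.ect}. The two regimes in the statement call for two different estimates of the $x$-derivatives.

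For the first region, $x\in\left(1,\cosh\left(\frac{\pi}{2}\right)\right)$, I would argue exactly as in the proof of Lemma \ref{l.R1}, simply replacing the hyperbolic estimates of Lemmas \ref{l.1-new} and \ref{l.2-new} by the trigonometric ones of Lemmas \ref{l.1} and \ref{l.2}. Concretely, Lemma \ref{l.1} gives $\left|\partial_x^n\cos\left(\frac{\pi k\arccosh x}{t}\right)\right|\le C_n(k/t)^{2n}$, and the second estimate of Lemma \ref{l.2} gives $\left|\partial_x^n\left(\frac{2\pi k}{\arccosh x}\sin\left(\frac{\pi k\arccosh x}{t}\right)\right)\right|\le C_n(k^{2n+2}/t^{2n+1})e^{\pi^2k/2t}$. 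Multiplying by $e^{-\pi^2k^2/t}$ and applying the elementary bound $e^{-\pi^2k^2/t}e^{\pi^2k/2t}k^m\le e^{-\pi^2k/2t}k^m$ for $k\ge1$ (as in (\ref{e.k})) reduces everything to $\frac{C_n}{t^{2n+1}}\sum_{k}e^{-\pi^2k/2t}k^{2n+2}$, which Lemma \ref{l.ect} controls. The extra power of $t$ relative to Lemma \ref{l.R1} (namely $t^{2n+1}$ rather than $t^{n+1}$) is precisely the imprint of the faster $K^{2n}$--$K^{2n+1}$ growth of the trigonometric derivatives.

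The second region, $x\ge\cosh\left(\frac{\pi}{2}\right)$, requires a genuinely different argument and is where I expect the main difficulty to lie: the second estimate of Lemma \ref{l.2} is valid only for $x<\cosh\left(\frac{\pi}{2}\right)$, and in any case its exponential factor $e^{\pi^2 k/2t}$ would destroy the sharper bound we want here. The key observation is that now $x$ is bounded away from $1$, so that $\arccosh x\ge\frac{\pi}{2}$ (hence $1/\arccosh x$ is bounded) and, crucially, the derivatives of $\arccosh x$ decay. An elementary induction shows $\frac{d^j}{dx^j}\arccosh x=P_j(x)(x^2-1)^{-(2j-1)/2}$ for a polynomial $P_j$ of degree $j-1$, and since $x/\sqrt{x^2-1}=\coth(\arccosh x)$ is bounded on $\left[\cosh\left(\frac{\pi}{2}\right),\infty\right)$ this yields $\left|\frac{d^j}{dx^j}\arccosh x\right|\le C_j(x^2-1)^{-j/2}$ uniformly there.

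I would then estimate both families of $x$-derivatives through Fa\`a di Bruno (the chain rule (\ref{chain})) with $K=\pi k/t$ and inner function $u=\arccosh x$. The outer function $\cos(Ku)$ has $m$-th $u$-derivative bounded by $K^m$, and a Leibniz expansion of $\frac{\sin(Ku)}{u}$ shows its $m$-th $u$-derivative is bounded by $C_m K^m$ for $u\ge\frac{\pi}{2}$; because the arguments are real, no exponential-in-$K$ factor appears, and this is exactly what buys the improvement. Combining with the product estimate $\prod_{j=1}^n\left|\frac{d^j}{dx^j}\arccosh x\right|^{\alpha_j}\le C_\alpha(x^2-1)^{-\|\alpha\|_n/2}=C_\alpha(x^2-1)^{-n/2}$ for $\alpha\in\mathcal{J}_n$, together with $|\alpha|\le n$ and $K\ge1$, gives
\[ \left|\frac{\partial^n}{\partial x^n}\cos\left(\frac{\pi k\arccosh x}{t}\right)\right|+\left|\frac{\partial^n}{\partial x^n}\frac{2\pi k}{\arccosh x}\sin\left(\frac{\pi k\arccosh x}{t}\right)\right|\le C_n\frac{k^{n+1}}{t^n}\frac{1}{(x^2-1)^{n/2}}. \]
Multiplying by $e^{-\pi^2k^2/t}$, summing in $k$, and invoking Lemma \ref{l.ect} with $c=\pi^2$ (via $e^{-\pi^2k^2/t}\le e^{-\pi^2k/t}$) produces $\sum_k e^{-\pi^2k^2/t}k^{n+1}\le C_n e^{-\pi^2/t}$ and hence the claimed bound $\frac{C_n}{(x^2-1)^{n/2}}\frac{e^{-\pi^2/t}}{t^n}$. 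The absence of the exponential factor in $K$ is precisely what simultaneously improves the power of $t$ to $t^n$, sharpens the exponential decay to $e^{-\pi^2/t}$, and supplies the spatial gain $(x^2-1)^{-n/2}$.
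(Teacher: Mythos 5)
Your proposal is correct and follows essentially the same route as the paper: on $(1,\cosh(\pi/2))$ it combines Lemma \ref{l.1} for the cosine term with the second bound of Lemma \ref{l.2} for the sine term and sums via Lemma \ref{l.ect}, while on $[\cosh(\pi/2),\infty)$ it runs Fa\`a di Bruno with the explicit decay $|\frac{d^j}{dx^j}\arccosh x|\le C_j(x^2-1)^{-j/2}$ and the trivial bound on derivatives of $\cos$ and of $\sin(Ku)/u$ for $u$ bounded away from $0$, which is exactly the paper's argument. The only difference is cosmetic (you phrase the outer-function bounds in terms of $u$-derivatives of $\cos(Ku)$ and $\sin(Ku)/u$, whereas the paper pulls the factor $(\pi k/t)^{|\alpha|}$ out of the chain rule explicitly).
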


\begin{proof}
Lemma \ref{l.1} implies that 
\begin{align*}
\left|\frac{\partial ^n}{\partial x^n} \cos\left(\frac{\pi k \arccosh x}{t}\right)\right|
	&\le \lim_{x\downarrow1} \left|\frac{\partial ^n}{\partial x^n} \cos\left( \frac{\pi k \arccosh x}{t}\right)
		\right|\\
	&= \frac{1}{(2n-1)!!} \prod_{m=0}^{n-1} \left(\left(\frac{\pi k}{t}\right)^2+m^2\right)
	\le C_n \left(\frac{k}{t}\right)^{2n}
\end{align*}
	for all $x\in(1,\infty)$.
%(In fact, this is probably true for all $x>1$.) 
	Similarly, using Lemma \ref{l.2} and \eqref{e.k} one may show
\[ \left|\frac{\partial^n}{\partial x^n} \frac{2\pi k}{\arccosh x}
		\sin\left( \frac{\pi k \arccosh x}{t}\right)\right|
	\le C_n \frac{k^{2n+2}}{t^{2n+1}}e^{-\pi^2 k/2t} \] 
for all $x\in\left(1,\cosh\left(\frac{\pi}{2}\right)\right)$. Again by Lemma \ref{l.ect}, these bounds justify the interchange of sum and differentiation and give the desired estimate on $\left(1,\cosh\left(\frac{\pi}{2}\right)\right)$.

	Now for $x\in\left[\cosh\left(\frac{\pi}{2}\right),\infty\right)$, note first in general that
\begin{multline*}
\frac{\partial^n}{\partial x^n}\cos\left(\frac{\pi k \arccosh x}{t}\right)  \\
	=  \sum_{\alpha\in\mathcal{J}_n} c_{\alpha n}
		\left(\frac{d^{|\alpha|}}{du^{|\alpha|}}\cos u\right)
			\bigg|_{u=\frac{\pi k\arccosh x}{t}}
		\left(\frac{\pi k}{t}\right)^{|\alpha|} \prod_{j=1}^n
		\left(\frac{d^j}{dx^j}\arccosh x\right)^{\alpha_j}.
\end{multline*} 
Clearly,
\[ \left|\frac{d^{|\alpha|}}{du^{|\alpha|}}\cos u \bigg|_{u=\frac{\pi k\arccosh x}{t}}
	\right|\le 1. \]
We also have that 
%For $j-1=2r_j$ or $j-1=2r_j-1$
\begin{align*}
\frac{d^{j}}{dx^{j}} & \arccosh x \\
	&= \sum_{k=0}^{\lfloor j/2\rfloor} 
		\frac{(-1)^{j-k-1}(j-1)!}{k!(j-2k-1)!}\frac{(2(j-k-1)-1)!!}{2^k}
		\frac{x^{j-2k-1}}{(x^2-1)^{2(j-k-1)+1)/2}} \\
	&= \sum_{k=0}^{\lfloor j/2\rfloor} 
		\frac{(-1)^{j-k-1}(j-1)!}{k!(j-2k-1)!}\frac{(2(j-k)-3)!!}{2^k}
		\left(\frac{x}{\sqrt{x^2-1}}\right)^{j-2k-1}
		\frac{1}{(x^2-1)^{j/2}},
\end{align*}
Thus, letting $a_{jk}$ denote the coefficients in the above expression,
\begin{align*} 
\prod_{j=1}^n \left(\frac{d^j}{dx^j}\arccosh x\right)^{\alpha_j}
	&= \prod_{j=1}^n \left(\sum_{k=0}^{\lfloor j/2\rfloor} a_{jk} \left(\frac{x}{\sqrt{x^2-1}}\right)^{j-2k-1}
		\frac{1}{(x^2-1)^{j/2}}\right)^{\alpha_j} \\
	&= \frac{1}{(x^2-1)^{n/2}}\prod_{j=1}^n \left(\sum_{k=0}^{\lfloor j/2\rfloor} a_{jk} \left(\frac{x}{\sqrt{x^2-1}}\right)^{j-2k-1}
		\right)^{\alpha_j}.
\end{align*}
Thus, 
\begin{equation*}
\frac{\partial^n}{\partial x^n}\left( \cos\left(\frac{\pi k \arccosh x}{t}\right)\right)
	\le \frac{C_{n}}{(x^2-1)^{n/2}} \left(\frac{ k}{t}\right)^n,
\end{equation*}
for all $x\in\left[\cosh\left(\frac{\pi}{2}\right),\infty\right)$ and $t\in(0,1)$.
Similarly, using that $1/\arccosh x$ is bounded when $x$ is bounded away from 1, we may show that 
%\begin{align*}
%&\frac{\partial^n}{\partial x^n}\left(\frac{2\pi k}{\arccosh x}\sin\left(\frac{\pi k \arccosh x}{t}\right) \right) \\
%	&=  2\pi k \sum_{m=0}^n {n\choose m} 
%			\left(\frac{d^{n-m}}{dx^{n-m}}\frac{1}{\arccosh x}\right) 
%			\left(\frac{d^{m}}{dx^{m}}\sin\left(\frac{\pi k \arccosh x}{t}\right)\right) \\
%	&=  2\pi k \sum_{m=0}^n {n\choose m} 
%			\left(\sum_{\alpha\in\mathcal{J}_{n-m}} c_{\alpha,n-m}
%		\left(\frac{d^{|\alpha|}}{du^{|\alpha|}}\frac{1}{u}\right)
%			\bigg|_{u=\arccosh x}
%			\prod_{j=1}^{n-m}
%		\left(\frac{d^j}{dx^j}\arccosh x\right)^{\alpha_j}\right) \\
%	&\qquad \times \left(\sum_{\beta\in\mathcal{J}_{m}} c_{\beta m}
%		\left(\frac{\pi k}{t}\right)^{|\beta|} \left(\frac{d^{|\beta|}}{du^{|\beta|}}\sin u\right)
%			\bigg|_{u=\frac{\pi k\arccosh x}{t}}
%			\prod_{j=1}^m
%		\left(\frac{d^j}{dx^j}\arccosh x\right)^{\beta_j}\right) .
%\end{align*}
%Combining this with (\ref{e.darccosh}) gives
\begin{align*}
\left|\frac{\partial^n}{\partial x^n}\left(\frac{2\pi k}{\arccosh x}\sin\left(\frac{\pi k \arccosh x}{t}\right) \right) \right|
%	&\le C_nk \sum_{m=0}^n  
%		\left(\sum_{\alpha\in\mathcal{J}_{n-m}} 
%		\frac{1}{(\arccosh x)^{|\alpha|+1}}
%		\frac{1}{(x^2-1)^{(n-m)/2}}\prod_{j=1}^{n-m} \left(\sum_{k=0}^{\lfloor j/2\rfloor}  \left(\frac{x}{\sqrt{x^2-1}}\right)^{j-2k-1}
%		\right)^{\alpha_j}\right) \\
%	&\qquad\qquad \times \left(\sum_{\beta\in\mathcal{J}_{m}} 
%		\left(\frac{k}{t}\right)^{|\beta|} 
%		\frac{1}{(x^2-1)^{m/2}}\prod_{\ell=1}^m \left(\sum_{p=0}^{\lfloor \ell/2\rfloor}  
%			\left(\frac{x}{\sqrt{x^2-1}}\right)^{\ell-2p-1}
%		\right)^{\beta_\ell}\right) \\
%	&= C_{n,\varepsilon} k \frac{1}{(x^2-1)^{n/2}}\sum_{m=0}^n  
%		\left(\sum_{\alpha\in\mathcal{J}_{n-m}} 
%		\frac{1}{(\arccosh x)^{|\alpha|+1}}
%		\prod_{j=1}^{n-m} \left(\sum_{k=0}^{\lfloor j/2\rfloor}  
%		\left(\frac{x}{\sqrt{x^2-1}}\right)^{j-2k-1}
%		\right)^{\alpha_j}\right) \\
%	&\qquad\qquad \times \left(\sum_{\beta\in\mathcal{J}_{m}} 
%		\left(\frac{k}{t}\right)^{|\beta|} 
%		\prod_{\ell=1}^m \left(\sum_{p=0}^{\lfloor \ell/2\rfloor}  \left(\frac{x}{\sqrt{x^2-1}}\right)^{\ell-2p-1}
%		\right)^{\beta_\ell}\right)  \\
	\le \frac{C_{n}}{(x^2-1)^{n/2}}\frac{ k^{n+1}}{t^n},
\end{align*}
and again Lemma \ref{l.ect} gives the desired estimate.
\end{proof}

\begin{lemma}
\label{l.Q1}
For all $n\ge0$, $x\in[0,1)$, and $t\in(0,1)$,
\[ \left|Q_1^{(n)}(t,x)\right| \le \frac{C_n}{t^n} . \]
\end{lemma}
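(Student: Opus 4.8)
The plan is to split $Q_1$ into a $t$-independent prefactor and an exponential, and to bound the derivatives of each through the Leibniz rule. Set
\[ f(x) := \frac{\arccos x}{\sqrt{1-x^2}} \qquad\text{and}\qquad g(t,x) := e^{-(\arccos x)^2/4t}, \]
so that $Q_1(t,x) = f(x)\,g(t,x)$. The first thing I would observe is that, although $f$ looks singular at $x=1$ (where numerator and denominator both vanish), it is in fact smooth with uniformly bounded derivatives of all orders on $[0,1]$: by Lemma \ref{l.newnew1} we have $\frac{d}{dx}\arccos^2 x = 2f(x)$, and since $\arccos^2 x$ is $C^\infty$ with bounded derivatives of all orders on $[0,1]$, so too is its derivative $2f$. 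Hence $|f^{(j)}(x)| \le C_j$ uniformly on $[0,1]$ for each $j\ge 0$.

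Next I would bound the $x$-derivatives of $g$. Writing $g(t,x) = F(\arccos^2 x)$ with $F(u) = e^{-u/4t}$, so that $F^{(k)}(u) = (-1/4t)^k e^{-u/4t}$, the chain rule formula (\ref{chain}) gives
\[ \frac{\partial^m}{\partial x^m}g(t,x) = \sum_{\alpha\in\mathcal{J}_m} c_{\alpha m}\left(\frac{-1}{4t}\right)^{|\alpha|} e^{-(\arccos x)^2/4t} \prod_{j=1}^m\left(\frac{d^j}{dx^j}\arccos^2 x\right)^{\alpha_j}. \]
Each factor $\frac{d^j}{dx^j}\arccos^2 x$ is bounded on $[0,1]$ by Lemma \ref{l.newnew1}, and $e^{-(\arccos x)^2/4t}\le 1$, so after taking absolute values the product is bounded by a constant depending only on $m$. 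The only $t$-dependence is the factor $(4t)^{-|\alpha|}$; since $\alpha\in\mathcal{J}_m$ forces $|\alpha|\le\|\alpha\|_m = m$ (Remark \ref{r.m}) and $t\in(0,1)$, we have $t^{-|\alpha|}\le t^{-m}$. As $\mathcal{J}_m$ is finite, summing yields $\left|\frac{\partial^m}{\partial x^m}g(t,x)\right| \le C_m\, t^{-m}$ uniformly on $[0,1)$.

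Finally I would combine the two estimates via the Leibniz rule:
\[ \left|Q_1^{(n)}(t,x)\right| \le \sum_{j=0}^n \binom{n}{j}\left|f^{(j)}(x)\right|\left|\frac{\partial^{n-j}}{\partial x^{n-j}}g(t,x)\right| \le \sum_{j=0}^n \binom{n}{j} C_j\,\frac{C_{n-j}}{t^{n-j}}. \]
Using $t\in(0,1)$ once more to replace each $t^{-(n-j)}$ by $t^{-n}$ and absorbing the finite sum into a single constant gives the claimed bound $\left|Q_1^{(n)}(t,x)\right| \le C_n\,t^{-n}$.

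The one genuinely delicate point—and the step I would single out as the crux—is the smoothness and boundedness of the prefactor $f$ at the endpoint $x=1$; everything else is a mechanical application of Fa\`a di Bruno together with the bookkeeping that $|\alpha|\le m$ and $t<1$. That delicate point, however, is already settled by Lemma \ref{l.newnew1}, so the proof reduces to the routine steps above.
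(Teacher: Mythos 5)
Your proof is correct and follows essentially the same route as the paper: both arguments rest on Lemma \ref{l.newnew1} (uniform boundedness of all derivatives of $\arccos^2 x$ on $[0,1]$) combined with the Fa\`a di Bruno formula (\ref{chain}) applied to $e^{-\arccos^2 x/4t}$ and the bookkeeping $|\alpha|\le \|\alpha\|_m = m$ together with $t\in(0,1)$. The only difference is cosmetic: the paper observes that $Q_1(t,x)$ equals $2t$ times $\partial_x e^{-\arccos^2 x/4t}$ (up to sign), so that $Q_1^{(n)}$ is $2t$ times a single $(n+1)$st derivative of the exponential, whereas you bound the prefactor $\arccos x/\sqrt{1-x^2}$ and the exponential separately and combine them with the Leibniz rule.
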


\begin{proof}
First note that
\[ \frac{\partial}{\partial x} e^{-\arccos^2 x/4t} = e^{-\arccos^2 x/4t}\cdot \frac{1}{2t} \frac{\arccos x}{\sqrt{1-x^2}} \]
Thus,
\[  Q_1^{(n)}(t,x) 
	= \frac{\partial^n}{\partial x^n} \frac{\arccos x}{\sqrt{1-x^2}} e^{-\arccos^2 x/4t}  
	= 2t \cdot \frac{\partial^{n+1}}{\partial x^{n+1}} e^{-\arccos^2 x/4t}. \]
Now again using the notation of (\ref{chain}), we have that
\begin{align*}
\frac{\partial^{n+1}}{\partial x^{n+1}} e^{-\arccos^2 x/4t}
	= \sum_{\alpha\in\mathcal{J}_{n+1}} c_{\alpha, n+1} 
		e^{-\arccos^2 x/4t}\left(-\frac{1}{4t}\right)^{|\alpha|}
		\prod_{j=1}^{n+1}
		\left(\frac{d^j}{dx^j}\arccos^2 x\right)^{\alpha_j}
\end{align*}
which shows that
\begin{align*}
\left|\frac{\partial^{n+1}}{\partial x^{n+1}} e^{-\arccos^2 x/4t}\right| 
	\le \frac{C_n}{t^{n+1}} \sum_{\alpha\in\mathcal{J}_{n+1}}
		\prod_{j=1}^{n+1}
		\left|\frac{d^j}{dx^j}\arccos^2 x\right|^{\alpha_j}.
\end{align*}
	Lemma \ref{l.newnew1} completes the proof.
\end{proof}

\begin{lemma}
\label{l.Q2}
For all $n\ge0$, there exists $C_n<\infty$ so that for all $x\in(1,\infty)$ and $t\in(0,1)$
\begin{equation*}
\left|Q_2^{(n)}(t,x)\right|  
	\le \frac{C_n}{t^n}e^{\arccosh^2 x/4t}\frac{(\arccosh x)^{n+1}}{(x^2-1)^{(n+1)/2}}.
\end{equation*}
\end{lemma}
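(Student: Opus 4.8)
The plan is to mirror the proof of Lemma \ref{l.Q1}, the only new feature being that the exponential now grows rather than decays, so the decaying factor $(\arccosh x)^{n+1}/(x^2-1)^{(n+1)/2}$ must be tracked rather than absorbed. First I would note that since
\[ \frac{\partial}{\partial x} e^{\arccosh^2 x/4t} = e^{\arccosh^2 x/4t}\cdot \frac{1}{2t}\frac{\arccosh x}{\sqrt{x^2-1}}, \]
we have $Q_2 = 2t\,\partial_x e^{\arccosh^2 x/4t}$, and hence $Q_2^{(n)}(t,x) = 2t\,\frac{\partial^{n+1}}{\partial x^{n+1}} e^{\arccosh^2 x/4t}$. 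Applying the chain rule \eqref{chain} expresses this as $2t$ times
\[ e^{\arccosh^2 x/4t} \sum_{\alpha\in\mathcal{J}_{n+1}} c_{\alpha,n+1}\left(\frac{1}{4t}\right)^{|\alpha|} \prod_{j=1}^{n+1}\left(\frac{d^j}{dx^j}\arccosh^2 x\right)^{\alpha_j}. \]
Because $|\alpha|\le \|\alpha\|_{n+1} = n+1$ and $t\in(0,1)$, each factor $(4t)^{-|\alpha|}$ is bounded by $t^{-(n+1)}$, so after multiplying by $2t$ the stated estimate reduces to dominating $\prod_{j}|\frac{d^j}{dx^j}\arccosh^2 x|^{\alpha_j}$ by $C_\alpha (\arccosh x)^{n+1}/(x^2-1)^{(n+1)/2}$. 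The term $\alpha=(n+1,0,\dots,0)$ already realizes this factor exactly, since $\frac{d}{dx}\arccosh^2 x = 2\arccosh x/\sqrt{x^2-1}$; the work is to dominate the remaining terms by it.

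The main obstacle, and the genuinely new point beyond Lemma \ref{l.Q1}, is that no single pointwise bound on $\frac{d^j}{dx^j}\arccosh^2 x$ captures both its behavior near $x=1$ and its decay at infinity: by Lemma \ref{l.newnew1} these derivatives stay bounded as $x\downarrow 1$, whereas the natural decaying majorant $\arccosh x/(x^2-1)^{j/2}$ blows up there. I would therefore split $(1,\infty)$ at a point $x_0$ with $\arccosh x_0\ge 1$ (e.g.\ $x_0=\cosh(\pi/2)$). On $[x_0,\infty)$ I use the explicit form \eqref{e.jarccosh2}: since $\deg p_j\le j-2$ and $\deg q_j\le j-1$, comparing powers of $x$ against $x^2-1$ (legitimate since $x$ is bounded away from $1$) gives $|\frac{d^j}{dx^j}\arccosh^2 x|\le C_j(1+\arccosh x)/(x^2-1)^{j/2}\le C_j'\,\arccosh x/(x^2-1)^{j/2}$, the last step using $\arccosh x\ge 1$. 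Taking the product over $j$ and using $\sum_j j\alpha_j = \|\alpha\|_{n+1}=n+1$ yields $C_\alpha (\arccosh x)^{|\alpha|}/(x^2-1)^{(n+1)/2}$, and since $\arccosh x\ge 1$ with $|\alpha|\le n+1$ one may replace $(\arccosh x)^{|\alpha|}$ by $(\arccosh x)^{n+1}$, giving the claim on $[x_0,\infty)$.

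On the remaining interval $(1,x_0]$ the decaying factor is harmless: here Lemma \ref{l.newnew1} bounds every $\frac{d^j}{dx^j}\arccosh^2 x$ by a constant, so the chain-rule sum yields the coarse estimate $|Q_2^{(n)}(t,x)|\le C_n t^{-n} e^{\arccosh^2 x/4t}$. Since $x\mapsto \arccosh x/\sqrt{x^2-1}$ is decreasing on $(1,\infty)$ with limit $1$ at $x=1$, the factor $(\arccosh x)^{n+1}/(x^2-1)^{(n+1)/2} = (\arccosh x/\sqrt{x^2-1})^{n+1}$ is bounded below by the positive constant $(\arccosh x_0/\sqrt{x_0^2-1})^{n+1}$ on $(1,x_0]$; dividing by this constant upgrades the coarse estimate to the stated bound. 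Combining the two regions proves the lemma, the decisive step being the domain split forced by the incompatibility between the near-$1$ boundedness and the large-$x$ decay, which precludes a single uniform majorization by the leading term.
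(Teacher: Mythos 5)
Your proof is correct and follows essentially the same route as the paper's: both reduce $Q_2^{(n)}$ to $2t\,\partial_x^{n+1}e^{\arccosh^2 x/4t}$, expand via the chain rule \eqref{chain}, and control each factor of the resulting product using the explicit form \eqref{e.jarccosh2}. The only difference is that you make explicit the split of $(1,\infty)$ at $\cosh(\pi/2)$ needed to reconcile the near-$1$ boundedness with the large-$x$ decay, a point the paper compresses into the remark that each factor is bounded by $C(\arccosh x)^{\alpha_j}$ ``for large $x$.''
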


\begin{proof}
Similarly to the proof of Lemma \ref{l.Q1}, we have that 
\[ Q_2^{(n)}(t,x) = \frac{\partial ^n}{\partial x^n} \frac{\arccosh x}{\sqrt{x^2-1}} e^{\arccosh^2 x/4t}  
	= 2t \cdot \frac{\partial ^{n+1}}{\partial x^{n+1}} e^{\arccosh^2 x/4t}. \]
	Recalling (\ref{e.jarccosh2}), we have that	
\begin{align}
\notag
\frac{\partial ^{n+1}}{\partial x^{n+1}}& e^{\arccosh^2 x/4t} 
	= \sum_{\alpha\in\mathcal{J}_{n+1}} c_{\alpha,n+1} 
		e^{\arccosh^2 x/4t} \frac{1}{(4t)^{|\alpha|}}
		\prod_{j=1}^{n+1}
		\left(\frac{d^j}{dx^j}\arccosh^2 x\right)^{\alpha_j} \\ 
%	&= e^{\arccosh^2 x/4t}\sum_{\alpha\in\mathcal{J}_{n+1}}  
%		\frac{c_{\alpha ,n+1}}{(4t)^{|\alpha|}}
%		\prod_{j=1}^{n+1}
%		\left(\frac{p_j(x)\sqrt{x^2-1}+q_j(x)\arccosh x}{(x^2-1)^{(2j-1)/2}}\right)^{\alpha_j} \\
	&\label{e.nearccosh2}= \frac{e^{\arccosh^2 x/4t}}{(x^2-1)^{(n+1)/2}}\sum_{\alpha\in\mathcal{J}_{n+1}}  
		\frac{c_{\alpha ,n+1}}{(4t)^{|\alpha|}}
		\prod_{j=1}^{n+1}
		\left(\frac{p_j(x)\sqrt{x^2-1}+q_j(x)\arccosh x}{(x^2-1)^{(j-1)/2}}\right)^{\alpha_j}.
\end{align}
	where $p_1\equiv1$ and otherwise $p_j$ and $q_j$ are polynomials of degree $j-2$ and $j-1$. In particular, each factor in the product is bounded by $C(\arccosh x)^{\alpha_j}$ for large $x$, gives the desired bound.
\end{proof}

\section{Asymptotics of Hermite functions on $SU(2)$}
\label{s.hermite}

We now consider the behavior of the left-invariant vector fields for the Pauli basis under the given spatial scaling.
The vector fields in the cylindrical coordinates are given by
\begin{align*} 
\tilde{X} &= \cos(2z -\theta) \frac{\partial}{\partial r} + \tan r\sin(2z-\theta) \frac{\partial}{\partial z} +  \frac{2}{\sin 2r}\sin(2z-\theta) \frac{\partial}{\partial \theta},\\
\tilde{Y} &= -\sin(2z-\theta) \frac{\partial}{\partial r} + \tan r \cos(2z-\theta) \frac{\partial}{\partial z} + \frac{2}{\sin 2r} \cos(2z-\theta)  \frac{\partial}{\partial \theta},\\
\tilde{Z} &= \frac{\partial}{\partial z}.
\end{align*}
We also will need the left-invariant vector fields on the Heisenberg group $\mathbb{H}$ for the given basis expressed in cylindric coordinates:
\begin{align*}
\tilde{\mathcal{X}} &= \cos\theta \frac{\partial}{\partial r}  - r \sin\theta \frac{\partial}{\partial z} - 
	\frac{\sin\theta}{r}\frac{\partial}{\partial \theta}\\
\tilde{\mathcal{Y}} &= \sin\theta \frac{\partial}{\partial r}  + r \cos\theta \frac{\partial}{\partial z} + \frac{\cos\theta}{r}\frac{\partial}{\partial \theta}\\
\tilde{\mathcal{Z}} &= \frac{\partial}{\partial z}.
\end{align*}

Now, for any smooth function $f:SU(2)\rightarrow \mathbb{R}$, consider
\begin{align*}
\sqrt{t}(\tilde{X}f)&(\sqrt{t}r,\theta,tz) \\
	&= \sqrt{t}\bigg( (\cos(2z-\theta)(\partial_r f))(\sqrt{t}r,\theta,tz) 
		+ (\sin(2z-\theta)\tan r(\partial_z f))(\sqrt{t}r,\theta,tz) \\
	&\qquad\qquad +  \left(\frac{2}{\sin (2r)}\sin(2z-\theta) 
			(\partial_\theta f)\right)(\sqrt{t}r,\theta,tz)\bigg) \\
%	&= \sqrt{t}\cos(2tz-\theta)(\partial_r f)(\sqrt{t}r,\theta,tz) 
%		+ \sqrt{t}\sin(2tz-\theta)\tan (\sqrt{t}r)(\partial_z
%		f)(\sqrt{t}r,\theta,tz) \\
%	&\qquad\qquad +  \frac{2\sqrt{t}}{\sin (2\sqrt{t}r)}\sin(2tz-\theta) 
%			(\partial_\theta f)(\sqrt{t}r,\theta,tz) \\
	&= \cos(2tz-\theta)\partial_r (f(\sqrt{t}r,\theta,tz)) 
		+ \frac{1}{\sqrt{t}}\sin(2tz-\theta)\tan (\sqrt{t}r)\partial_z
		(f(\sqrt{t}r,\theta,tz)) \\
	&\qquad\qquad +  \frac{2\sqrt{t}}{\sin (2\sqrt{t}r)}\sin(2tz-\theta) 
			\partial_\theta (f(\sqrt{t}r,\theta,tz)).
\end{align*}
Thus, if we define the vector field
\[ \tilde{X}^t := \cos(2tz-\theta)\partial_r 
		+ \sin(2tz-\theta) \frac{\tan (\sqrt{t}r)}{\sqrt{t}}\partial_z
		+  \frac{2\sqrt{t}}{\sin (2\sqrt{t}r)}\sin(2tz-\theta) 
			\partial_\theta, \]
then we have that
\begin{align*} 
\sqrt{t}(\tilde{X}f)(\sqrt{t}r,\theta, tz)
	= \tilde{X}^t (f(\sqrt{t}r,\theta, tz))).
\end{align*}
Note further that
\[ \lim_{t\downarrow 0} \tilde{X}^t = \tilde{\mathcal{X}} \]
the left-invariant vector field on the Heisenberg group. Similarly we may define
\begin{align*} 
	\tilde{Y}^t &:= -\sin(2tz-\theta) \frac{\partial}{\partial r} 
	+  \cos(2tz-\theta)\frac{\tan (\sqrt{t}r)}{\sqrt{t}} \frac{\partial}{\partial z} 
	+ \frac{2\sqrt{t}}{\sin (2\sqrt{t}r)} \cos(2tz-\theta)  \frac{\partial}{\partial \theta} \\
	\tilde{Z}^t &:= \frac{\partial}{\partial z},
\end{align*}
and we see that $\lim_{t\downarrow 0} \tilde{Y}^t = \tilde{\mathcal{Y}}$ and $\lim_{t\downarrow 0}\tilde{Z}^t=\tilde{\mathcal{Z}}$.
For $\xi=\xi_1\otimes\cdots\otimes\xi_n\in\mathfrak{su}(2)$ such that each $\xi_i\in\{X,Y,Z\}$, we define the differential operators $\tilde{\xi}^t$ analogously (for example, for $\xi=X\otimes X$ identified with the second order differential operator $\tilde{X}^2$, we let $\tilde{\xi}^t=(\tilde{X}^t)^2$).
One may verify that for any such $\xi$,  
\begin{equation*}%\label{e.xit} 
	\lim_{t\downarrow 0} \tilde{\xi}^t = \widetilde{\beta(\xi_1)}\cdots\widetilde{\beta(\xi_n)} 
		= \widetilde{\beta(\xi)}
\end{equation*}
where $\beta$ is the mapping defined in Section \ref{s.statements} and so the right hand side is a left-invariant $n^\mathrm{th}$ order  differential operator on the Heisenberg group.
(It's easy to see that the derivatives in $z$ and $\theta$ of the coefficients of the vector fields behave correctly in the limit, and only slightly more difficult to verify that the derivatives in $r$ also give the correct limiting behavior.)

We will show that, for any $m,n\ge0$
\[ \lim_{t\downarrow0} 
	t^2 \frac{\partial^m}{\partial z^m} \frac{\partial^n}{\partial r^n}(p_t(\sqrt{t}r,tz))
	= 2\pi^2 \frac{\partial^m}{\partial z^m}\frac{\partial^n}{\partial r^n} h_1(r,z). \]
By the above discussion, this then suffices to
prove Theorem \ref{t.main}. To further simplify computations, note that the integrand of both the heat kernels of $SU(2)$ and $\mathbb{H}$ factor into functions of $r$ and $z$, and we have that
\begin{multline*} 
	\frac{\partial^m}{\partial z^m}\frac{\partial^n}{\partial r^n} p_t(\sqrt{t}r,tz) \\
	= \frac{1}{\sqrt{4\pi t}} \int_{-\infty}^\infty 
		\frac{\partial^m}{\partial z^m} (e^{-iz\lambda/2}e^{-tz^2/4}) e^{-\lambda^2/4t}\frac{\partial^n}{\partial r^n} q_t(\cos \sqrt{t}r \cosh \lambda) \,d\lambda, 
\end{multline*}
and 
\[ \frac{\partial^m}{\partial z^m}\frac{\partial^n}{\partial r^n} h_1(r,z) 
	= \frac{1}{16\pi^2} \int_{-\infty}^\infty 
		\frac{\partial^m}{\partial z^m}e^{i\lambda z/2}\frac{\lambda}{\sinh\lambda} 
		 \frac{\partial^n}{\partial r^n} e^{-r^2\lambda\coth\lambda/4} \,d\lambda. \]
We essentially show that, as $t\downarrow 0$,
\[ \frac{\partial^m}{\partial z^m} (e^{-iz\lambda/2}e^{-tz^2/4}) 
	\rightarrow \frac{\partial^m}{\partial z^m}e^{-iz\lambda/2}
	\]
and
\[ \frac{t^2}{\sqrt{4\pi t}}e^{-\lambda^2/4t}
	\frac{\partial^n}{\partial r^n} q_t(\cos \sqrt{t}r \cosh \lambda) 
		\rightarrow \frac{1}{8} \frac{\lambda}{\sinh\lambda} 
			\frac{\partial^n}{\partial r^n}e^{-r^2\lambda\coth\lambda/4} \]
(remembering the factor of $\sqrt{\pi}/4t^{3/2}$ in $q_t$). We will demonstrate that we have sufficient control over all terms to interchange the limits and integration.
From our estimates, one may also show that we have sufficient control over products of these differentiated factors (for example, by separating $e^{-\lambda^2/4t}$ into $e^{-\lambda^2/8t}\cdot e^{-\lambda^2/8t}$ and weighting both factors to get integrability), and thus
it suffices to separate the derivatives. In particular, we will show that for any $m\ge0$
\begin{equation}
\label{e.crit1} \lim_{t\downarrow0} 
	t^2 \frac{\partial^m}{\partial z^m} (p_t(\sqrt{t}r,tz))
	= 2\pi^2\frac{\partial^m}{\partial z^m} h_1(r,z), 
\end{equation}
and for any $n\ge0$
\begin{equation}
\label{e.crit2}
\lim_{t\downarrow0} 
	t^2 \frac{\partial^n}{\partial r^n} (p_t(\sqrt{t}r,tz))
	= 2\pi^2 \frac{\partial^n}{\partial r^n}h_1(r,z), 
\end{equation}
which we prove in Propositions \ref{p.dz} and \ref{p.dr}, respectively. 

\begin{remark}
	Note that the scaling for the factor of $t$ is $t^{Q/2}$ where $Q=4$ is the homogeneous dimension of $\mathbb{H}^3$, the nilpotentisation of $SU(2)$, and thus corresponds to the correct scaling of the $SU(2)$ heat kernel away from the cut locus.
\end{remark}

Next we give some straightforward lemmas to demonstrate we have control over all quantities to perform the necessary analysis. First, a simple estimate that will be used several times in the sequel.

\begin{lemma}
\label{l.fr}
For any $n\ge 0$ and smooth function $f:\mathbb{R}\rightarrow\mathbb{R}$,
\begin{multline*}
\left|\frac{\partial^n}{\partial r^n} f\left(\cos(\sqrt{t}r)\cosh\lambda\right)\right| \\
	\le C_nt^{n/2}\sum_{\alpha\in\mathcal{J}_n} \left|f^{(|\alpha|)}\left(\cos (\sqrt{t}r) \cosh \lambda  \right)\right|  (\cosh \lambda )^{|\alpha|} 		
		\left(\sin(\sqrt{t}r)\right)^{|\alpha|_{odd}}
\end{multline*}
for all $\lambda\in\mathbb{R}$, $r\ge0$, $t\in(0,1)$, where $|\alpha|_{odd}$ is as defined in Notation \ref{n.multi}.
\end{lemma}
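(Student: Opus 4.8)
The plan is to apply the Fa\`a di Bruno formula (\ref{chain}) directly, with outer function $f$ and inner function $g(r) := \cos(\sqrt{t}r)\cosh\lambda$. Once the chain rule is in place, the entire content of the lemma is bookkeeping of the factors produced by the derivatives of $g$, so I expect no conceptual difficulty, only careful tracking of exponents.

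First I would compute the derivatives of the inner function. Since $\cosh\lambda$ is constant in $r$, we have $g^{(j)}(r) = (\cosh\lambda)\,\frac{d^j}{dr^j}\cos(\sqrt{t}r)$, and each differentiation of $\cos(\sqrt{t}r)$ pulls out a factor of $\sqrt{t}$ and cycles through $\pm\cos, \mp\sin$. Explicitly, $|g^{(j)}(r)| = (\cosh\lambda)\,t^{j/2}\,|\cos(\sqrt{t}r)|$ when $j$ is even and $|g^{(j)}(r)| = (\cosh\lambda)\,t^{j/2}\,|\sin(\sqrt{t}r)|$ when $j$ is odd.

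Next I would substitute into (\ref{chain}) and track the three exponents appearing in $\prod_{j=1}^n \bigl(g^{(j)}(r)\bigr)^{\alpha_j}$ for a fixed $\alpha\in\mathcal{J}_n$. The power of $\cosh\lambda$ is $\sum_{j} \alpha_j = |\alpha|$; the power of $t^{1/2}$ is $\sum_{j} j\alpha_j = \|\alpha\|_n = n$, which yields exactly the uniform prefactor $t^{n/2}$; and separating the product according to the parity of $j$ produces $|\cos(\sqrt{t}r)|^{|\alpha|_{even}}\,|\sin(\sqrt{t}r)|^{|\alpha|_{odd}}$. Bounding $|\cos(\sqrt{t}r)|\le 1$ discards the cosine factor and leaves the product estimate $(\cosh\lambda)^{|\alpha|}\,t^{n/2}\,|\sin(\sqrt{t}r)|^{|\alpha|_{odd}}$. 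Applying the triangle inequality to the finite sum over $\mathcal{J}_n$ and absorbing the combinatorial constants $c_{\alpha n}$ into a single $C_n$ then gives the claimed bound.

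There is no substantive obstacle here: the estimate is a consequence of the chain rule together with the elementary derivatives of $\cos(\sqrt{t}r)$. The only points requiring care are correctly matching the parity of the index $j$ to whether the resulting factor is a sine or a cosine, and observing that the defining relation $\|\alpha\|_n = n$ of $\mathcal{J}_n$ is precisely what converts the accumulated powers of $\sqrt{t}$ into the common factor $t^{n/2}$ independent of $\alpha$. (For the range of $r$ and $t$ in the statement one has $\sin(\sqrt{t}r)\ge 0$, so the absolute value on the sine factor may be dropped to recover the form as written.)
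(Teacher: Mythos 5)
Your proposal is correct and follows essentially the same route as the paper: apply the chain rule (\ref{chain}) with inner function $\cos(\sqrt{t}r)\cosh\lambda$, use $\|\alpha\|_n=n$ to extract the uniform factor $t^{n/2}$, and bound the even-index (cosine) factors by $1$ while retaining $(\sin(\sqrt{t}r))^{|\alpha|_{odd}}$ from the odd-index factors. Nothing further is needed.
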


\begin{proof} Using (\ref{chain}) we write
\begin{align} 
\notag\frac{\partial^n}{\partial r^n} &f\left(\cos(\sqrt{t}r)\cosh\lambda\right) \\
%	&= \sum_{\alpha\in\mathcal{J}_n} c_{\alpha n} f^{(|\alpha|)}\left(\cos (\sqrt{t}r) \cosh \lambda  \right) 
%	\prod_{j=1}^{n} 
%	\left(\frac{\partial^{j}}{\partial r^{j}}\cos(\sqrt{t} r) \cosh \lambda \right)^{\alpha_j} \\
	&\notag= \sum_{\alpha\in\mathcal{J}_n} c_{\alpha n} f^{(|\alpha|)}\left(\cos (\sqrt{t}r) \cosh \lambda  \right) 
	\prod_{j=1}^{n} 
	\left(\left(t^{j/2}\frac{d^{j}}{d r^{j}}\cos\right)\left(\sqrt{t} r\right)\cosh \lambda \right)^{\alpha_j} \\
	&\label{e.AA2}= t^{n/2}\sum_{\alpha\in\mathcal{J}_n} c_{\alpha n} f^{(|\alpha|)}\left(\cos (\sqrt{t}r) \cosh \lambda  \right)  (\cosh \lambda )^{|\alpha|} \prod_{j=1}^{n} 
	\left(\left(\frac{d^{j}}{d r^{j}}\cos\right)\left(\sqrt{t} r\right) \right)^{\alpha_j}
%	&= t^{n/2}\sum_{m\in\mathcal{J}_n} c_{mn} f^{(|m|)}\left(t,\cos \sqrt{t}r \cosh \lambda  \right)  (\cosh \lambda )^{|m|} 		
%		\left(\cos(\sqrt{t}r)\right)^{m_2+m_4+\cdots+m_{2\lceil n/2\rceil}}
%		\left(\sin(\sqrt{t}r)\right)^{m_1+m_3+\cdots+m_{2\lceil n/2\rceil-1}}
\end{align}
from which the estimate follows.
\end{proof}

The next lemma gives rough bounds on the ``remainder'' terms $R_1$ and $R_2$, and in particular show that they and their derivatives are (uniformly) negligible for small $t$.

\begin{lemma}
\label{l.Rr}
Let $n\ge0$, and $r\ge0$ and $t\in(0,1)$ such that $\sqrt{t}r<\pi/4$. Then for all 
$|\lambda|<\arccosh\left(\frac{1}{\cos(\sqrt{t}r)}\right)$
\begin{equation}
\label{e.R1r} 
\left|\frac{\partial^n}{\partial r^n} R_1\left(t,\cos(\sqrt{t}r)\cosh\lambda\right)\right| 
	\le C_n\frac{e^{-\pi^2/t}}{t^{n/2+1}},
\end{equation}
and for all $|\lambda|>\arccosh\left(\frac{1}{\cos(\sqrt{t}r)}\right)$
\begin{equation}
\label{e.R2r}  \left|\frac{\partial^n}{\partial r^n} R_2\left(t,\cos(\sqrt{t}r)\cosh\lambda\right)\right|
	\le C_n\frac{e^{-\pi^2/t}}{t^{3n/2+1}} . 
\end{equation}
\end{lemma}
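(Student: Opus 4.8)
The plan is to reduce both bounds to the pointwise derivative estimates of Lemmas \ref{l.R1} and \ref{l.R2} by means of the chain-rule estimate of Lemma \ref{l.fr}. Note first that the threshold $|\lambda| = \arccosh(1/\cos(\sqrt t r))$ is exactly the value of $\lambda$ at which $x := \cos(\sqrt t r)\cosh\lambda$ equals $1$: for $|\lambda|$ below it we have $x\in(0,1)$, so the relevant function is $R_1(t,\cdot)$, while for $|\lambda|$ above it we have $x\ge1$, so the relevant function is $R_2(t,\cdot)$. The hypothesis $\sqrt t r < \pi/4$ enters only through $\cos(\sqrt t r) \ge 1/\sqrt2$, and the factor $(\sin(\sqrt t r))^{|\alpha|_{odd}} \le 1$ appearing in Lemma \ref{l.fr} will simply be discarded. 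Since $\mathcal{J}_n$ is finite, its cardinality is absorbed into $C_n$, and by Remark \ref{r.m} the largest value of $|\alpha|$ for $\alpha\in\mathcal{J}_n$ is $n$, attained at $\alpha=(n,0,\ldots,0)$; as $t\in(0,1)$, this index therefore always governs the worst power of $t^{-1}$.

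For \eqref{e.R1r}, applying Lemma \ref{l.fr} with $f=R_1(t,\cdot)$ (the underlying chain-rule identity is local, so it applies wherever $x\in(0,1)$) gives
\begin{equation*}
\left|\frac{\partial^n}{\partial r^n} R_1(t,\cos(\sqrt t r)\cosh\lambda)\right|
	\le C_n t^{n/2}\sum_{\alpha\in\mathcal{J}_n} \left|R_1^{(|\alpha|)}(t,x)\right|(\cosh\lambda)^{|\alpha|}.
\end{equation*}
Here $\cosh\lambda < 1/\cos(\sqrt t r)\le\sqrt2$, so $(\cosh\lambda)^{|\alpha|}\le 2^{n/2}$ is absorbed into $C_n$, and Lemma \ref{l.R1} bounds each term by $C_n e^{-\pi^2/t}/t^{|\alpha|+1}$. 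The worst case $|\alpha|=n$ produces $t^{n/2}\cdot t^{-(n+1)} = t^{-(n/2+1)}$, which is \eqref{e.R1r}.

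For \eqref{e.R2r} the same application with $f=R_2(t,\cdot)$ produces the analogous sum in $R_2^{(|\alpha|)}(t,x)$ with $x\ge1$, and I split according to the two regimes of Lemma \ref{l.R2}. If $1<x<\cosh(\pi/2)$, then $\cosh\lambda < \cosh(\pi/2)/\cos(\sqrt t r)$ is bounded by a constant, the $\cosh\lambda$ factors are harmless, and the first bound of Lemma \ref{l.R2} contributes $t^{-(2|\alpha|+1)}$; the worst case $|\alpha|=n$ gives $t^{n/2}\cdot t^{-(2n+1)} = t^{-(3n/2+1)}$, as required. The case $x\ge\cosh(\pi/2)$ is the crux: the second bound of Lemma \ref{l.R2} now carries the decaying factor $(x^2-1)^{-|\alpha|/2}$, which must be balanced against the growing factor $(\cosh\lambda)^{|\alpha|}$. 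Writing $x=\cos(\sqrt t r)\cosh\lambda$, I use
\begin{equation*}
\frac{\cosh^2\lambda}{x^2-1} = \frac{1}{\cos^2(\sqrt t r)}\cdot\frac{1}{1-1/x^2}
	\le \frac{2}{1-1/\cosh^2(\pi/2)},
\end{equation*}
where the inequality uses $\cos(\sqrt t r)\ge 1/\sqrt2$ and $x\ge\cosh(\pi/2)$. Hence $(\cosh\lambda)^{|\alpha|}(x^2-1)^{-|\alpha|/2}\le C_n$, and the surviving power is $t^{n/2-|\alpha|}$, whose worst case $|\alpha|=n$ gives $t^{-n/2}$, itself dominated by $t^{-(3n/2+1)}$ for $t\in(0,1)$. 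Combining the two subregions gives \eqref{e.R2r}.

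The only genuinely delicate step is the last display: away from the singularity the derivative bound for $R_2$ decays like $(x^2-1)^{-|\alpha|/2}$, and one must check that this decay exactly absorbs the $(\cosh\lambda)^{|\alpha|}$ growth generated by differentiating the argument $\cos(\sqrt t r)\cosh\lambda$. It is precisely this cancellation that requires $\cos(\sqrt t r)$ to stay bounded away from $0$, i.e.\ the hypothesis $\sqrt t r < \pi/4$.
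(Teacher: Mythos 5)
Your proof is correct and follows essentially the same route as the paper's: apply Lemma \ref{l.fr} to convert $r$-derivatives into $x$-derivatives of $R_1$ or $R_2$, invoke Lemmas \ref{l.R1} and \ref{l.R2} (with the same split at $x=\cosh(\pi/2)$), and observe that $(\cosh\lambda)^{|\alpha|}/(x^2-1)^{|\alpha|/2}$ is bounded when $x$ is bounded away from $1$ and $\cos(\sqrt{t}r)\ge 1/\sqrt{2}$. Your write-up just makes the final cancellation and the bookkeeping of the worst power of $t$ more explicit than the paper does.
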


\begin{proof}
Applying Lemmas \ref{l.fr} and then \ref{l.R1} implies that
\begin{align*}
&\left|\frac{\partial^n}{\partial r^n} R_1\left(t,\cos(\sqrt{t}r)\cosh\lambda\right)\right| 
%	&\le C t^{n/2}\sum_{\alpha\in\mathcal{J}_n} \left|R_1^{(|\alpha|)}\left(t,\cos (\sqrt{t}r) \cosh \lambda  \right)\right|  (\cosh \lambda )^{|\alpha|} 		
%		\left(\sin(\sqrt{t}r)\right)^{|\alpha|_{odd}} \\
	\le C t^{n/2}\sum_{\alpha\in\mathcal{J}_n} \frac{e^{-\pi^2/t}}{t^{|\alpha|+1}} (\cosh \lambda )^{|\alpha|}
\end{align*}
	and (\ref{e.R1r}) follows from the restrictions on $\lambda$ and $\sqrt{t}r$.

	For $\cos(\sqrt{t}r)\cosh\lambda\in(1,\cosh(\pi/2)]$, the proof is exactly as for (\ref{e.R1r}) using instead the first bound from Lemma \ref{l.R2}. For $\cos(\sqrt{t}r)\cosh\lambda\in(\cosh(\pi/2),\infty)$, Lemma \ref{l.fr} and the second bound in Lemma \ref{l.R2} imply that
\begin{multline*}
\left|\frac{\partial^n}{\partial r^n} R_2\left(t,\cos(\sqrt{t}r)\cosh\lambda\right)\right| \\
%	&\le C t^{n/2}\sum_{\alpha\in\mathcal{J}_n} \left|R_2^{(|\alpha|)}\left(t,\cos \sqrt{t}r \cosh \lambda  \right)\right|  (\cosh \lambda )^{|\alpha|} 		
%		\left(\sin(\sqrt{t}r)\right)^{|\alpha|_{odd}} \\
	\le C t^{n/2}\sum_{\alpha\in\mathcal{J}_n} 
	\frac{1}{((\cos(\sqrt{t}r)\cosh\lambda)^2-1)^{|\alpha|/2}} 
	 \frac{e^{-\pi^2/t}}{t^{|\alpha|}}  	
		(\cosh\lambda)^{|\alpha|} \left(\sin(\sqrt{t}r)\right)^{|\alpha|_{odd}}.
\end{multline*}
This then gives (\ref{e.R2r}) since
\[ \frac{\cosh \lambda}{\sqrt{(\cos(\sqrt{t}r)\cosh\lambda)^2-1}} \]
is bounded for $\cos(\sqrt{t}r)\cosh\lambda$ bounded away from 1.
\end{proof}

\begin{prop}
\label{p.G}
For all $n\ge0$, there exists $G_n\in L^1(\mathbb{R})$ such that 
\[ \left|e^{-\lambda^2/4t}\frac{\partial^n}{\partial r^n}
	\left(q_t\left(\cos(\sqrt{t}r)\cosh\lambda\right)\right)\right|
	\le G_n(\lambda) r^n
\]
for all $\lambda\in\mathbb{R}$ and $r\ge0$ and $t\in(0,1)$ such that $\sqrt{t}r<\pi/4$.
\end{prop}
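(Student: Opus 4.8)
The plan is to reduce the $r$-derivatives to $x$-derivatives of $q_t$ via the chain-rule estimate of Lemma \ref{l.fr}, insert the pointwise bounds on $Q_1,Q_2,R_1,R_2$ already established, and then extract $L^1(\mathbb{R})$-in-$\lambda$ control together with the correct powers of $r$ and $t$. First I would split the real line according to the sign of $\cos(\sqrt t r)\cosh\lambda - 1$: for $|\lambda|\le \arccosh(\sec(\sqrt t r))$ the argument lies in $[0,1]$ and $q_t$ is given by the $Q_1(1+R_1)$ branch of \eqref{e.qtx}, while for $|\lambda| > \arccosh(\sec(\sqrt t r))$ it lies in $[1,\infty)$ and the $Q_2(1+R_2)$ branch applies. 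Because $\sqrt t r < \pi/4$ forces $\cos(\sqrt t r) > 1/\sqrt2$, the first region is a fixed bounded $\lambda$-interval (contained in $|\lambda|\le\arccosh\sqrt2$) independent of $t$ and $r$, where the Gaussian factor $e^{-\arccos^2 x/4t}$ in $Q_1$ is $\le 1$; its $\lambda$-integrability is therefore automatic and it remains only to track the powers of $t$ and $r$.

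Next, applying Lemma \ref{l.fr} with $f = q_t$ writes $\partial_r^n q_t(\cos(\sqrt t r)\cosh\lambda)$ as a sum over $\alpha\in\mathcal J_n$ of terms carrying $q_t^{(|\alpha|)}(\cos(\sqrt t r)\cosh\lambda)$ together with the weights $t^{n/2}(\cosh\lambda)^{|\alpha|}(\sin(\sqrt t r))^{|\alpha|_{odd}}$. Writing $q_t^{(k)} = \frac{\sqrt\pi e^t}{4t^{3/2}}\partial_x^k[Q_i(1+R_i)]$ and expanding by the Leibniz rule, I would bound the $Q_i$-derivatives by Lemmas \ref{l.Q1} and \ref{l.Q2} and the $R_i$-derivatives by Lemmas \ref{l.R1}, \ref{l.R2}, and \ref{l.Rr}; since every $R_i$-factor carries $e^{-\pi^2/t}$, those terms are uniformly negligible and the dominant contribution comes from $\partial_x^k Q_i$ alone.

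The analytic heart is the region $|\lambda| > \arccosh(\sec(\sqrt t r))$, where $Q_2$ contains the \emph{growing} factor $e^{\arccosh^2 x/4t}$. Here I would use the monotonicity comparison $\mu := \arccosh(\cos(\sqrt t r)\cosh\lambda) \le \arccosh(\cosh\lambda) = |\lambda|$ to absorb this growth against the prefactor $e^{-\lambda^2/4t}$, leaving $e^{-(\lambda^2-\mu^2)/4t}\le 1$. The genuine $\lambda$-decay instead comes from the rational--hyperbolic factor produced by Lemma \ref{l.Q2}: writing $x=\cosh\mu$ and $\cosh\lambda = \cosh\mu/\cos(\sqrt t r)$ converts $\mu^{|\alpha|+1}(\cosh\lambda)^{|\alpha|}/(x^2-1)^{(|\alpha|+1)/2}$ into $(\mu/\sinh\mu)^{|\alpha|+1}\cosh^{|\alpha|}\mu/\cos^{|\alpha|}(\sqrt t r)$, which is $\le C_n(1+\mu^{|\alpha|+1})e^{-\mu}$; and since $\cos(\sqrt t r)>1/\sqrt2$ gives $\mu \ge |\lambda| - \tfrac12\log 2$, one has $e^{-\mu}\le\sqrt2\,e^{-|\lambda|}$, a decay that is uniform in $t$ and $r$. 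This yields a $t$- and $r$-uniform $L^1(\mathbb{R})$ majorant in $\lambda$.

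Finally I would collect the powers of $t$ and $r$. The weight $(\sin(\sqrt t r))^{|\alpha|_{odd}}\le (\sqrt t r)^{|\alpha|_{odd}} = t^{|\alpha|_{odd}/2}r^{|\alpha|_{odd}}$ combines with the $t^{n/2}$ from Lemma \ref{l.fr} and the negative powers of $t$ coming from $q_t^{(|\alpha|)}$; the decisive bookkeeping is exactly the inequality $|\alpha|_{odd}+2|\alpha|_{even}\le \|\alpha\|_n = n$ of Remark \ref{r.m}, which controls these exponents, while the constraint $\sqrt t r < \pi/4$ is used to trade surplus powers of $t$ for powers of $r$ and assemble the remaining $r$-dependence into $r^n$. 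The main obstacle is the region-2 estimate of the preceding paragraph: one must see that the exponential growth of $Q_2$ is precisely offset by $e^{-\lambda^2/4t}$, and that the surviving decay in $\lambda$ is both genuine and uniform in $(t,r)$ — it is the hyperbolic structure, not the residual Gaussian $e^{-(\lambda^2-\mu^2)/4t}$, that supplies the integrability. Once that is in hand, the two branches combine into the asserted dominating function.
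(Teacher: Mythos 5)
Your proposal is correct and follows the paper's proof in all essentials: the same split of the $\lambda$-axis at $\cos(\sqrt{t}r)\cosh\lambda=1$, the same reduction of $r$-derivatives to $x$-derivatives via Lemma \ref{l.fr}, the same dismissal of the $R_i$-contributions through their $e^{-\pi^2/t}$ factors, and the same $t$--$r$ bookkeeping via \eqref{e.A1} and Remark \ref{r.m}. The one place you genuinely deviate is the outer region: the paper sub-splits it at $\cos(\sqrt{t}r)\cosh\lambda=\cosh(\lambda/2)$, retaining a residual Gaussian $e^{-\lambda^2/8}$ on the inner piece (see \eqref{e.D1}) and using $1/\sinh(\lambda/2)$ on the outer piece (see \eqref{e.D2}), whereas you discard the Gaussian everywhere via $e^{(\mu^2-\lambda^2)/4t}\le1$ and extract the decay from $(\mu/\sinh\mu)^{|\alpha|+1}\cosh^{|\alpha|}\mu\le C_n(1+\mu^{|\alpha|+1})e^{-\mu}$ together with a lower bound on $\mu$ in terms of $|\lambda|$; your constant $\tfrac{1}{2}\log 2$ is too small near the boundary of the region, but $e^{\mu}\ge\cosh\mu\ge\tfrac{1}{\sqrt2}\cosh\lambda\ge e^{|\lambda|}/(2\sqrt{2})$ gives $e^{-\mu}\le 2\sqrt{2}\,e^{-|\lambda|}$, which is all you need. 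Both routes produce a majorant of the form $C_n(1+|\lambda|^{n+1})e^{-c|\lambda|}$; yours avoids the extra case split at the cost of the substitution $x=\cosh\mu$. One shared caveat: the final assembly into $r^n$ cannot literally be carried out for $r<1$, since terms with $|\alpha|_{odd}<n$ (e.g.\ $\alpha=(0,1,0,\ldots,0)$) contribute only $r^{|\alpha|_{odd}}$; the honest bound is $C_n(1+r)^n$ --- an imprecision already present in \eqref{e.C}, and harmless for the dominated-convergence arguments in Propositions \ref{p.dz} and \ref{p.dr} where $r$ ranges over a compact set.
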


\begin{proof}
For $|\lambda|< \arccosh(1/\cos(\sqrt{t}r))$ (that is, $\cos(\sqrt{t}r)\cosh\lambda<1$), 
%we have that
%\begin{multline*} 
%\frac{\partial^n}{\partial r^n}
%		\left(q_t\left(\cos(\sqrt{t}r)\cosh\lambda\right)\right) \\
%	= \sum_{m=0}^n {n\choose m} \frac{\partial^m}{\partial r^m}
%		Q_1\left(t,\cos(\sqrt{t}r)\cosh\lambda\right)
%		\frac{\partial^{n-m}}{\partial r^{n-m}}
%		\left(1 + R_1\left(t,\cos(\sqrt{t}r)\cosh\lambda\right)\right).
%\end{multline*}
we must control $Q_1$ and $1+R_1$ and their derivatives up to order $n$, and similarly for $Q_2$ and $1+R_2$ and their derivatives up to order $n$ when
$|\lambda|> \arccosh(1/\cos(\sqrt{t}r))$.
The inequalities (\ref{e.R1r}) and (\ref{e.R2r}) imply that there exists $C<\infty$ such that for 
all $\ell=0,\ldots, n$ 
\begin{equation}
\label{e.A} 
\left|\frac{\partial^{\ell}}{\partial r^{\ell}}
		\left(1 + R_k\left(t,\cos(\sqrt{t}r)\cosh\lambda\right)\right) \right|
	\le C 
\end{equation}
for $k=1,2$ on their respective domains.

Lemmas \ref{l.fr} and \ref{l.Q1} imply that for $|\lambda|< \arccosh(1/\cos(\sqrt{t}r))$
\begin{multline}
\label{e.C}
\bigg|\frac{\partial^m}{\partial r^m} Q_1\left(t,\cos(\sqrt{t}r)\cosh\lambda\right)\bigg| \\
%	&\notag\le C_m t^{m/2}\sum_{\alpha\in\mathcal{J}_m} \left|Q_1^{(|\alpha|)}\left(t,\cos (\sqrt{t}r) \cosh \lambda  \right)\right|  (\cosh \lambda )^{|\alpha|} 		
%		\left(\sin(\sqrt{t}r)\right)^{|\alpha|_{odd}} \\
	\le C_m t^{m/2}\sum_{\alpha\in\mathcal{J}_m} \frac{1}{t^{|\alpha|}} (\cosh \lambda )^{|\alpha|} 	
		\left(\sin(\sqrt{t}r)\right)^{|\alpha|_{odd}} 
	\le Cr^m,
\end{multline}
since for each $\alpha\in\mathcal{J}_m$
\begin{equation}
\label{e.A1}
\begin{split}
\frac{t^{m/2}\left(\sin(\sqrt{t}r)\right)^{|\alpha|_{odd}}}{t^{|\alpha|}}
	&= \frac{t^{m/2}}{t^{|\alpha|_{even}+|\alpha|_{odd}/2}} 
		\left(\frac{\sin(\sqrt{t}r)}{\sqrt{t}}\right)^{|\alpha|_{odd}} \\
	&= \left(\frac{\sin(\sqrt{t}r)}{\sqrt{t}}\right)^{|\alpha|_{odd}}
	\le r^{|\alpha|_{odd}}
\end{split}
\end{equation}
by Remark \ref{r.m}. 

Similarly, Lemmas \ref{l.fr} and \ref{l.Q2} imply that for $|\lambda|> \arccosh(1/\cos(\sqrt{t}r))$
\begin{multline}
\label{e.D}
\left|\frac{\partial^n}{\partial r^n} Q_2\left(t,\cos(\sqrt{t}r)\cosh\lambda\right)\right| 
	\le C e^{\arccosh^2 (\cos (\sqrt{t}r) \cosh \lambda)/4t} \\
	\times \sum_{\alpha\in\mathcal{J}_n} 
		\Bigg\{\frac{t^{n/2}\left(\sin(\sqrt{t}r)\right)^{|\alpha|_{odd}}}{t^{|\alpha|}}
		\left(\frac{\cosh \lambda}{\sqrt{(\cos (\sqrt{t}r) \cosh \lambda)^2-1}}\right)^{|\alpha|} \\
	\times	\frac{\left(\arccosh(\cos (\sqrt{t}r) \cosh \lambda)\right)^{|\alpha|+1}}{\sqrt{(\cos \sqrt{t}r \cosh \lambda)^2-1}}\Bigg\} .
\end{multline}
%Note that for the given $t$ and $r$, 
%$\cos(\sqrt{t}r)\cosh\lambda\le \cosh\lambda$. In particular, 
Again, (\ref{e.A1}) ensures that all factors of $t$ have non-negative exponent. Note also that for $1<u\le \cosh (\lambda/2)$
\begin{multline}
\label{e.D1} 
e^{-\lambda^2/4t}e^{\arccosh^2u/4t}\frac{\left(\arccosh u\right)^{|\alpha|+1}}{\sqrt{u^2-1}} \\
	= e^{-\lambda^2/4t}e^{\arccosh^2u/4t}
		\frac{\arccosh u}{\sqrt{u^2-1}}\cdot\left(\arccosh u\right)^{|\alpha|} 
	\le e^{-\lambda^2/8} \left(\frac{\lambda}{2}\right)^{|\alpha|} 
\end{multline}
and for $\cosh (\lambda/2)\le u\le \cosh \lambda$
\begin{equation}
\label{e.D2} 
e^{-\lambda^2/4t}e^{\arccosh^2u/4t} \frac{\left(\arccosh u\right)^{|\alpha|+1}}{\sqrt{u^2-1}}
	\le \frac{ \lambda^{|\alpha|+1}}{\sinh(\lambda/2)}. 
\end{equation}
As the remaining factor is a bounded function for large $\lambda$, this completes the proof.
\end{proof}

\begin{lemma}
\label{l.drq}
For all $n\ge0$, $r\ge0$, and $\lambda\in\mathbb{R}$,
\[\lim_{t\downarrow 0} e^{-\lambda^2/4t}\frac{\partial^n}{\partial r^n} e^{\arccosh^2(\cos(\sqrt{t}r)\cosh\lambda)/4t}
	=  \frac{\partial^n}{\partial r^n} e^{-r^2\lambda\coth\lambda/4}. \]
\end{lemma}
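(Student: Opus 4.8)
The plan is to move the divergent prefactor $e^{-\lambda^2/4t}$ inside the exponential and reduce everything to the $C^\infty$-convergence, in the variable $r$, of a single exponent. Using $\arccosh^2(\cosh\lambda)=\lambda^2$, write
\[
  e^{-\lambda^2/4t}\,e^{\arccosh^2(\cos(\sqrt t r)\cosh\lambda)/4t}=\exp\!\big(F_t(r,\lambda)\big),
  \qquad
  F_t(r,\lambda):=\frac{\arccosh^2(\cos(\sqrt t r)\cosh\lambda)-\lambda^2}{4t}.
\]
By the Fa\`a di Bruno formula, $\frac{\partial^n}{\partial r^n}e^{F_t}$ equals $e^{F_t}$ times a fixed (universal) polynomial in $\partial_r F_t,\dots,\partial_r^n F_t$, and the identical formula holds with $F_t$ replaced by $F_0(r,\lambda):=-r^2\lambda\coth\lambda/4$. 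Hence, by continuity of $\exp$ and of the polynomial, it suffices to show that for the fixed $r$ and $\lambda$ one has $\partial_r^k F_t(r,\lambda)\to \partial_r^k F_0(r,\lambda)$ as $t\downarrow0$ for every $0\le k\le n$.

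The key device is the substitution $s=\sqrt t$ together with the evenness of cosine. Fix $\lambda\neq0$ and set $f(s,r):=\arccosh^2(\cos(sr)\cosh\lambda)$. For $(s,r)$ in a neighborhood of $\{0\}\times K$, with $K$ a compact $r$-interval, the argument $\cos(sr)\cosh\lambda$ lies in a compact subinterval of $(1,\infty)$, where $\arccosh^2$ is $C^\infty$ by Lemma \ref{l.newnew1}; thus $f$ is jointly smooth there and is even in $s$. Because $f(\cdot,r)$ is even we have $\partial_s f(0,r)=0$, so Taylor's theorem with integral remainder gives
\[
  F_t(r,\lambda)=\frac{f(\sqrt t,r)-f(0,r)}{4t}=\tfrac14\,R(\sqrt t,r),
  \qquad
  R(s,r):=\int_0^1(1-\tau)\,\partial_s^2 f(\tau s,r)\,d\tau,
\]
and $R$ is jointly smooth in $(s,r)$ by differentiation under the integral sign. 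Consequently $\partial_r^k F_t(r,\lambda)=\tfrac14\,\partial_r^k R(\sqrt t,r)\to \tfrac14\,\partial_r^k R(0,r)$ as $t\downarrow0$, which is the required convergence; it remains to identify the limit. Since the term of $\partial_s^2 f$ carrying $(\arccosh^2)''$ is proportional to $\sin^2(sr)$ and vanishes at $s=0$, a direct computation gives $\partial_s^2 f(0,r)=(\arccosh^2)'(\cosh\lambda)\cdot(-r^2\cosh\lambda)=-2r^2\lambda\coth\lambda$, using $(\arccosh^2)'(x)=2\arccosh x/\sqrt{x^2-1}$. Then $R(0,r)=\tfrac12\partial_s^2 f(0,r)$ yields $\tfrac14 R(0,r)=-r^2\lambda\coth\lambda/4=F_0(r,\lambda)$, as claimed.

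The borderline value $\lambda=0$ is in fact exact: interpreting $\arccosh^2$ by the analytic continuation used for $q_t$ (equal to $-\arccos^2$ on $[0,1]$), for $\sqrt t r\le\pi$ we get $\arccosh^2(\cos(\sqrt t r))=-\arccos^2(\cos(\sqrt t r))=-tr^2$, so the left-hand side equals $e^{-r^2/4}$ for every $t$, while $F_0$ extends continuously to $\lambda=0$ with value $-r^2/4$ because $\lambda\coth\lambda\to1$; all $r$-derivatives then match trivially. The main obstacle is exactly the passage delivering convergence of \emph{all} $r$-derivatives: naively differentiating $F_t$ in $r$ and then letting $t\downarrow0$ produces apparent negative powers of $\sqrt t$, and it is the evenness in $s=\sqrt t$, encoded in the integral-remainder representation of $R$, that cancels these singularities and furnishes the smooth extension to $t=0$.
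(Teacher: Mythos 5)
Your proof is correct, but it takes a genuinely different route from the paper's. The paper works directly on the Fa\`a di Bruno expansion of $\partial_r^n e^{\arccosh^2(\cos(\sqrt t r)\cosh\lambda)/4t}$: each factor $\partial_r^j\bigl(\arccosh^2(\cos(\sqrt t r)\cosh\lambda)/4t\bigr)$ carries a net power $t^{(j-2)/2}$ by the chain-rule computation \eqref{e.AA2}, so every multi-index $\alpha$ with $\alpha_j>0$ for some $j\ge 3$ dies in the limit, and the surviving terms $\alpha=(\alpha_1,\alpha_2,0,\dots,0)$ are matched one-by-one against the expansion \eqref{e.drh} of the right-hand side via the elementary limits \eqref{e.easy1} and \eqref{e.easy2}. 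You instead absorb the prefactor $e^{-\lambda^2/4t}$ into the exponent and reduce the whole lemma to $C^n$-convergence in $r$ of the single exponent $F_t$, which you get from the evenness of $s\mapsto\arccosh^2(\cos(sr)\cosh\lambda)$ in $s=\sqrt t$ together with the integral form of the Taylor remainder; the limit is then identified by a single computation of $\partial_s^2 f(0,r)$. The parity in $\sqrt t$ is precisely the structural reason the paper's apparent negative powers of $t$ cancel (there it is encoded combinatorially in Remark \ref{r.m}), so your argument isolates the mechanism more cleanly and avoids the term-by-term bookkeeping; what it gives up is the explicit form of the surviving terms, which the paper reuses downstream (e.g.\ the factor $\arccosh(\cos(\sqrt t r)\cosh\lambda)/\sqrt{(\cos(\sqrt t r)\cosh\lambda)^2-1}$ and its limit $\lambda/\sinh\lambda$ reappear in Propositions \ref{p.drq} and \ref{p.drq2}). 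Your separate treatment of $\lambda=0$ is a small point in your favor, since the paper's proof assumes $\cos(\sqrt t r)\cosh\lambda>1$ for small $t$, which requires $\lambda\ne0$.
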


\begin{proof} 
First we compute the right hand side using \eqref{chain}:
\begin{align}
\frac{\partial^{n}}{\partial r^{n}}e^{-r^2\lambda\coth\lambda/4}
	&\notag= \sum_{\alpha\in\mathcal{J}_n} c_{\alpha n} e^{-r^2\lambda\coth\lambda/4}
		\prod_{j=1}^n \left(- \frac{\partial^{j}}{\partial r^{j}}\frac{r^2\lambda\coth\lambda}{4}\right)^{\alpha_j} \\
	&\label{e.drh}=\sum_{\alpha_1+2\alpha_2=n} c_{\alpha n} e^{-r^2\lambda\coth\lambda/4}
		\left(- \frac{r\lambda\coth\lambda}{2}\right)^{\alpha_1}
		\left(- \frac{\lambda\coth\lambda}{2}\right)^{\alpha_2}.
\end{align}

Now for the left hand side, fix $r$ and $\lambda$ and assume $t$ is sufficiently small that $\cos(\sqrt{t}r)\cosh\lambda\ge 1$. Note that 
\begin{align*}
\frac{\partial^{n}}{\partial r^{n}}&e^{\arccosh^2(\cos(\sqrt{t}r)\cosh\lambda)/4t} \\
	&= \sum_{\alpha\in\mathcal{J}_n} c_{\alpha n}
		e^{\arccosh^2 (\cos(\sqrt{t}r)\cosh\lambda)/4t}
		\prod_{j=1}^{n}
		\left(\frac{\partial^{j}}{\partial r^{j}}
		\frac{\arccosh^2(\cos(\sqrt{t}r)\cosh\lambda)}{4t}\right)^{\alpha_j}.
\end{align*}
Then
\begin{equation}
\label{e.easy2}
		\lim_{t\downarrow0} e^{-\lambda^2/4t}e^{\arccosh^2(\cos(\sqrt{t}r)\cosh\lambda)/4t}
	= e^{-r^2\lambda\coth\lambda/4},
\end{equation}
and by (\ref{e.AA2}) for each $j$
\begin{multline*}
\frac{\partial^{j}}{\partial r^{j}}\arccosh^2(\cos(\sqrt{t}r)\cosh\lambda)
	= t^{j/2}\sum_{\beta\in\mathcal{J}_j} c_{\beta j} 
		\left(\frac{d^{|\beta|}}{dx^{|\beta|}} \arccosh^2 x\right) 
		\bigg|_{x=\cos(\sqrt{t}r)\cosh\lambda} \\
		\times (\cosh \lambda)^{|\beta|} (\cos(\sqrt{t}r))^{|\beta|_{even}}(\sin(\sqrt{t}r))^{|\beta|_{odd}}.
\end{multline*}
Thus,
\begin{multline*}
\prod_{j=1}^{n}\left(\frac{\partial^{j}}{\partial r^{j}}\frac{\arccosh^2(\cos(\sqrt{t}r)\cosh\lambda)}{4t}\right)^{\alpha_j}\\
	= \prod_{j=1}^{n}\left(\frac{t^{(j-2)/2}}{4}\sum_{\beta\in\mathcal{J}_j} c_{\beta j} 
		\Bigg(\frac{d^{|\beta|}}{dx^{|\beta|}} \arccosh^2 x\right) 
		\bigg|_{x=\cos(\sqrt{t}r)\cosh\lambda} \\
	\times
		(\cosh \lambda)^{|\beta|} (\cos(\sqrt{t}r))^{|\beta|_{even}}(\sin(\sqrt{t}r))^{|\beta|_{odd}}\Bigg)^{\alpha_j}
\end{multline*}
which will converge to 0 as $t\downarrow0$ for any $\alpha$ so that $\alpha_j>0$ for any $j\ge3$. That is, the only non-zero contributions in the limit come from the elements $\alpha=(\alpha_1,\alpha_2,0,\ldots,0)$ of $\mathcal{J}_n$ and thus
\begin{multline*}
\lim_{t\downarrow 0} e^{-\lambda^2/4t}\frac{\partial^n}{\partial r^n}e^{\arccosh^2(\cos(\sqrt{t}r)\cosh\lambda)/4t} 
	= \lim_{t\downarrow 0} e^{-\lambda^2/4t}e^{\arccosh^2(\cos(\sqrt{t}r)\cosh\lambda)/4t}  \\
	\times \sum_{\alpha_1+2\alpha_2=n} c_{\alpha n}	\left(-\frac{\sin(\sqrt{t}r)}{2\sqrt{t}}\frac{\arccosh(\cos(\sqrt{t}r)\cosh\lambda)}{\sqrt{(\cos(\sqrt{t}r) \cosh\lambda)^2-1}}\cosh\lambda \right)^{\alpha_1} \\
	\times
\left(-\frac{\cos(\sqrt{t}r)}{2}\frac{\arccosh(\cos(\sqrt{t}r)\cosh\lambda)}{\sqrt{(\cos(\sqrt{t}r) \cosh\lambda)^2-1}}\cosh\lambda + O\left(\sin^2(\sqrt{t}r)\right) \right)^{\alpha_2}.
\end{multline*}
We may easily see that
	\begin{equation}
		\label{e.easy1}
		\lim_{t\downarrow 0}\frac{\arccosh(\cos(\sqrt{t}r)\cosh\lambda)}
			{\sqrt{(\cos(\sqrt{t}r)\cosh\lambda)^2-1}}
	= \frac{\lambda}{\sinh\lambda},
\end{equation}
which combined with (\ref{e.drh}) completes the proof.
\end{proof}

\begin{prop}
\label{p.drq}
For all $n\ge0$, $r\ge0$, and $\lambda\in\mathbb{R}$,
\[\lim_{t\downarrow 0} e^{-\lambda^2/4t}\frac{\partial^n}{\partial r^n} Q_2\left(t,\cos(\sqrt{t}r)\cosh\lambda\right)
	= \frac{\lambda}{\sinh\lambda} 
		\frac{\partial^n}{\partial r^n} e^{-r^2\lambda\coth\lambda/4}. \]
\end{prop}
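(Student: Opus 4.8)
The plan is to factor $Q_2$ into the prefactor $A(x):=\arccosh x/\sqrt{x^2-1}$ and the Gaussian-type factor $E(t,x):=e^{\arccosh^2 x/4t}$, so that $Q_2(t,x)=A(x)E(t,x)$, and then to exploit the fact that Lemma~\ref{l.drq} already computes the limit of $e^{-\lambda^2/4t}\,\frac{\partial^{j}}{\partial r^{j}}E(t,\cos(\sqrt t r)\cosh\lambda)$ for every order $j$. Writing $x=x(t,r)=\cos(\sqrt t r)\cosh\lambda$ and applying the Leibniz rule gives
\[ \frac{\partial^n}{\partial r^n}Q_2(t,x) = \sum_{k=0}^n \binom{n}{k}\left(\frac{\partial^k}{\partial r^k}A(x)\right)\left(\frac{\partial^{n-k}}{\partial r^{n-k}}E(t,x)\right), \]
and I would multiply through by $e^{-\lambda^2/4t}$, assigning this factor to the $E$-derivative in each summand.

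The key observation is that every derivative hitting $A$ costs a power of $\sqrt t$. Since $A$ extends to a smooth function in a neighborhood of $x=\cosh\lambda$ (for $\lambda=0$, where the argument tends to $1$, this smoothness follows as in Lemma~\ref{l.newnew1} by writing $A=s/\sinh s$ with $s=\arccosh x$, an even and hence analytic function of $s^2=\arccosh^2 x$), Lemma~\ref{l.fr} applied to $f=A$ yields, for $k\ge1$,
\[ \left|\frac{\partial^k}{\partial r^k}A(x)\right| \le C_k\,t^{k/2}\sum_{\alpha\in\mathcal{J}_k}\left|A^{(|\alpha|)}(x)\right|(\cosh\lambda)^{|\alpha|}(\sin(\sqrt t r))^{|\alpha|_{odd}}. \]
For fixed $r$ and $\lambda$, as $t\downarrow0$ we have $x\to\cosh\lambda$, so each $A^{(|\alpha|)}(x)$ stays bounded and the whole right-hand side is $O(t^{k/2})$. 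Because Lemma~\ref{l.drq} guarantees that $e^{-\lambda^2/4t}\,\frac{\partial^{n-k}}{\partial r^{n-k}}E(t,x)$ converges, hence is bounded for small $t$, every term with $k\ge1$ tends to $0$.

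It then remains to treat the surviving $k=0$ term, namely $A(x)\cdot e^{-\lambda^2/4t}\,\frac{\partial^n}{\partial r^n}E(t,x)$. By~\eqref{e.easy1} the prefactor satisfies $A(x)=A(\cos(\sqrt t r)\cosh\lambda)\to \lambda/\sinh\lambda$, while Lemma~\ref{l.drq} gives $e^{-\lambda^2/4t}\,\frac{\partial^n}{\partial r^n}E(t,x)\to \frac{\partial^n}{\partial r^n}e^{-r^2\lambda\coth\lambda/4}$; multiplying these two limits produces exactly the claimed right-hand side. I expect no genuine obstacle here, since Lemma~\ref{l.drq} carries the analytic weight of the argument; the only point requiring care is the uniform-in-$t$ control of the derivatives of the prefactor $A$ near $x=\cosh\lambda$ (equivalently, confirming $A$ is smooth there, including at $\lambda=0$ where $x\to1$), which is precisely what renders the $k\ge1$ terms negligible in the limit.
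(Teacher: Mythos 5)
Your proof is correct and follows essentially the same route as the paper's: both split $Q_2$ into the prefactor $\arccosh x/\sqrt{x^2-1}$ times $e^{\arccosh^2 x/4t}$, apply the Leibniz rule, show that every $r$-derivative landing on the prefactor contributes an extra factor of $t^{1/2}$ and hence vanishes in the limit, and conclude from Lemma \ref{l.drq} together with (\ref{e.easy1}). The only cosmetic difference is that the paper bounds the prefactor's derivatives by rewriting them in terms of $\partial_r^{\,k+1}\arccosh^2$ via (\ref{e.jarccosh2}), whereas you invoke smoothness of $s/\sinh s$ near $s=0$ directly; both yield the same $O(t^{k/2})$ estimate.
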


\begin{proof} 
Fix $r$ and $\lambda$ and assume $t$ is sufficiently small that $\cos(\sqrt{t}r)\cosh\lambda\ge 1$. Then we may write
\begin{multline*}
\frac{\partial^n}{\partial r^n}Q_2\left(t,\cos(\sqrt{t}r)\cosh\lambda\right) \\
	= \sum_{m=0}^n {n\choose m} \frac{\partial^{n-m}}{\partial r^{n-m}}
		\frac{\arccosh(\cos(\sqrt{t}r)\cosh\lambda)}
			{\sqrt{(\cos(\sqrt{t}r)\cosh\lambda)^2-1}}
		\frac{\partial^{m}}{\partial r^{m}}e^{\arccosh^2(\cos(\sqrt{t}r)\cosh\lambda)/4t}.
\end{multline*}
By Lemma \ref{l.fr} and equation (\ref{e.jarccosh2})
\begin{align*}
%\label{e.23}
\frac{\partial^{n-m}}{\partial r^{n-m}} 
		&\frac{\arccosh(\cos(\sqrt{t}r)\cosh\lambda)}
			{\sqrt{(\cos(\sqrt{t}r)\cosh\lambda)^2-1}}\\
	&\qquad= \frac{1}{2\sqrt{t}\sin(\sqrt{t}r)\cosh\lambda} \cdot\frac{\partial^{n-m+1}}{\partial r^{n-m+1}}
		\arccosh^2(\cos(\sqrt{t}r)\cosh\lambda)\\
	&\qquad=  \sum_{\alpha\in\mathcal{J}_{n-m}} O\left(t^{(n-m)/2} \sin(\sqrt{t}r)^{|\alpha|_{\mathrm{odd}}}\right)
\end{align*}
and by Lemma \ref{l.fr} and equation (\ref{e.nearccosh2})
\begin{multline*}
\frac{\partial^m}{\partial r^m}%&
	e^{\arccosh^2(\cos(\sqrt{t}r)\cosh\lambda)/4t} \\
%	&\le C t^{m/2} \sum_{\alpha\in\mathcal{J}_m} \left|\frac{e^{\arccosh^2 x/4t}}{(x^2-1)^{(|\alpha|+1)/2}} \sum_{\beta\in\mathcal{J}_{|\alpha|+1}} \frac{1}{t^{|\beta|}} \prod_{j=1}^{|\alpha|+1}\left(\frac{p_j(x)\sqrt{x^2-1}+q_j(x)\arccosh x}{(x^2-1)^{(j-1)/2}}\right)^{\beta_j}\right|\bigg|_{x=\cos(\sqrt{t}r) \cosh\lambda}\\
%	&\qquad\times (\cosh\lambda)^{|\alpha|}\sin(\sqrt{t}r)^{|\alpha|_{odd}} \\
%	&
	= \sum_{\alpha\in\mathcal{J}_m} O\left(e^{\arccosh^2(\cos(\sqrt{t}r)\cosh\lambda)/4t} t^{m/2}\frac{\sin(\sqrt{t}r)^{|\alpha|_{\mathrm{odd}}}}{t^{|\alpha|}}\right)
\end{multline*}
as $t\downarrow0$. (Note that the implicit constants do depend on $r$ and $\lambda$, but we have control of these by the computations in Proposition \ref{p.G}). Recalling (\ref{e.easy2}) and that
again (\ref{e.A1}) guarantees that all exponents of $t$ appearing in this second bound are non-negative implies that the only non-zero contribution in the limit is the $n=m$ term. Thus,
\begin{multline*}
\lim_{t\downarrow 0} e^{-\lambda^2/4t}\frac{\partial^n}{\partial r^n}Q_2\left(t,\cos(\sqrt{t}r)\cosh\lambda\right) \\
	= \lim_{t\downarrow 0}\frac{\arccosh(\cos(\sqrt{t}r)\cosh\lambda)}
			{\sqrt{(\cos(\sqrt{t}r)\cosh\lambda)^2-1}}
		e^{-\lambda^2/4t}\frac{\partial^{n}}{\partial r^{n}}e^{\arccosh^2(\cos(\sqrt{t}r)\cosh\lambda)/4t},
\end{multline*}
and combining this with Lemma \ref{l.drq} and equation (\ref{e.easy1}) complete the proof.
\end{proof}

We now have all the necessary elements for the proofs of (\ref{e.crit1}) and (\ref{e.crit2}).

\begin{prop}
\label{p.dz}
For all $n\ge0$, uniformly on compact subsets of $[0,\infty)\times\mathbb{R}$,
\[\lim_{t\downarrow 0} t^2 \frac{\partial^n}{\partial z^n} (p_t(\sqrt{t}r,tz)) 
	= 2\pi^2 \frac{\partial^n}{\partial z^n} h_1(r,z).\]
\end{prop}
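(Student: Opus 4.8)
The plan is to differentiate under the integral sign and then pass the limit $t\downarrow0$ inside by dominated convergence. Starting from the integral formula for $\partial_z^n p_t(\sqrt t r,tz)$ recorded before \eqref{e.crit1} (with the order of $r$-derivatives set to $0$), the only $z$-dependence sits in the factor $e^{-(\lambda+itz)^2/4t}=e^{-\lambda^2/4t}\,e^{-iz\lambda/2}\,e^{tz^2/4}$. Writing $q_t=\tfrac{\sqrt\pi e^t}{4t^{3/2}}\{Q_i(1+R_i)\}$ (with $i=1$ or $2$ according as $\cos(\sqrt t r)\cosh\lambda$ is $<1$ or $\ge1$) and using $\tfrac{t^2}{\sqrt{4\pi t}}\cdot\tfrac{\sqrt\pi e^t}{4t^{3/2}}=\tfrac{e^t}{8}$, I would first observe that the scaling factor $t^2$ exactly cancels the singular prefactor of $q_t$, leaving
\[ t^2\frac{\partial^n}{\partial z^n}p_t(\sqrt t r,tz)=\frac{e^t}{8}\int_{-\infty}^\infty\frac{\partial^n}{\partial z^n}\bigl(e^{-iz\lambda/2}e^{tz^2/4}\bigr)\,e^{-\lambda^2/4t}\,\{Q_i(1+R_i)\}\bigl(\cos(\sqrt t r)\cosh\lambda\bigr)\,d\lambda. \]

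For the pointwise limit of the integrand I fix $\lambda\neq0$; once $t$ is small $\cos(\sqrt t r)\cosh\lambda\ge1$, so $\{Q_i(1+R_i)\}$ is $Q_2(1+R_2)$. Since $e^{tz^2/4}\to1$ together with all its $z$-derivatives, Leibniz's rule gives $\partial_z^n(e^{-iz\lambda/2}e^{tz^2/4})\to(-i\lambda/2)^n e^{-iz\lambda/2}$. The $n=0$ case of Proposition \ref{p.drq} yields $e^{-\lambda^2/4t}Q_2(t,\cos(\sqrt t r)\cosh\lambda)\to\tfrac{\lambda}{\sinh\lambda}e^{-r^2\lambda\coth\lambda/4}$, while $R_2\to0$ by Lemma \ref{l.R2}; hence for a.e.\ $\lambda$ the integrand converges to $\tfrac18(-i\lambda/2)^n e^{-iz\lambda/2}\tfrac{\lambda}{\sinh\lambda}e^{-r^2\lambda\coth\lambda/4}$.

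The crux is a $t$-uniform $L^1$ dominating function. For $(r,z)$ in a fixed compact set and $t\in(0,1)$ small enough that $\sqrt t r<\pi/4$, the first factor is, by Leibniz, a sum of $(-i\lambda/2)^k$ times $z$-derivatives of $e^{tz^2/4}$, all bounded there by $C_n(1+|\lambda|)^n$ uniformly in $t$. The second factor $e^{-\lambda^2/4t}\{Q_i(1+R_i)\}$ is controlled by the $n=0$ estimates underlying Proposition \ref{p.G}, which produce an $L^1$ envelope $G_0$ with Gaussian decay $e^{-\lambda^2/8}$ for moderate $\lambda$ and decay of order $\lambda/\sinh(\lambda/2)$ for large $\lambda$. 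Because this decay beats any polynomial growth, $C_n(1+|\lambda|)^nG_0(\lambda)\in L^1(\mathbb{R})$ dominates the integrand uniformly in $t$ and in $(r,z)$ on the compact set.

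Dominated convergence then gives $\lim_{t\downarrow0}t^2\partial_z^n p_t(\sqrt t r,tz)=\tfrac18\int_{-\infty}^\infty\partial_z^n(e^{-iz\lambda/2})\tfrac{\lambda}{\sinh\lambda}e^{-r^2\lambda\coth\lambda/4}\,d\lambda$. Since $\tfrac{\lambda}{\sinh\lambda}e^{-r^2\lambda\coth\lambda/4}$ is even in $\lambda$, the substitution $\lambda\mapsto-\lambda$ replaces $\partial_z^n e^{-iz\lambda/2}$ by $\partial_z^n e^{iz\lambda/2}$ and identifies this limit with $2\pi^2\partial_z^n h_1(r,z)$ via the integral formula for $h_1$. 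Uniformity on compact subsets of $[0,\infty)\times\mathbb{R}$ follows from the splitting $\int_{|\lambda|\le L}+\int_{|\lambda|>L}$: the fixed $L^1$ envelope makes the tail uniformly small, and on $\{|\lambda|\le L\}$ the integrand converges uniformly in $(r,z,\lambda)$ by continuity and compactness. The only genuine difficulty is the uniform-in-$t$ domination, and here it is mild, since differentiation in $z$ contributes only polynomial factors in $\lambda$ that the super-polynomial decay of Proposition \ref{p.G} absorbs --- in contrast to the $r$-direction of Proposition \ref{p.dr}, where the derivatives interact with $\arccosh^2$ and the spatial scaling.
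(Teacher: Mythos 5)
Your proof is correct and follows essentially the same route as the paper's: the same factorization $q_t=\frac{\sqrt{\pi}e^t}{4t^{3/2}}Q_i(1+R_i)$ with the $t^2$ scaling cancelling the singular prefactor, domination via Proposition \ref{p.G}, the $n=0$ case of Proposition \ref{p.drq} for the $Q_2$ limit, and the vanishing of $R_2$ and of the $z$-derivatives of $e^{tz^2/4}$ of positive order. The only cosmetic differences are that the paper performs the substitution $\lambda\mapsto-\lambda$ at the outset and disposes of the $Q_1$ region by noting its $\lambda$-measure is $O(\sqrt{t}\,r)$, whereas you absorb that region into the dominated-convergence step; both are fine.
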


\begin{proof}
Let $K$ be a compact subset of $[0,\infty)\times\mathbb{R}$, and let $t>0$ be sufficiently small that $(\sqrt{t}r,tz)\in[0,\pi/4]\times[-\pi,\pi]$ for all $(r,z)\in K$.

First note that making the change of variables $\lambda\mapsto-\lambda$ we may write 
\[ p_t(\sqrt{t}r,tz)
	= \frac{1}{\sqrt{4\pi t}} e^{tz^2/4} \int_{-\infty}^\infty 
		e^{-\lambda^2/4t}e^{i\lambda z/2} q_t(\cos (\sqrt{t}r)\cosh \lambda) d\lambda. \]
For $m,n\ge1$, let $a_m$ and $b_n$ be defined by 
\[ \frac{\partial^m}{\partial z^m} e^{tz^2/4} =: e^{tz^2/4} a_m(t,z) \]
and
\begin{align*}
\frac{\partial^n}{\partial z^n}e^{i\lambda z/2}
	=: e^{i\lambda z/2}b_{n}(\lambda).
\end{align*}
Then the $a_m$'s are polynomials in $t$ and $z$, and the $b_n$'s are polynomials (of order $n$) in $\lambda$ and thus integrable against $e^{-\lambda^2/4t}$. With this notation (and taking $a_0,b_0\equiv1$) we may write
\begin{align*}
\frac{\partial^n}{\partial z^n}\left(p_t(\sqrt{t}r,tz)\right) 
%	&= \frac{1}{\sqrt{4\pi t}} \sum_{m=0}^n \binom{n}{m} 
%		\left(\frac{\partial^m}{\partial z^m} e^{tz^2/4}\right)
%		\int_{-\infty}^\infty e^{-\lambda^2/4t}
%		\left(\frac{\partial^{n-m}}{\partial z^{n-m}} e^{i\lambda z/2}\right)
%		q_t(\cos (\sqrt{t}r)\cosh \lambda) d\lambda \\
	&= \frac{1}{\sqrt{4\pi t}} \sum_{m=0}^n \binom{n}{m} 
		e^{tz^2/4}a_m(t,z) \\
	&\qquad \times
		\int_{-\infty}^\infty e^{-\lambda^2/4t}
		\left(e^{i\lambda z/2}b_{n-m}(\lambda)\right)
		q_t(\cos (\sqrt{t}r)\cosh \lambda) d\lambda.
\end{align*}
Note that each term of the polynomials $a_m$ will be at least degree 1 in $t$, thus, for all $m\ge1$,
\[ \lim_{t\downarrow0} e^{tz^2/4}a_m(t,z) = 0. \]
So we have that 
\begin{align*}
\lim_{t\downarrow0} t^2 \frac{\partial^n}{\partial z^n}&\left(p_t(\sqrt{t}r,tz)\right) \\
	&= \lim_{t\downarrow0} \frac{t^{3/2}}{\sqrt{4\pi}}  
		e^{tz^2/4} \int_{-\infty}^\infty e^{-\lambda^2/4t}
		e^{i\lambda z/2}b_{n}(\lambda)
		q_t(\cos (\sqrt{t}r)\cosh \lambda) d\lambda \\
	&= \lim_{t\downarrow0} \frac{e^te^{tz^2/4}}{8}  
		 (J_1^n(t,r,z)+ J_2^n(t,r,z))   
\end{align*}
where
\begin{multline*} 
J_1^n(t,r,z) 
	:= \int_{\cos(\sqrt{t}r)\cosh\lambda<1} e^{-\lambda^2/4t}
		e^{i\lambda z/2}b_{n}(\lambda) \\
		\times Q_1\left(t,\cos (\sqrt{t}r)\cosh \lambda\right)\left(1+ R_1\left(t,\cos (\sqrt{t}r)\cosh \lambda\right)\right) d\lambda 
\end{multline*}
and
\begin{multline*}
J_2^n(t,r,z):= \int_{\cos(\sqrt{t}r)\cosh\lambda>1} e^{-\lambda^2/4t}
		e^{i\lambda z/2}b_{n}(\lambda) \\
		\times Q_2\left(t,\cos (\sqrt{t}r)\cosh \lambda\right)\left(1+ R_2\left(t,\cos (\sqrt{t}r)\cosh \lambda\right)\right) d\lambda. 
\end{multline*}

For $J_1^n$, note that for $\sqrt{t}r\in(0,\pi/4)$
\[ |\lambda| < \arccosh\left(\frac{1}{\cos(\sqrt{t}r)}\right)
	\le \arccosh(\sqrt{2}). \]
Combining this with (\ref{e.A}) with $\ell=0$ and (\ref{e.C}) with $m=0$ implies that the integrand of $J_1^n$ is uniformly bounded by a finite constant, and thus
\begin{align*} 
|J_1^n(t,r,z)| 
	&\le Cm\left(\left\{\lambda: 
		|\lambda|\le \arccosh\left(\frac{1}{\cos(\sqrt{t}r)}\right)\right\}\right) 
%	&= C\cdot 2\ln\left(\frac{1}{\cos(\sqrt{t}r)} + \sqrt{\frac{1}{\cos^2(\sqrt{t}r)}-1}\right)
%		\\
%	&= C\cdot 2\ln\left(\frac{1}{\cos(\sqrt{t}r)} 
%		+ \frac{\sin (\sqrt{t}r)}{\cos(\sqrt{t}r)}\right)
	\le C\sqrt{t}r\rightarrow 0
\end{align*}
as $t\downarrow0$, where $m(\{\cdot\})$ denotes Lebesgue measure of the set.

Now for $J_2^n$, Proposition \ref{p.G} implies that $Q_2(1+R_2)$ is uniformly bounded in $t$ by a function integrable in $\lambda$, thus
\begin{multline*} 
\lim_{t\downarrow0} J_2^n(t,r,z)
	= \int_{-\infty}^\infty \lim_{t\downarrow0} 
		1_{\cos(\sqrt{t}r)\cosh\lambda>1} e^{-\lambda^2/4t}
		e^{i\lambda z/2}b_{n}(\lambda) \\
		\times Q_2\left(t,\cos (\sqrt{t}r)\cosh \lambda\right)\left(1+ R_2\left(t,\cos (\sqrt{t}r)\cosh \lambda\right)\right) d\lambda. 
\end{multline*}
Lemma \ref{l.Rr} shows that $R_2\left(t,\cos(\sqrt{t}r)\cosh\lambda\right)\rightarrow0$ as $t\downarrow0$.

For derivatives of the Heisenberg kernel, note that
\begin{align*} 
\left|\frac{\partial^n}{\partial z^n}e^{i\lambda z/2}
		\frac{\lambda}{\sinh\lambda}e^{-r^2\lambda\coth\lambda/4}\right| 
	= \left|b_n(\lambda) \frac{\lambda}{\sinh\lambda} e^{-r^2\lambda\coth\lambda/4}\right| 
	\le C|\lambda|^{n+1} e^{-|\lambda|} 
\end{align*}
for all $r$, $z$, and sufficiently large $\lambda$. Thus,
\[\frac{\partial^n}{\partial z^n} h_1(r,z) 
	= \frac{1}{16\pi^2} \int_{-\infty}^\infty  e^{i \lambda z/2} b_n(\lambda)
		\frac{\lambda}{\sinh\lambda }e^{-r^2 \lambda \coth \lambda/4} d\lambda.\]
Thus, Proposition \ref{p.drq} with $n=0$ completes the proof.
\end{proof}

\begin{prop}
\label{p.dr}
For all $n\ge0$, uniformly on compact subsets of $[0,\infty)\times\mathbb{R}$,
\[\lim_{t\downarrow 0} t^2 \frac{\partial^n}{\partial r^n} (p_t(\sqrt{t}r,tz))
	= 2\pi^2 \frac{\partial^n}{\partial r^n} h_1(r,z).\]
\end{prop}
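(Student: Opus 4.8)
The plan is to follow the template of the proof of Proposition \ref{p.dz}, interchanging the roles of the $z$- and $r$-derivatives; the essential new feature is that $\partial_r^n$ now acts directly on the argument of $q_t$, so the analytic estimates of Propositions \ref{p.G} and \ref{p.drq} enter as the domination and pointwise-limit inputs, respectively. Since the factor $e^{tz^2/4}$ appearing in $p_t(\sqrt{t}r,tz)=\frac{1}{\sqrt{4\pi t}}e^{tz^2/4}\int_{-\infty}^\infty e^{-\lambda^2/4t}e^{i\lambda z/2}q_t(\cos(\sqrt{t}r)\cosh\lambda)\,d\lambda$ (after the substitution $\lambda\mapsto-\lambda$) is independent of $r$, the derivative $\partial_r^n$ passes through it and lands on $q_t$. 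Absorbing the prefactor $\sqrt{\pi}e^t/4t^{3/2}$ of $q_t$ against $t^2/\sqrt{4\pi t}$ exactly as in Proposition \ref{p.dz}, I would write
\[
t^2\frac{\partial^n}{\partial r^n}\bigl(p_t(\sqrt{t}r,tz)\bigr) = \frac{e^t e^{tz^2/4}}{8}\bigl(J_1^n(t,r,z) + J_2^n(t,r,z)\bigr),
\]
where $J_k^n$ is the integral over $\{\cos(\sqrt{t}r)\cosh\lambda \lessgtr 1\}$ of $e^{-\lambda^2/4t}e^{i\lambda z/2}\,\partial_r^n[Q_k(1+R_k)]$ for $k=1,2$.

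I would dispatch $J_1^n$ as before: for $\sqrt{t}r<\pi/4$ the domain of integration is contained in $\{|\lambda|\le\arccosh(\sqrt{2})\}$, on which (\ref{e.A}) and (\ref{e.C}) bound the integrand by $C r^n$. Since $\arccosh(1/\cos(\sqrt{t}r))\le C\sqrt{t}r\to0$, the measure of the domain collapses, so $|J_1^n|\le C r^{n+1}\sqrt{t}\to0$ uniformly on the compact set.

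For the main term $J_2^n$ I would apply dominated convergence. Proposition \ref{p.G} supplies the $t$-uniform, $L^1$ dominating function $G_n(\lambda)r^n$ (using $|e^{i\lambda z/2}|=1$ and $r$ bounded on the compact set). For the pointwise limit, $1_{\cos(\sqrt{t}r)\cosh\lambda>1}\to1$ for a.e.\ $\lambda$; expanding $\partial_r^n[Q_2(1+R_2)]$ by Leibniz, every term containing a factor $\partial_r^j R_2$ vanishes in the limit by the super-exponential decay in Lemma \ref{l.Rr} against the bounded factors $e^{-\lambda^2/4t}\partial_r^{n-j}Q_2$, leaving $\lim_{t\downarrow0}e^{-\lambda^2/4t}\partial_r^n Q_2 = \frac{\lambda}{\sinh\lambda}\partial_r^n e^{-r^2\lambda\coth\lambda/4}$ by Proposition \ref{p.drq}. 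Hence
\[
\lim_{t\downarrow0}J_2^n = \int_{-\infty}^\infty e^{i\lambda z/2}\frac{\lambda}{\sinh\lambda}\,\partial_r^n e^{-r^2\lambda\coth\lambda/4}\,d\lambda,
\]
and since $\tfrac{1}{8}e^t e^{tz^2/4}\to\tfrac18$ uniformly on compact $z$-sets, comparison with the integral formula for $\partial_r^n h_1(r,z)$ (which carries the prefactor $1/16\pi^2$) yields the stated limit $2\pi^2\,\partial_r^n h_1$.

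The main obstacle lies not in this assembly, which is a direct parallel of Proposition \ref{p.dz}, but in the inputs already established: the genuine difficulty is the competition between the exponential growth $e^{\arccosh^2(\cos(\sqrt{t}r)\cosh\lambda)/4t}$ produced inside $\partial_r^n Q_2$ and the Gaussian weight $e^{-\lambda^2/4t}$, together with the need (via (\ref{e.A1}) and Remark \ref{r.m}) to check that every power of $t$ arising from the chain rule applied to $\cos(\sqrt{t}r)$ carries a nonnegative exponent, so that no factor diverges. These are precisely the points settled in Propositions \ref{p.G} and \ref{p.drq}, so once those are invoked the remaining work—the collapse of $J_1^n$ and the vanishing of the $R_2$ contributions—is routine. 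For the uniformity I would note that the dominating function $G_n(\lambda)r^n$ is uniform over the compact set and the limiting integrand is continuous in $(r,z)$, so a standard uniform dominated convergence argument applies.
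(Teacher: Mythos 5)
Your proposal is correct and follows essentially the same route as the paper: the same decomposition into $e^{t}e^{tz^2/4}(I_1^n+I_2^n)/8$ over the regions $\cos(\sqrt{t}r)\cosh\lambda\lessgtr 1$, the same collapse of the first integral via (\ref{e.A}), (\ref{e.C}) and the shrinking domain, and the same dominated-convergence argument on the second using Proposition \ref{p.G} for the domination, Lemma \ref{l.Rr} to kill the $R_2$ terms, and Proposition \ref{p.drq} for the pointwise limit. The only differences are cosmetic (notation and an explicit mention of the Leibniz expansion that the paper leaves implicit).
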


\begin{proof}
First note that, for derivatives of the Heisenberg kernel, equation (\ref{e.drh}) implies that
\[ \left|\frac{\lambda}{\sinh\lambda }\frac{\partial^n}{\partial r^n}e^{-r^2\lambda\coth\lambda/4}\right|
	\le C|\lambda|^{n+1}e^{-|\lambda|} \]
for all $r$ and sufficiently large $\lambda$. Since $|\lambda|^{n+1}e^{-|\lambda|}$ is integrable, this implies that
\[\frac{\partial^n}{\partial r^n} h_1(r,z) 
	= \frac{1}{16\pi^2} \int_{-\infty}^\infty  e^{i \lambda z/2} 
		\frac{\lambda}{\sinh\lambda }\frac{\partial^n}{\partial r^n} e^{-r^2 \lambda \coth \lambda/4} d\lambda.\]

Now let $K$ be a compact subset of $[0,\infty)\times\mathbb{R}$, and let $t>0$ be sufficiently small that $(\sqrt{t}r,tz)\in[0,\pi/4]\times[-\pi,\pi]$ for all $(r,z)\in K$.
Note that Proposition \ref{p.G} and equation (\ref{e.qtx}) allow us to write
\begin{align*}
t^2 \frac{\partial^n}{\partial r^n}( p_t(\sqrt{t}r,tz))
	&= \frac{t^{3/2}}{\sqrt{4\pi }} e^{tz^2/4} \int_{-\infty}^\infty 
		e^{-\lambda^2/4t}e^{i\lambda z/2} 
		\frac{\partial^n}{\partial r^n}q_t(\cos (\sqrt{t}r)\cosh \lambda) d\lambda \\
	&= \frac{e^te^{tz^2/4}}{8}  
		 (I^n_1(t,r,z)+ I^n_2(t,r,z))   
\end{align*}
where
\begin{multline*} 
I_1^n(t,r,z) 
	:= \int_{\cos(\sqrt{t}r)\cosh\lambda<1} e^{-\lambda^2/4t}
		e^{i\lambda z/2} \\
		\times \frac{\partial^n}{\partial r^n}\left[Q_1\left(t,\cos (\sqrt{t}r)\cosh \lambda\right)\left(1+ R_1\left(t,\cos (\sqrt{t}r)\cosh \lambda\right)\right)\right] d\lambda 
\end{multline*}
and
\begin{multline*}
I_2^n(t,r,z):= \int_{\cos(\sqrt{t}r)\cosh\lambda>1} e^{-\lambda^2/4t}
		e^{i\lambda z/2} \\
		\times \frac{\partial^n}{\partial r^n}\left[Q_2\left(t,\cos (\sqrt{t}r)\cosh \lambda\right)\left(1+ R_2\left(t,\cos (\sqrt{t}r)\cosh \lambda\right)\right)\right] d\lambda. 
\end{multline*}
As in the proof of Proposition \ref{p.dz} we may use (\ref{e.A}) and (\ref{e.C}) to say that $I_1^n (t,r,z)\rightarrow0$ as $t\downarrow0$.
	Also, for $I_2^n$, Proposition \ref{p.G} (and more specifically (\ref{e.A}), (\ref{e.D}), (\ref{e.D1}), and (\ref{e.D2})) show that $e^{-\lambda^2/4t}\frac{\partial^n}{\partial r^n}Q_2(1+R_2)$ is uniformly bounded in $t$ by a function integrable in $\lambda$, thus
\begin{multline*} 
\lim_{t\downarrow0} I_2^n(t,r,z)
	= \int_{-\infty}^\infty \lim_{t\downarrow0} 
		1_{\cos(\sqrt{t}r)\cosh\lambda>1} e^{-\lambda^2/4t}
		e^{i\lambda z/2} \\
		\times \frac{\partial^n}{\partial r^n}\left[Q_2\left(t,\cos (\sqrt{t}r)\cosh \lambda\right)\left(1+ R_2\left(t,\cos (\sqrt{t}r)\cosh \lambda\right)\right)\right] d\lambda. 
\end{multline*}
Lemma \ref{l.Rr} shows that $\frac{\partial^m}{\partial r^m}R_2\left(t,\cos(\sqrt{t}r)\cosh\lambda\right)\rightarrow0$ for all $m\ge0$ as $t\downarrow0$, 
and Proposition \ref{p.drq} completes the proof.
\end{proof}

\section{Asymptotics of Hermite functions on the CR sphere}

We now turn to the general case of odd-dimensional spheres $\mathbb{S}^{2d+1}$. We use here the notation and coordinates introduced in Section \ref{s.intro-sphere}. Additionally, for $w\in\mathbb{CP}^d$ let $\rho=\|w\|=\sqrt{\sum_{j=1}^d |w_j|^2}$ and define $r$ by $\rho=\tan r$. In Proposition 3.2 of \cite{BaudoinWang2013}, Baudoin and Wang prove that,
for $t>0$, $r\in[0,\pi/2)$, and $z\in[-\pi,\pi]$, the fundamental solution of the subelliptic heat equation on $\mathbb{S}^{2d+1}$ emitted from the north pole is given by
\[ p_{t,d}(r,z) = \frac{1}{\sqrt{4\pi t}}\int_\mathbb{R}e^{(\lambda+iz)^2/4t} q_{t,d}(\cos r\cosh\lambda)\,d\lambda \]
where $q_{t,d}(\cos\delta)$ is the fundamental solution to the Riemannian heat equation on $\mathbb{S}^{2d+1}$ (here $\delta$ is again the Riemannian distance from the north pole).

The Riemannian heat kernel for $\mathbb{S}^{2d+1}$ is well-known and may be written as
\[ q_{t,d}(\cos\delta) 
	= e^{d^2t}\left(-\frac{1}{2\pi\sin\delta} \frac{\partial}{\partial\delta}\right)^d V(t,\delta) \]
where
\begin{align*} 
V(t,\delta) &= \frac{1}{\sqrt{4\pi t}} \sum_{k\in\mathbb{Z}}e^{-(\delta+2k\pi)^2/4t},
%	&=  \frac{1}{\sqrt{4\pi t}}e^{-\delta^2/4t}\left(1 
%	+  2\sum_{k=1}^\infty e^{-\pi^2k^2/t}\cosh\left(\frac{\pi k\delta}{t}\right) \right).
\end{align*}
see for example Section 8 of \cite{Camporesi1990}. Note by the derivation of $q_{t,d+1}$ in that reference, or here by direct computation, that we have
\begin{align*} 
q_{t,d+1}(\cos\delta) 
	&= e^{(d+1)^2t}\left(-\frac{1}{2\pi\sin\delta} 
		\frac{\partial}{\partial\delta}\right)^{d+1} V(t,\delta) \\
	&= e^{(d+1)^2t}\left(-\frac{1}{2\pi\sin\delta} 
		\frac{\partial}{\partial\delta}\right)e^{-d^2t}e^{d^2t}\left(-\frac{1}{2\pi\sin\delta} 
		\frac{\partial}{\partial\delta}\right)^{d} V(t,\delta) \\
	&= e^{(d+1)^2t}e^{-d^2t}\left(-\frac{1}{2\pi\sin\delta} 
		\frac{\partial}{\partial\delta}\right)q_{t,d}(\cos\delta) \\
	&= e^{(2d+1)t}\cdot -\frac{1}{2\pi\sin\delta} q_{t,d}'(\cos\delta)\cdot-\sin\delta \\
	&= \frac{1}{2\pi}e^{(2d+1)t} q_{t,d}'(\cos\delta).
\end{align*}
For $\cos\delta=x$, we then have that for $d\ge2$
\begin{align*} 
q_{t,d}(x) &= \frac{1}{2\pi}e^{(2d-1)t} q_{t,d-1}'(x) \\
	&= \frac{1}{2\pi}e^{(2d-1)t} \cdot \frac{1}{2\pi}e^{(2d-3)t} q_{t,d-2}''(x) 
	= \left(\frac{1}{2\pi}\right)^{d-1} e^{(d^2-1)t} q_{t,1}^{(d-1)}(x).
\end{align*}
Note that $q_{t,1}$ is the same $q_t$ from Section \ref{s.qt}, modulo a factor of $\pi^2$ that comes from the choice of normalization of the Haar measure; that is, $\pi^2q_{t,1}=q_t$. Thus, using the notation adopted in Section \ref{s.qt}, and in particular (\ref{e.qtx}),
\begin{align} 
q_{t,d}(x) &\label{e.qs}= \frac{1}{\pi^2}\left(\frac{1}{2\pi}\right)^{d-1} e^{(d^2-1)t} q_{t}^{(d-1)}(x) \\
%	&= \frac{1}{\pi^2}\left(\frac{1}{2\pi}\right)^{d-1} e^{(d^2-1)t} \frac{\partial^d}{\partial x^d}\frac{\sqrt{\pi}e^t}{4t^{3/2}} \cdot\left\{ \begin{array}{ll}
%	Q_1(t,x) \left(1+ R_1(t,x) \right) & \text{for } x=\cos\theta\in(-1+\varepsilon,1] \\
%	Q_2(t,x) \left(1+ R_2(t,x) \right) & \text{for } x=\cosh\theta\ge1
%	\end{array} \right. . \\
	&\notag= \left(\frac{1}{2\pi}\right)^{d+1}  \frac{\sqrt{\pi}e^{d^2t}}{t^{3/2}}
	\frac{\partial^{d-1}}{\partial x^{d-1}} 
	\left\{ \begin{array}{ll}
	Q_1(t,x) \left(1+ R_1(t,x) \right) & \text{for } x\in[0,1] \\
	Q_2(t,x) \left(1+ R_2(t,x) \right) & \text{for } x\ge1
	\end{array} \right. .
\end{align}

Now to perform the analysis, we must first re-write the vector fields in the inhomogeneous coordinates. Recall from Section \ref{s.intro-sphere} that for $k=1,\ldots,d$
\[ z_k = \frac{w_ke^{i z}}{\sqrt{1+\rho^2}} \quad \text{ and } \quad 
	z_{d+1} = \frac{e^{i z}}{\sqrt{1+\rho^2}}. \]
Thus, for $k=1,\ldots,d$,
\begin{align*} 
\frac{\partial}{\partial z_k} 
	&= \sqrt{1+\rho^2}e^{-i z}\frac{\partial}{\partial w_k} 
	%= \frac{e^{-i z}}{\cos r}\frac{\partial}{\partial w_k} 
%\\
%\frac{\partial}{\partial \overline{z}_k} 
%	&= \sqrt{1+\rho^2}e^{i z}\frac{\partial}{\partial \overline{w}_k} 
%	= \frac{e^{i z}}{\cos r}\frac{\partial}{\partial \overline{w}_k} 
\end{align*}
and
\begin{align*} 
\frac{\partial}{\partial z_{d+1}} 
	&= -\sqrt{1+\rho^2}e^{-i z}\left(\sum_{k=1}^d w_k\frac{\partial}{\partial w_k} -\frac{1}{2i}\frac{\partial}{\partial z}\right)
	%= \frac{e^{-i z}}{\cos r}\left(\sum_{k=1}^d w_k\frac{\partial}{\partial w_k} -\frac{1}{2i}\frac{\partial}{\partial z}\right) 
%\\
%\frac{\partial}{\partial \overline{z}_{d+1}} 
%	& = -\sqrt{1+\rho^2}e^{i z}\left(\sum_{k=1}^d \overline{w}_k\frac{\partial}{\partial \overline{w}_k} + \frac{1}{2i}\frac{\partial}{\partial z}\right) 
%	= \frac{e^{i z}}{\cos r}\left(\sum_{k=1}^d \overline{w}_k\frac{\partial}{\partial \overline{w}_k} + \frac{1}{2i}\frac{\partial}{\partial z}\right). 
\end{align*}
In particular, this gives that
\begin{multline*} 
\mathcal{S} = \sum_{k=1}^{d+1} z_k \frac{\partial}{\partial {z_k}} 
	= \sum_{k=1}^{d} \frac{w_ke^{i z}}{\sqrt{1+\rho^2}} \sqrt{1+\rho^2}e^{-i z}\frac{\partial}{\partial w_k} \\
	 + \frac{e^{i z}}{\sqrt{1+\rho^2}}\cdot -\sqrt{1+\rho^2}e^{-i z}\left(\sum_{k=1}^d w_k\frac{\partial}{\partial w_k} 
	-\frac{1}{2i}\frac{\partial}{\partial z}\right) 
	=  \frac{1}{2i}\frac{\partial}{\partial z}
\end{multline*}
and thus for $j=1,\ldots,d$
\begin{align*} 
T_j &= \frac{\partial}{\partial {z_j}} - \overline{z}_j\mathcal{S}
	= \sqrt{1+\rho^2}e^{-i z}\frac{\partial}{\partial w_j}
		- \frac{\overline{w}_je^{-i z}}{\sqrt{1+\rho^2}}
		\frac{1}{2i}\frac{\partial}{\partial z}.
\end{align*}
Recall that, for a smooth function $f:\mathbb{S}^{2d+1}\rightarrow\mathbb{R}$, we're interested in the following scaling for the vector fields
\begin{multline*}
\sqrt{t}(T_jf)(\sqrt{t}w_1,\ldots,\sqrt{t}w_n,t z)
%	&=\sqrt{t}\left(\sqrt{1+t\rho^2}e^{-it z}\frac{\partial f}{\partial w_j}(\sqrt{t}w_1,\ldots,\sqrt{t}w_n,t z)
%		- \frac{\sqrt{t}\overline{w}_je^{-it z}}{\sqrt{1+t\rho^2}}
%		\frac{1}{2i}\frac{\partial f}{\partial z}(\sqrt{t}w_1,\ldots,\sqrt{t}w_n,t z)\right) \\
	=\sqrt{1+t\rho^2}e^{-it z}\frac{\partial }{\partial w_j}\left(f(\sqrt{t}w_1,\ldots,\sqrt{t}w_n,t z)\right) \\
		- \frac{\overline{w}_je^{-it z}}{\sqrt{1+t\rho^2}}
		\frac{1}{2i}\frac{\partial }{\partial z}\left(f(\sqrt{t}w_1,\ldots,\sqrt{t}w_n,t z)\right).
\end{multline*}
Thus, if we take
\[ T_j^t:=\sqrt{1+t\rho^2}e^{-it z}\frac{\partial }{\partial w_j}
		- \frac{\overline{w}_je^{-it z}}{\sqrt{1+t\rho^2}}
		\frac{1}{2i}\frac{\partial }{\partial z} \] 
then
\[ \sqrt{t}(T_jf)(\sqrt{t}w_1,\ldots,\sqrt{t}w_n,t z) 
	= T_j^t( f(\sqrt{t}w_1,\ldots,\sqrt{t}w_n,t z)) \]
and
\begin{equation*}
%\label{e.josh} 
T_j^t \rightarrow \frac{\partial }{\partial w_j}
		+ \frac{1}{2} i\overline{w}_j
		\frac{\partial }{\partial z} 
	= \mathcal{Z}_j(w_1,\ldots,w_d, z)
\end{equation*}
as $t\downarrow0$, where $\mathcal{Z}_j=\tilde{\mathcal{Y}}_j-i\tilde{\mathcal{X}}_j$ is the vector field on the $2d+1$-dimensional Heisenberg group $\mathbb{H}^{2d+1}$.

Similarly, we may write
\begin{align*}
T_{d+1} &= \frac{\partial}{\partial {z_{d+1}}} 
		- \overline{z}_{d+1}\sum_{k=1}^{d+1} z_k \frac{\partial}{\partial {z_k}} \\
	&= -\sqrt{1+\rho^2}e^{-i z}\left(\sum_{k=1}^d w_k\frac{\partial}{\partial w_k} -\frac{1}{2i}\frac{\partial}{\partial z}\right)
	- \frac{e^{-i z}}{\sqrt{1+\rho^2}} \frac{1}{2i}\frac{\partial}{\partial z} \\
	&= -\sqrt{1+\rho^2}e^{-i z}\sum_{k=1}^d w_k\frac{\partial}{\partial w_k} 
		+ \frac{e^{-i z}}{2i}\frac{\rho^2}{\sqrt{1+\rho^2}}\frac{\partial}{\partial z}.
\end{align*}
Note that
\begin{equation*}
T_{d+1} = -w\cdot T := -\sum_{k=1}^d w_kT_k. 
\end{equation*}
This implies that
\begin{align*}
(T_{d+1}f)(\sqrt{t}w_1,\ldots,\sqrt{t}w_d,t z) 
	&= -\left(\sum_{k=1}^d w_k T_kf\right)(\sqrt{t}w_1,\ldots,\sqrt{t}w_d,t z) \\
	&= -\sum_{k=1}^d \sqrt{t}w_k (T_kf) (\sqrt{t}w_1,\ldots,\sqrt{t}w_d,t z)\\
	&= -\sum_{k=1}^d w_k T_k^t (f(\sqrt{t}w_1,\ldots,\sqrt{t}w_d,t z)),
\end{align*}
and thus
\[ T_{d+1}^t := - w\cdot T^t
	:= - \sum_{k=1}^d w_k T_k^t
	\rightarrow 
	%- \sum_{k=1}^d w_k\frac{\partial }{\partial w_k}
	%	+ \frac{\rho^2}{2i}
	%	\frac{\partial }{\partial z}
	%= 
	- \sum_{k=1}^d w_k\mathcal{Z}_k.
%	=: - w\cdot \mathcal{Z} 
\] 
as $t\downarrow0$. Finally, note that, in the inhomogeneous coordinates
\[ T_0 = \frac{\partial}{\partial  z},\]
and thus
\[ t(T_0f)(\sqrt{t}w_1,\ldots,\sqrt{t}w_n,t z) = T_0(f(\sqrt{t}w_1,\ldots,\sqrt{t}w_n,t z)).\]

It is straightforward to verify that the derivatives of the coefficients of the $T_j$'s have the correct limiting values as the derivatives of the coefficients of the $\mathcal{Z}_j$'s on $\mathbb{H}^{2d+1}$. Recall that the subelliptic heat kernel is actually a function of the form $f=f(r, z)$, where
\[ r = \arctan \rho = \arctan\left(\sqrt{\sum_{j=1}^d w_j\overline{w}_j}\right), \]
In particular, for any such function
\begin{align*}
	\sqrt{t}\frac{\partial f}{\partial w_j} (\sqrt{t}r,tz) 
		= \frac{\partial}{\partial r} (f(\sqrt{t}r,tz)) \cdot \frac{\sqrt{t}\overline{w}_j}{2\sqrt{t}\rho(1+t\rho^2)}  
\end{align*}
where
\[ \frac{\sqrt{t}\overline{w}_j}{2\sqrt{t}\rho(1+t\rho^2)} = \frac{\overline{w}_j}{2\rho(1+t\rho^2)} \rightarrow
	\frac{\overline{w}_j}{2\rho}, \]
$t\downarrow0$. Note this is the correct limit for the Heisenberg object, since on $\mathbb{H}^{2d+1}$ we have $r=\sqrt{\sum_{j=1}^d w_j\overline{w}_j}$ and thus
\[ \frac{\partial f}{\partial w_j} (r,z)
		= \frac{\partial f}{\partial r} (r,z) \cdot \frac{\overline{w}_j}{2\rho}. \]

Again taking into account the product form of the integrand in the representation of $p_{t,d}$, analogously to the $SU(2)$ case, to prove Theorem \ref{t.sphere} it suffices to verify the following: For any $n\ge0$,
\begin{equation}
\label{e.dzn2} 
\lim_{t\downarrow0} 
	t^{d+1}\frac{\partial^n}{\partial z^n}  (p_{t,d}(\sqrt{t}r,tz) )
	= 2 \frac{\partial^n}{\partial z^n} h_{1,d}(r,z)
\end{equation}
and 
\begin{equation} 
\label{e.drn2}
\lim_{t\downarrow0} 
	t^{d+1} \frac{\partial^n}{\partial r^n} (p_{t,d}(\sqrt{t}r,tz))
	= 2 \frac{\partial^n}{\partial r^n}h_{1,d}(r,z)
\end{equation}
where 
\[ h_{t,d}(r,z) = \frac{1}{(4\pi)^{d+1}}\int_{\mathbb{R}} e^{i\lambda z/2}
	\left(\frac{\lambda}{\sinh \lambda t}\right)^d e^{-r^2\lambda\coth\lambda t/4}\,d\lambda.\]
is the subelliptic kernel on the Heisenberg group $\mathbb{H}^{2d+1}$ at time $t=1$; see for example \cite{Gaveau1977}.
Note again that this differs by a factor of $\pi^2$ from the $SU(2)$ limit due to the choice there of normalization of the volume measure. Also, the factor of $t$ appearing in (\ref{e.dzn2}) and (\ref{e.drn2}) is half the homogeneous dimension $Q=2d+2$ of the sub-Riemannian spheres. 

Given the relationship between the Riemannian kernel $q_{t,d}$ on $\mathbb{S}^{2d+1}$ and $\pi^2q_{t,1}=q_t$ on $\mathbb{S}^3\cong SU(2)$ identified in (\ref{e.qs}), much of the analysis of Section \ref{s.hermite} can be directly used to prove (\ref{e.dzn2}) and (\ref{e.drn2}). 
Thus, we provide here only a partial proof for the radial derivatives, including the main adaptation of the analysis used in the $SU(2)$ case. The next proposition is analogous to Proposition \ref{p.drq} and its proof is similar.

\begin{prop}
\label{p.drq2}
Let $n\ge0$, $r\ge0$, and $\lambda\in\mathbb{R}$. Then
\begin{multline*}
\lim_{t\downarrow 0} t^{d-1} e^{-\lambda^2/4t}\frac{\partial^n}{\partial r^n} Q_2^{(k)}\left(t,\cos(\sqrt{t}r)\cosh\lambda\right) \\
	=\left\{ \begin{array}{ll} 0 & \text{ if } k<d-1 \\
		2\left(\frac{\lambda}{2\sinh\lambda} \right)^d
		\frac{\partial^n}{\partial r^n} e^{-r^2\lambda\coth\lambda/4} &\text{ if } k=d-1
	\end{array} \right. . 
\end{multline*}
\end{prop}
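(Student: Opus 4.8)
The plan is to mirror the proof of Proposition \ref{p.drq}, carrying one extra layer of bookkeeping that comes from the $d-1$ additional $x$-derivatives encoded in $Q_2^{(k)}$. The starting point is the identity $Q_2^{(k)}(t,x)=2t\,\partial_x^{k+1}e^{\arccosh^2 x/4t}$ from the proof of Lemma \ref{l.Q2}, together with its explicit form (\ref{e.nearccosh2}) (taken with $n+1=k+1$): each multi-index $\alpha\in\mathcal{J}_{k+1}$ contributes a term carrying the power $t^{1-|\alpha|}$. By Remark \ref{r.m} the largest value $|\alpha|=k+1$ is attained \emph{only} at $\alpha=(k+1,0,\dots,0)$, and since $\frac{d}{dx}\arccosh^2 x=2\arccosh x/\sqrt{x^2-1}$ this single dominant term equals
\[
2^{-k}\,t^{-k}\,e^{\arccosh^2 x/4t}\left(\frac{\arccosh x}{\sqrt{x^2-1}}\right)^{k+1},
\]
while every other $\alpha$ has $|\alpha|\le k$ and hence carries a strictly higher power of $t$ (namely $t^{\,1-|\alpha|}\ge t^{\,1-k}$).

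First I would substitute $x=\cos(\sqrt{t}r)\cosh\lambda$, multiply by $t^{d-1}e^{-\lambda^2/4t}$, and apply $\partial_r^n$. Writing $\Phi:=e^{\arccosh^2(\cos(\sqrt{t}r)\cosh\lambda)/4t}$ and $\psi:=\arccosh(\cos(\sqrt{t}r)\cosh\lambda)/\sqrt{(\cos(\sqrt{t}r)\cosh\lambda)^2-1}$, Leibniz' rule splits each resulting term into $\partial_r^{m}\Phi$ times $r$-derivatives of the bounded algebraic factors (in the dominant term, powers of $\psi$). As in Proposition \ref{p.drq}, any $r$-derivative landing on $\psi$ (or on any purely algebraic prefactor) produces a factor $\tfrac{dx}{dr}=-\sqrt{t}\sin(\sqrt{t}r)\cosh\lambda=O(\sqrt{t})$ with no compensating negative power of $t$, so such a derivative forces the term to $0$ in the limit; this is exactly the mechanism governed by Lemma \ref{l.fr} and equation (\ref{e.A1}). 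By contrast $e^{-\lambda^2/4t}\partial_r^m\Phi$ stays bounded and converges to $\partial_r^m e^{-r^2\lambda\coth\lambda/4}$ by Lemma \ref{l.drq}, since the $1/t$ generated by differentiating the exponent is cancelled by the $\sqrt{t}\sin(\sqrt{t}r)$ of the chain rule. Consequently only the term with all $n$ derivatives on $\Phi$ survives.

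The power-counting in $t$ then isolates the answer. The surviving contribution from the dominant multi-index carries the overall factor $t^{d-1}\cdot t^{-k}=t^{d-1-k}$: for $k<d-1$ this tends to $0$, while for $k=d-1$ it is identically $1$. All subleading multi-indices ($|\alpha|\le k$) contribute, in the worst case, $t^{d-1}\cdot t^{\,1-|\alpha|}=t^{\,d-|\alpha|}$ with exponent $\ge d-k\ge 1$, which vanishes for every $k\le d-1$ because $\psi$, its $r$-derivatives, and $e^{-\lambda^2/4t}\Phi$ are all bounded (the latter converging by (\ref{e.easy2})). Hence for $k=d-1$ the limit equals
\[
2^{-(d-1)}\Big(\lim_{t\downarrow0}\psi\Big)^{d}\,\lim_{t\downarrow0}e^{-\lambda^2/4t}\partial_r^n\Phi
	=2^{-(d-1)}\left(\frac{\lambda}{\sinh\lambda}\right)^{d}\partial_r^n e^{-r^2\lambda\coth\lambda/4},
\]
using (\ref{e.easy1}) and Lemma \ref{l.drq}; since $2^{-(d-1)}=2\cdot 2^{-d}$ this is exactly $2\big(\tfrac{\lambda}{2\sinh\lambda}\big)^d\partial_r^n e^{-r^2\lambda\coth\lambda/4}$, as claimed.

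The main obstacle is the bookkeeping in the middle step: one must simultaneously track (i) the power of $t$ attached to each multi-index $\alpha$, (ii) the powers of $\sin(\sqrt{t}r)/\sqrt{t}\to r$ generated when $r$-derivatives hit algebraic factors, controlled precisely by (\ref{e.A1}), and (iii) the fact that derivatives landing on $\Phi$ do \emph{not} degrade the power of $t$. Keeping these three effects separate is what guarantees that only the single term $\alpha=(k+1,0,\dots,0)$ with $m=n$ survives and that it produces the stated constant; the uniform control needed to pass this pointwise limit under the $\lambda$-integral is deferred to the analogue of Proposition \ref{p.dr}, while everything here follows the template of Proposition \ref{p.drq}.
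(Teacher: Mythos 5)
Your proof is correct and follows essentially the same route as the paper's: the identity $Q_2^{(k)}=2t\,\partial_x^{k+1}e^{\arccosh^2 x/4t}$, the chain-rule expansion over $\mathcal{J}_{k+1}$ combined with Leibniz in $r$, and the power counting in $t$ that isolates the single surviving term $\alpha=(k+1,0,\dots,0)$ with all $n$ derivatives falling on the exponential, closed by Lemma \ref{l.drq} and equation (\ref{e.easy1}). The only cosmetic difference is that the paper disposes of the case $k<d-1$ separately by a direct application of Lemma \ref{l.fr} together with the bound of Lemma \ref{l.Q2}, whereas you run both cases uniformly through the same expansion.
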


\begin{proof} Fix $r$ and $\lambda$ and assume that $t$ is sufficiently small that $\cos(\sqrt{t}r)\cosh\lambda>1$. We have that
\begin{align*}
\frac{\partial^n}{\partial r^n} &Q_2^{(k)}\left(t,\cos(\sqrt{t}r)\cosh\lambda\right) \\
	&= t^{n/2}\sum_{\alpha\in\mathcal{J}_n}c_{\alpha n} 
		Q^{(k+|\alpha|)}(\cos\sqrt{t}r\cosh\lambda)(\cosh\lambda)^{|\alpha|} \prod_{\ell=1}^n\left(\left(\frac{\partial^\ell}{\partial r^\ell}\cos\right)(\sqrt{t} r)\right)^{\alpha_\ell}\\
	&= \sum_{\alpha\in\mathcal{J}_n} O\left(t^{n/2}\frac{1}{t^{k+|\alpha|}}\sin(\sqrt{t}r)^{|\alpha|_{\mathrm{odd}}}\right)
\end{align*}
as $t\downarrow0$. Since $k\le d-1$ and by Remark \ref{r.m}, it follows that
\[ d-1 + n/2 + |\alpha|_{\mathrm{odd}}/2 - (k+|\alpha|)\ge0 \]
where the inequality must be strict when $k<d-1$. Thus, the result follows for the case $k<d-1$.

For $k=d-1$, from the proof of Lemma \ref{l.Q2} we may write
\[
Q_2^{(d-1)}(t,x) 
	= 2te^{\arccosh^2 x/4t} \sum_{\alpha\in\mathcal{J}_{d}}  
		\frac{c_{\alpha d}}{(4t)^{|\alpha|}}
		F_\alpha(x) \]
where 
\[ F_\alpha(x) = \prod_{j=1}^{d}
		\left(\frac{d^j}{dx^j}\arccosh^2 x\right)^{\alpha_j}.\]
%So we have that
%	\[ Q_2^{(d)}(t,x) = 2te^{\arccosh^2 x/4t}\left(\left(\frac{\arccosh x}{4t\sqrt{x^2-1}}\right)^d 
%		+ \sum_{\alpha\in\mathcal{J}_{d}'}  
%		\frac{c_{\alpha d}}{(4t)^{|\alpha|}} F_\alpha(x)\right) 
%\]
%where $\mathcal{J}_d'=\mathcal{J}_d\setminus\{(d,0,\ldots,0)\}$. 
Thus, by Lemma \ref{l.fr}, (\ref{e.jarccosh2}), and (\ref{e.easy2})
\begin{align*}
&t^{d-1}e^{-\lambda^2/4t}\frac{\partial^n}{\partial r^n}Q_2^{(d-1)}(t,\cos(\sqrt{t}r)\cosh\lambda) \\
	&= t^{d-1}e^{-\lambda^2/4t} \\
	& \times 2t\sum_{j=0}^{n} {n\choose j} \frac{\partial^j}{\partial r^j}e^{\arccosh^2 (\cos(\sqrt{t}r)\cosh\lambda)/4t} \sum_{\alpha\in\mathcal{J}_{d}}  
		\frac{c_{\alpha d}}{(4t)^{|\alpha|}} \frac{\partial^{n-j}}{\partial r^{n-j}} F_\alpha(\cos(\sqrt{t}r)\cosh\lambda) \\
	&= \sum_{j=0}^{n} \sum_{\alpha\in\mathcal{J}_{d}} 
		\sum_{\beta\in\mathcal{J}_{j}}\sum_{\gamma\in\mathcal{J}_{n-j}} 
		O\left(	t^{d-1}\cdot\frac{1}{t^{|\beta|}}t^{j/2}\sin(\sqrt{t}r)^{|\beta|_{\mathrm{odd}}} 
		\cdot \frac{t^{(n-j)/2}}{t^{|\alpha|-1}}\sin(\sqrt{t}r)^{|\gamma|_{\mathrm{odd}}}.  \right)
\end{align*}
(Again the implicit constants do depend on $\lambda$ and $r$, but in a controllable way.) Note that %that $|\alpha|-1+|\beta| < d-1 + (j/2+|\beta|_{\mathrm{odd}}/2)$, and thus the only contribution in the limit is from the term $\alpha=(d,0,\ldots,0)$. 
%Thus, we have
	\[ \{d-1 + (j/2+|\beta|_{\mathrm{odd}}/2) + ((n-j)/2 + |\gamma|_{\mathrm{odd}}/2)\} - \{|\alpha|-1 + |\beta|\} \ge 0 \]
with equality only when $j= n$ and $\alpha=(d,0,\ldots0)$, where
\[ F_{(d,0,\ldots,0)}(x) = \left(\frac{2\arccosh x}{\sqrt{x^2-1}}\right)^d. \]
Thus, the only non-zero contribution in the limit comes from the term where $j= n$ and $\alpha=(d,0,\ldots0)$, and so
\begin{align*}
&\lim_{t\downarrow0} t^{d-1}e^{-\lambda^2/4t}\frac{\partial^n}{\partial r^n}Q_2^{(d-1)}(t,\cos(\sqrt{t}r)\cosh\lambda) \\
%	&= \lim_{t\downarrow0} t^{d-1}e^{-\lambda^2/4t}\cdot 2t 
%		\sum_{j=0}^{n} {n\choose j} \frac{\partial^j}{\partial r^j}e^{\arccosh^2 (\cos(\sqrt{t}r)\cosh\lambda)/4t} \\
%	&\qquad\qquad\qquad\qquad\times
%		 \frac{\partial^{n-j}}{\partial r^{n-j}} \left(\frac{2\arccosh (\cos(\sqrt{t}r)\cosh\lambda)}{4t\sqrt{(\cos(\sqrt{t}r)\cosh\lambda)^2-1}}\right)^d \\
	&= 2 \lim_{t\downarrow0} e^{-\lambda^2/4t}\left(\frac{\partial^n}{\partial r^n}e^{\arccosh^2 (\cos(\sqrt{t}r)\cosh\lambda)/4t}\right)  
		 \left(\frac{\arccosh (\cos(\sqrt{t}r)\cosh\lambda)} 
		{2\sqrt{(\cos(\sqrt{t}r)\cosh\lambda)^2-1}}\right)^d.
\end{align*}
Again applying Lemma \ref{l.drq} and equation (\ref{e.easy1}) finishes the proof.
\end{proof}

In light of Proposition \ref{p.drq2}, the proof of (\ref{e.dzn2}) now works exactly as in the proof of Proposition \ref{p.dz} in the $SU(2)$ case. The following gives the proof of (\ref{e.drn2}).

\begin{prop}
\label{p.ddr}
Uniformly on compact subsets of $[0,\infty)\times\mathbb{R}$,
	\[\lim_{t\downarrow 0} t^{d+1}\frac{\partial^n}{\partial r^n} (p_{t,d}(\sqrt{t}r,tz))
	= 2 \frac{\partial^n}{\partial r^n} h_{1,d}(r,z).\]
\end{prop}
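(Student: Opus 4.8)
The plan is to mirror the proof of Proposition \ref{p.dr} for $SU(2)$, with the role there played by Proposition \ref{p.drq} now taken over by the newly established Proposition \ref{p.drq2}. First I would insert the identity (\ref{e.qs}), which expresses $q_{t,d}$ as $(2\pi)^{-(d+1)}\tfrac{\sqrt{\pi}e^{d^2t}}{t^{3/2}}\tfrac{\partial^{d-1}}{\partial x^{d-1}}$ of the piecewise kernel, into the integral representation of $p_{t,d}$. After the substitution $\lambda\mapsto-\lambda$ used in Proposition \ref{p.dz}, this writes
\[ t^{d+1}\frac{\partial^n}{\partial r^n}p_{t,d}(\sqrt{t}r,tz) = C_d\, e^{d^2 t}e^{tz^2/4}\int_{\mathbb{R}} e^{-\lambda^2/4t}e^{i\lambda z/2}\, t^{d-1}\frac{\partial^n}{\partial r^n}\frac{\partial^{d-1}}{\partial x^{d-1}}\mathcal{Q}(t,x)\Big|_{x=\cos(\sqrt{t}r)\cosh\lambda}\,d\lambda, \]
where $C_d=\tfrac12(2\pi)^{-(d+1)}$ and $\mathcal{Q}$ denotes the piecewise expression in (\ref{e.qs}). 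The crucial bookkeeping point is that $t^{d+1}\cdot t^{-3/2}\cdot t^{-1/2}=t^{d-1}$, so that the integrand carries precisely the factor $t^{d-1}$ appearing in Proposition \ref{p.drq2}. As in Proposition \ref{p.dr}, I then split the integral at $\cos(\sqrt{t}r)\cosh\lambda=1$ into $I_1^n$ (the $Q_1(1+R_1)$ branch) and $I_2^n$ (the $Q_2(1+R_2)$ branch).

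For $I_1^n$ the plan is to show it vanishes. The domain $\{|\lambda|<\arccosh(1/\cos\sqrt{t}r)\}$ has Lebesgue measure $O(\sqrt{t}\,r)$, and on it $\cosh\lambda\le\sqrt2$. Expanding $\tfrac{\partial^{d-1}}{\partial x^{d-1}}[Q_1(1+R_1)]$ by Leibniz and applying Lemma \ref{l.fr} together with Lemmas \ref{l.Q1} and \ref{l.R1}, the factor $t^{d-1}$ and the bookkeeping of (\ref{e.A1}) and Remark \ref{r.m} absorb all the negative powers of $t$ produced by differentiating $Q_1$, leaving $t^{d-1}\tfrac{\partial^n}{\partial r^n}\tfrac{\partial^{d-1}}{\partial x^{d-1}}[Q_1(1+R_1)]$ uniformly bounded on the domain. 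Since $e^{-\lambda^2/4t}\le1$, this forces $I_1^n\to0$ as $t\downarrow0$.

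For $I_2^n$ I would first record the analog of Proposition \ref{p.G}: a single $G_{n,d}\in L^1(\mathbb{R})$ dominating $t^{d-1}e^{-\lambda^2/4t}\tfrac{\partial^n}{\partial r^n}\tfrac{\partial^{d-1}}{\partial x^{d-1}}[Q_2(1+R_2)]$ uniformly for small $t$ and $(r,z)$ in the compact set, obtained by the same combination of Lemmas \ref{l.fr}, \ref{l.Q2}, \ref{l.R2} and the estimates (\ref{e.D1})--(\ref{e.D2}) that tame the competing exponentials $e^{\arccosh^2 x/4t-\lambda^2/4t}$. Granting this, dominated convergence passes the limit inside the integral (the indicator $1_{\cos(\sqrt{t}r)\cosh\lambda>1}\to1$ a.e.). Expanding $\tfrac{\partial^{d-1}}{\partial x^{d-1}}[Q_2(1+R_2)]$ by Leibniz, every term other than $Q_2^{(d-1)}$ carries a factor of $R_2$ or one of its derivatives, which by Lemma \ref{l.Rr} is $O(e^{-\pi^2/t})$; paired with the boundedness supplied by Proposition \ref{p.drq2} (the $k<d-1$ case for the $Q_2^{(d-1-k)}$ factors, $k\ge1$, and the $k=d-1$ case for $Q_2^{(d-1)}R_2$), all of these vanish in the limit. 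The surviving term $Q_2^{(d-1)}$ is handled by the $k=d-1$ case of Proposition \ref{p.drq2}, contributing $2(\lambda/2\sinh\lambda)^d\tfrac{\partial^n}{\partial r^n}e^{-r^2\lambda\coth\lambda/4}$ to the integrand. Reassembling the constants (with $e^{d^2t}, e^{tz^2/4}\to1$) then matches $2\tfrac{\partial^n}{\partial r^n}h_{1,d}(r,z)$, the factor $2$ coming precisely from the $2$ in Proposition \ref{p.drq2}.

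The hard part will be the uniform $L^1$-domination in the $I_2^n$ step. Unlike the $SU(2)$ case, the $d-1$ additional $x$-derivatives pile up through the Leibniz and chain-rule expansions, raising the orders of the $Q_2^{(k)}$'s and the powers of $1/t$, and one must verify through (\ref{e.A1}) and Remark \ref{r.m} that all resulting powers of $t$ stay nonnegative while the $e^{\arccosh^2 x/4t}$ growth remains beaten by $e^{-\lambda^2/4t}$ on the full range of $\lambda$. Once that bound is established, Proposition \ref{p.drq2} carries the essential analytic content and the rest reduces to the same bookkeeping as in Proposition \ref{p.dr}.
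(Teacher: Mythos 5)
Your proposal is correct and follows essentially the same route as the paper: insert \eqref{e.qs} into the integral representation, split at $\cos(\sqrt{t}r)\cosh\lambda=1$, kill $I_1^n$ by the measure of its domain, dominate $I_2^n$ by an integrable function via the generalization of Proposition \ref{p.G}, and then let Proposition \ref{p.drq2} together with the decay of $R_2$ and its derivatives isolate the single surviving term $Q_2^{(d-1)}$. The bookkeeping $t^{d+1}\cdot t^{-3/2}\cdot t^{-1/2}=t^{d-1}$ and the identification of the uniform $L^1$-domination as the remaining technical burden both match the paper's argument.
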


\begin{proof}
Again dealing with the derivatives of the Heisenberg kernel first, we note that equation (\ref{e.drh}) implies	
\[ \left|\left(\frac{\lambda}{\sinh\lambda }\right)^d\frac{\partial^n}{\partial r^n}e^{-r^2\lambda\coth\lambda/4}\right|
	\le C|\lambda|^{n+d}e^{-|\lambda|} \]
for all $r$ and sufficiently large $\lambda$. Thus, 
	\[\frac{\partial^n}{\partial r^n} h_{1,d}(r,z) 
	= \frac{1}{(4\pi)^{d+1}} \int_{-\infty}^\infty  e^{i \lambda z/2} 
		\left(\frac{\lambda}{\sinh\lambda }\right)^d\frac{\partial^n}{\partial r^n} e^{-r^2 \lambda \coth \lambda/4} d\lambda.\]

Now, let $K$ be a compact subset of $[0,\infty)\times\mathbb{R}$, and let $t>0$ be sufficiently small that $(\sqrt{t}r,tz)\in[0,\pi/4]\times[-\pi,\pi]$ for all $(r,z)\in K$.
Analogously to the proof of Proposition \ref{p.dr}, we may prove a generalization of Proposition \ref{p.G} and then use equation (\ref{e.qs}) to write
\begin{align*}
	t^{d+1}\frac{\partial^n}{\partial r^n}(p_{t,d}(\sqrt{t}r,tz))
	&= t^{d+1}\frac{e^{tz^2/4}}{\sqrt{4\pi t}}\int_\mathbb{R} e^{-\lambda^2/4t}e^{i\lambda z} \frac{\partial^n}{\partial r^n}q_{t,d}(\cos(\sqrt{t}r)\cosh\lambda)\,d\lambda \\
	&= t^{d-1}\frac{e^{d^2t}e^{tz^2/4}}{2(2\pi)^{d+1}}  
		 (I^n_{1,d}(t,r,z)+ I^n_{2,d}(t,r,z))   
\end{align*}
where
\begin{multline*}
	I_{2,d}^n(t,r,z):= \int_{\cos(\sqrt{t}r)\cosh\lambda>1} e^{-\lambda^2/4t}
		e^{i\lambda z/2} \\
	\times \frac{\partial^n}{\partial r^n}
	\Bigg\{\sum_{j=0}^{d-2} {d-1\choose j} Q_2^{(j)}(t,\cos(\sqrt{t}r)\cosh\lambda) R_2^{(d-1-j)}(t,\cos(\sqrt{t}r)\cosh\lambda) \\
	+ Q_2^{(d-1)}(t,\cos(\sqrt{t}r)\cosh\lambda))(1+ R_2(t,\cos(\sqrt{t}r)\cosh\lambda)\Bigg\} d\lambda 
\end{multline*}
	and $I_{1,d}^n$ is the companion integral over $\{\cos(\sqrt{t}r)\cosh\lambda<1\}$. 
	Again, we may show that $t^{d-1}e^{-\lambda^2/4t}\frac{\partial^n}{\partial r^n}Q_2^{(j)}R_2^{(d-1-j)}$ and $t^{d-1}e^{-\lambda^2/4t}\frac{\partial^n}{\partial r^n}Q_2^{(d-1)}(1+R_2)$ are uniformly bounded in $t$ by functions integrable in $\lambda$, and thus
we may move the limit in $t$ inside the integrand to yield the desired form. 
In particular, one may prove analogous estimates to Lemma \ref{l.Rr} to show that $\frac{\partial^m}{\partial r^m} R_2^{(\ell)}\rightarrow 0$ for $m,\ell\ge0$ as $t\downarrow 0$ and then use Proposition \ref{p.drq2} to show that for all $j=0,\ldots,d-2$
\begin{multline*}
	\lim_{t\downarrow0} t^{d-1}e^{-\lambda^2/4t}\\
		\times \frac{\partial^k}{\partial r^k}Q_2^{(j)}(t,(\cos(\sqrt{t}r)\cosh\lambda)) \frac{\partial^{n-k}}{\partial r^{n-k}}R_2^{(d-1-j)}(t,\cos(\sqrt{t}r)\cosh\lambda) = 0
\end{multline*}
for $k=0,\ldots,n$, and similarly these results imply that
	\begin{multline*}
	\lim_{t\downarrow0} t^{d-1}e^{-\lambda^2/4t} \frac{\partial^k}{\partial r^k}Q_2^{(d-1)}(t,(\cos(\sqrt{t}r)\cosh\lambda))\frac{\partial^{n-k}}{\partial r^{n-k}}(1+ R_2(t,\cos(\sqrt{t}r)\cosh\lambda) \\
		= \left\{ \begin{array}{ll} 0 & \text{if } k=0,\ldots,n-1 \\
		2\left(\frac{\lambda}{2\sinh\lambda}\right)^d \frac{\partial^n}{\partial r^n} e^{-r^2\lambda\coth\lambda/4} & \text{if } k=n \end{array}\right. .
\end{multline*}
The proof that $t^{d-1}I_{1,d}^n (t,r,z)\rightarrow0$ as $t\downarrow0$ works with similar (but easier) adaptations.
\end{proof}

\bibliographystyle{abbrv}

\end{document}